\newtheorem{proposition}{Proposition}[section]
\newtheorem{definition}[proposition]{Definition}
\newtheorem{theorem}[proposition]{Theorem}
\newtheorem{corollary}[proposition]{Corollary}
\newtheorem{lemma}[proposition]{Lemma}
\newtheorem*{cla}{Claim}
\newenvironment{poc}{\begin{proof}[Proof of claim]}{\end{proof}}
\numberwithin{equation}{section}
\newcommand{\var}{\operatorname{Var}}
\newcommand{\pr}[1]{\mathbb{P}\!\left(#1\right)}
\newcommand{\ex}[1]{\mathbb{E}\!\left[#1\right]}
\newcommand{\pru}[2]{\mathbb{P}_{#1}\!\left(#2\right)}
\newcommand{\exu}[2]{\mathbb{E}_{#1}\!\left[#2\right]}
\newcommand{\mbfs}{\operatorname{MBFS}}
\newcommand{\eps}{\ensuremath{\epsilon}}
\newcommand{\ca}{\ensuremath{\mathcal A}}
\newcommand{\cb}{\ensuremath{\mathcal B}}
\newcommand{\cc}{\ensuremath{\mathcal C}}
\newcommand{\cd}{\ensuremath{\mathcal D}}
\newcommand{\ce}{\ensuremath{\mathcal E}}
\newcommand{\cf}{\ensuremath{\mathcal F}}
\newcommand{\cg}{\ensuremath{\mathcal G}}
\newcommand{\ch}{\ensuremath{\mathcal H}}
\newcommand{\ci}{\ensuremath{\mathcal I}}
\newcommand{\cj}{\ensuremath{\mathcal J}}
\newcommand{\cl}{\ensuremath{\mathcal L}}
\newcommand{\cm}{\ensuremath{\mathcal M}}
\newcommand{\er}{Erd\H{o}s-R\'{e}nyi}
\newcommand{\epn}{\varepsilon_n}
\newcommand{\pepn}{\left(1+\BO{\varepsilon_n}\right)}
\newcommand{\mepn}{\left(1-\BO{\varepsilon_n}\right)}
\newcommand{\BO}[1]{O\!\left(#1\right)} 
\newcommand{\lo}[1]{o\!\left(#1\right)}
\title{Random Walk Hitting Times and Effective Resistance in Sparsely Connected Erd\H{o}s-R\'{e}nyi Random Graphs}
\author{John Sylvester
\medskip\\
  {\small University of Glasgow, Glasgow, United Kingdom}\\
  {\small \texttt{firstname.lastname@glasgow.ac.uk} }}
\providecommand{\key}[1]
{  {\small 	\textbf{Keywords: } #1}
}
\providecommand{\ams}[1]
{  {	\small \textbf{AMS Subject Classification: } #1}
}
\date{}
\begin{document}
\maketitle

\begin{abstract}
We prove a bound on the effective resistance $R(x,y)$ between two vertices $x,y$ of a connected graph which contains a suitably well-connected sub-graph. We apply this bound to the Erd\H{o}s-R\'{e}nyi random graph $\mathcal{G}\left(n,p\right)$ with $np=\Omega(\log n)$,  proving that $R(x,y)$ concentrates around $1/d(x) + 1/d(y)$, that is, the sum of reciprocal degrees. We also prove expectation and concentration results for the random walk hitting times, Kirchoff index, cover cost, and the random target time (Kemeny's constant) on $\mathcal{G}\left(n,p\right)$ in the sparsely connected regime $\log n + \log\log \log n \leq np < n^{1/10}$.       	
\end{abstract}
\key{Random Graph, Random Walk, Effective Resistance, Hitting Time, Kirchoff Index}\\
\ams{Primary 05C80, 05C81, 60C05; Secondary 05C40, 60J85, 60J45.}

\section{Introduction \& Results} 
The effective resistance $R(x,y)$ between two vertices $x,y$ of a graph $G=(V,E)$ is the energy dissipated by a unit current flow from $x$ to $y$ when all edges have unit resistances. That is 
\begin{equation}\label{resistance}R(x,y) =\inf_{\theta} \left\{\sum_{e\in E}\theta(e)^2 : \theta \text{ is a unit flow from $x$ to $y$}\right\},  \end{equation}see Section \ref{prelim} for a complete mathematical formulation. The effective resistance has connections to Markov chain theory, in particular for infinite graphs the transience or recurrence of a random walk is determined by the resistance from the origin to cut sets at arbitrary distance from the origin \cite{doyle1984random}. In finite graphs the resistances determine hitting times of random walks \cite{tetali1991random} and are related to the eigenvalues of the Laplacian \cite{lovasz1993random}.

We prove a new bound on effective resistance for graphs $G$ containing a sub-graph $H$ with good connectivity properties. The result, Theorem \ref{resbdd}, may be stated (very) informally as 
\[ R(x,y)\leq \frac{1}{d(x)} + \frac{1}{d(y)} + \mathrm{Error}_{x,y}(G,H), \] 
where $d(\cdot )$ is the degrees of a vertex and $\mathrm{Error}_{x,y}(G,H)$ is an error term. Depending on the graph $G$ and the sub-graph $H$ chosen the error may be insignificant compared to the other terms. In that case our bound is essentially tight as $1/(d(x)+1) + 1/(d(y)+1)$ is always a lower bound on $R(x,y)$, see Section \ref{ressec} for a full statement of Theorem \ref{resbdd}. Although this bound holds for any connected graph, the $\mathrm{Error}_{x,y}(G,H)$ term may dominate; our bound works well for graphs with strong expansion properties. We apply this bound to the random graph $\mathcal{G}\sim \mathcal{G}(n,p)$, that is the simple graph $\cg$ on $n$ vertices with law $\cg(n,p)$ given by sampling each edge independently with probability $p$. The random graph $\cg(n,p)$ has been extensively studied \cite{bollobasrandom,Frbook,JSL} and so it is a natural question to determine the effective resistance for such a fundamental graph distribution. We remark that throughout all $\log$'s are base $e$ and we define $\epn:=\epn(n,p)$ to be the function
\begin{equation}\label{edef}
\varepsilon_n : = \frac{\log  n}{ np\log (np)}.
\end{equation}

\begin{theorem}\label{resconc}For any $c>0$ let $\mathcal{G}\sim  \mathcal{G}(n,p)$ with $c \log n\leq np  \leq n^{1/10}$.  Then for a fixed $ i, j\in V$ where $i\neq j$, 
	\[\pr{\left|R(i,j) - \left(\!\frac{1}{{d}(i)} + \frac{1}{{d}(j)}\!\right)\!\right| > 9\frac{2+\epn{d}(i)}{{d}(i)^2} + 9\frac{2+\epn{d}(j)}{{d}(j)^2} }=o\!\left(e^{-np/4} \right)+ o\!\left(n^{-3}\right) .\]
\end{theorem}
 Notice if $np = \Omega(\log n )$ then $\epn = \lo{1}$. Theorem \ref{resconc} shows that with high probability (w.h.p.) the main contribution to the effective resistance $R(i,j)$ between vertices $i,j \in V$ comes from the flow through edges connecting $i$ and $j$ to their immediate neighbours.

 From the definition \eqref{resistance} of $R(x,y)$ one observes that the contribution to the resistance from each edge in the graph is quadratic in the amount of flow passing through that edge. The main idea of Theorem \ref{resbdd} is to show that if a graph contains a sub-graph from a certain family well connected graphs then there are many paths between the neighbours of $x$ and $y$ which become edge disjoint away from $x$ and $y$. If this is the case then the flow can be divided up evenly between the edges close to $x$ and $y$, further away we use the edge disjoint paths to carry the flow. In a graph with good expansion there should be many paths such paths and so the flow through these edges should be negligible compared to that through edges close to $x$ and $y$. However since we balanced the flow evenly over edges close to $x$ and $y$ the contribution to $R(x,y)$ from these edges is close to optimal and matches a simple lower bound up to lower order terms. Aside from $\cg(n,p)$ our bound on resistance, Theorem \ref{resbdd}, may potentially be applied to other random graph models such as binomial random intersection graphs \cite[\S 11]{Frbook} and Chung-Lu graphs \cite{Chung} in certain regimes. These regimes where this bound may be effective are those where there is constant minimum degree, the average degrees is large, and it is hard to get good enough control on the spectral statistics to apply spectral methods to obtain estimates on the resistances or hitting times with the correct leading constant.

We also consider expected hitting times $h(i,j)$ of a random walk. Let $\mathbf{P}^G_i$ be the law of a simple random walk (SRW) $X_t$ on $G$, that is the random process which at each step moves to a uniformly chosen neighbour of the current vertex, then $ h(i,j) :=\mathbf{E}^G\left[ \tau_j \mid X_0=i \right]$ where $\tau_j:=\inf \left\{t:X_t=j\right\}$. Hitting times are well studied in Markov chain theory \cite{aldousfill,peresmix}. They also feature in randomised algorithms, for example the run time of the original $\mathsf{LOGSPACE}$ algorithm for undirected complexity \cite{lovasz1993random}, and are a popular tool in machine learning to analyze the structure of graphs \cite{von2010hitting}. Tetali's formula \cite{tetali1991random} relates hitting times to resistances:
\begin{align}
h(i, j) &=  |E(G)|\cdot R(i, j) + \sum\limits_{u\in V} \frac{{d}(u)}{2}\left[R(j,u)-R(u, i)\right].\label{eq:hit}
\end{align}Using Tetali's formula we derive results for hitting times and related qualities via controlling resistances. We shall focus on the following regime for $\cg(n,p)$ which we call sparsely connected: 
\begin{equation} \label{sparsecon}\log n + \log\log \log n \leq np  \leq n^{1/10}. \end{equation}
Recall that $\cg(n,p)$ has average degree $np$, however at the lower end of the range \eqref{sparsecon} it also has vertices of constant degree w.h.p. so, at the lower end, $\cg(n,p)$ is far from being regular. 

Let $\mathcal{C}:=\cc_n$ be the event $\mathcal{G}$  is connected and $\ex{\cdot \big|{\mathcal{C}}} $ be conditional expectation w.r.t. $\mathcal{G}(n,p)$. 
\begin{theorem} \label{exthm}
	Let $\mathcal{G}\sim  \mathcal{G}(n,p)$ satisfy \eqref{sparsecon}. Then for any $i,j\in V(\mathcal{G})$ where $i \neq j$,
	\[\ex{R(i,j)\big|{\mathcal{C}}} =   \frac{2\pm \BO{\epn} }{np} \quad \qquad \text{and} \qquad \quad \ex{h(i,j)\big|{\mathcal{C}}} =  n\, (1\pm \BO{\epn}). \] \end{theorem}
We obtain concentration for resistances and hitting times from Theorem \ref{resconc}.

\begin{theorem}\label{resconc2}For any $c>0$ let $\mathcal{G}\sim  \mathcal{G}(n,p)$ with $np = (c\pm \lo{1}) \log n$. Then for fixed $ i\neq j\in V$,
	\begin{enumerate}[(i)]
		\item \label{itm:co2} $\displaystyle{\pr{\left|R(i,j) - \frac{2}{np} \right| > \frac{10 }{c^{2}\log( n) \log\log( n) }} \leq e^{-\Omega\left(\frac{\log n }{(\log\log n)^2} \right)}.}$
	\end{enumerate}Further, if $np = \omega(\log n)$ and $np\leq n^{1/10}$ then
	\begin{enumerate}[(i)]\setcounter{enumi}{1}
		\item \label{itm:co3}  $\displaystyle{\pr{\sup\limits_{\{i,j\}\subseteq V}\left|R(i,j) - \frac{2}{np}\right|\! > \!\frac{7\sqrt{\log n}}{(np)^{3/2}}}= o\left(\frac{1}{n}\right)}$
		\item \label{itm:co4} $\displaystyle{\pr{\sup\limits_{\{i,j\}\subseteq V} \left|h(i,j) - n  \right| > 12n \sqrt{\frac{\log n}{np}} } = o\left(\frac{1}{n}\right)}.$\end{enumerate}
\end{theorem}
Observe Theorem \ref{resconc2} \eqref{itm:co4} gives concentration of $h(i,j)$ around $n$ for all pairs $i,j\in V$ when $np=\omega(\log n)$. For $np= \Theta(\log n )$ we prove concentration by the second moment method.  
\begin{theorem}\label{concentrationnew} Let $\mathcal{G}\sim  \mathcal{G}(n,p)$ satisfy \eqref{sparsecon}, $f(n):\mathbb{N}\rightarrow \mathbb{R}_+$. Then for a fixed $i,j \in V, i \neq j$,
	\[\pr{\Big|h(i,j)  -    n \Big| > n\sqrt{ f(n)\cdot \epn}  \;}= O\left(\frac{1}{f(n)}\right).\] 
\end{theorem}

In particular by choosing $f(n)=\log \log (np)$ above we have concentration for a fixed pair $i,j \in V$, not all pairs;  the following proposition shows that this is best possible.

\begin{proposition}\label{tight}Let $\cg \sim \cg(n,p)$. If $np = \log n + 100\log\log\log n $, then w.h.p. there exists $i,j \in V$ such that $R(i,j)\geq 1$ and $h(i,j) > n\log(n)/3$. For any $1<c<\infty $, if $np = c\log(n)$ then there is an $a>0$ and $i,j \in V$ such that w.h.p. $R(i,j)\geq (2+a)/np $ and $ h(i,j) > (1+a)n$.
\end{proposition}
Theorems \ref{exthm}-\ref{concentrationnew} are valid only for $np  \leq n^{1/10}$, however concentration and expectation for all of the aforementioned random variables has been determined for $np$ above this range. The original contribution of this paper is determining expectation and concentration close to the connectivity threshold $np=\log n$, see the literature review in Section \ref{bkgrd} for more details.

One consequence of applying Theorem \ref{resbdd} to $\cg(n,p)$ is that we can also show that there are many ways to select a edge-disjoint paths between the first neighbours of a pair of vertices. In particular for a graph $G$ let $paths_2(i,j,l)$ be the maximum number of paths of length at most $l$ between vertices $i$ and $j$ of $G$ that are vertex disjoint on $V\backslash \left(B_1(i)\cup B_1(j)\right)$.

\begin{theorem}\label{bolthom}Let $\mathcal{G}\sim  \mathcal{G}(n,p)$ where for any $c>0$, $c\log n \leq np \leq n^{1/10}$. Let $l:=\log n/\log (np) + 9$. Then for $i,j \in V$ where $ i\neq j$,
	\begin{enumerate}[(i)]
		\item \label{itm:bolthom1} $\displaystyle{\pr{paths_2(i,j,l) \neq  \min\{{d}_2(i),{d}_2(j) \}}\leq 5n^3p^4 + o\left(e^{-7\min \left\{np,\log n \right\}/2}\right)}$,
		\item \label{itm:bolthom2} $\displaystyle{\pr{\left|paths_2(i,j,l) -(np)^2\right|> 3(np)^{3/2}\sqrt{\log np}} = o\left(1/np\right)}$.
\end{enumerate}\end{theorem}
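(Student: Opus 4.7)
My plan is to deduce Theorem~\ref{bolthom} by combining the strong $k$-path property (Lemma~\ref{setlem}), which supplies many vertex-disjoint short paths between prescribed pairs of sets, with classical concentration for the second-neighbourhood sizes $\gamma_2(i),\gamma_2(j)$ in $\cg(n,p)$.

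For the upper bound in (\ref{itm:bolthom1}) I would work on the locally nice event $\cn=\cn(i,j)$ that $\Gamma_1(i)\cap\Gamma_1(j)=\emptyset$, that there is no edge between $\Gamma_1(i)$ and $\Gamma_1(j)$, and that $\Gamma_2(i)\cap B_1(j)=\Gamma_2(j)\cap B_1(i)=\emptyset$. Each forbidden configuration is built from at most three or four prescribed edges on four specified vertices, so a first-moment estimate gives $\pr{\cn^c}=O(n^3p^4)$. On $\cn$, every $i$-to-$j$ path of length $\geq 2$ must exit $B_1(i)$ through a vertex of $\Gamma_2(i)\subseteq V\setminus(B_1(i)\cup B_1(j))$ (the first vertex of the path outside $B_1(i)$) and enter $B_1(j)$ through a vertex of $\Gamma_2(j)\subseteq V\setminus(B_1(i)\cup B_1(j))$; distinct paths in any family that is vertex-disjoint on $V\setminus(B_1(i)\cup B_1(j))$ must use distinct such exit and entry vertices, yielding $paths_2(i,j,l)\leq \min\{\gamma_2(i),\gamma_2(j)\}$. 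For the matching lower bound I would apply the strong $k$-path property to the sets $\Gamma_2(i)$ and $\Gamma_2(j)$ inside the subgraph induced on $V\setminus(B_1(i)\cup B_1(j))$, outside a further exceptional event of probability $o(e^{-7\min\{np,\log n\}/2})$ on which these sets are too small or too poorly expanding for Lemma~\ref{setlem} to apply; this gives $\min\{\gamma_2(i),\gamma_2(j)\}$ vertex-disjoint paths of length at most $l-4$ between $\Gamma_2(i)$ and $\Gamma_2(j)$, and each such path extends to an $i$-$j$ path of length at most $l$ by prepending $i\to\Gamma_1(i)\to\Gamma_2(i)$ (available by definition of $\Gamma_2(i)$) and appending the symmetric segment at $j$. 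The added vertices lie in $B_1(i)\cup B_1(j)$, so they do not disturb vertex-disjointness in the required sense.

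Part~(\ref{itm:bolthom2}) then reduces to concentration of $\min\{\gamma_2(i),\gamma_2(j)\}$ around $(np)^2$. Revealing $\Gamma_1(i)$ first and using $\gamma_2(i)\sim\mathrm{Bin}(n-1-m,\,1-(1-p)^m)$ conditional on $|\Gamma_1(i)|=m$, the expansion $1-(1-p)^m = mp(1+O(mp))$ (valid since $mp\leq np^2\to 0$ in our regime) gives conditional mean $(1+o(1))\,nmp$. The law of total variance yields an overall variance of order $(np)^3$, dominated by the fluctuations of $m$ around $np$, which matches the scale of the theorem's deviation. A Chernoff/Bernstein bound applied separately to the outer Binomial in $m$ and to the inner Binomial then gives $|\gamma_2(i)-(np)^2|\leq \tfrac{3}{2}(np)^{3/2}\sqrt{\log(np)}$ with probability $1-o(1/np)$; a union bound over $\{i,j\}$ combined with part~(\ref{itm:bolthom1}) completes (\ref{itm:bolthom2}).

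The step I expect to cause the most trouble is the lower bound in (\ref{itm:bolthom1}): converting the set-to-set output of Lemma~\ref{setlem} into an actual family of $\min\{\gamma_2(i),\gamma_2(j)\}$ vertex-disjoint $i$-$j$ paths with the correct length budget and compatible extensions through $B_1(i)\cup B_1(j)$. Bookkeeping the length $l-4$ through the application of the strong $k$-path property, and guaranteeing that the exceptional event on which the required expansion fails is covered by $o(e^{-7\min\{np,\log n\}/2})$, is the technical core of the argument; everything else should reduce to routine first- and second-moment estimates together with the standard Menger-type cut argument sketched above.
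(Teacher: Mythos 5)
Your upper bound $paths_2(i,j,l)\le\min\{\gamma_2(i),\gamma_2(j)\}$ on a locally-sparse event is the same Menger-type cut argument the paper uses (its event $\ce_{i,j}$, ``no $i$--$j$ path of length $<4$,'' plays the role of your $\cn$, and you should also forbid $ij\in E$). But your first-moment estimate $\pr{\cn^c}=O(n^3p^4)$ is off by a factor of $np$: the costliest forbidden configuration is a single edge between $\Gamma_1(i)$ and $\Gamma_1(j)$, three prescribed edges on four vertices two of which are free, so the correct estimate is $O(n^2p^3)$ (this is exactly the paper's bound on $\pr{\ce_{i,j}^c}$). In other words, this event is \emph{not} where the theorem's $5n^3p^4$ comes from.

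The genuine gap is in the lower bound. The strong $k$-path property (Definition~\ref{A}) and Lemma~\ref{setlem} are statements about the \emph{pruned} second neighbourhoods $\Psi_2(u),\Psi_2(v)$ produced by the MBFS algorithm, not about $\Gamma_2(i),\Gamma_2(j)$. You cannot ``apply the strong $k$-path property to the sets $\Gamma_2(i),\Gamma_2(j)$'': the construction behind Lemma~\ref{resbdd} grows disjoint trees $T_k(x)$ from the points $x\in\Psi_2$, and their disjointness relies precisely on pruning clashes and low-degree vertices at each generation, which is what forces the count down from $\gamma_2$ to $\psi_2$. What Lemma~\ref{setlem} actually gives you, on the favourable event $\ca_{i,j}^{n,k}$, is $paths_2(i,j,l)\ge\min\{\psi_2(i),\psi_2(j)\}$; passing from $\psi_2$ to $\gamma_2$ requires Lemma~\ref{sdom}~\eqref{itm:d4}, which costs $4n^3p^4+e^{-(1-o(1))np}+O(n^2p^3)$. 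That $4n^3p^4$ dominates $e^{-7\min\{np,\log n\}/2}$ throughout the stated regime (for $np=\log n$ one has $n^3p^4\approx(\log n)^4/n\gg n^{-7/2}$), so your claim that the lower-bound exceptional event has probability $o(e^{-7\min\{np,\log n\}/2})$ is simply false, and this polynomial term is the actual source of the theorem's leading $5n^3p^4$. Your sketch for Item~\eqref{itm:bolthom2} is essentially the paper's argument (condition on $\gamma_1$, use the conditional binomial law for $\gamma_2$, and Chernoff).
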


We also prove results for some other related indices which appear in the literature for $\cg(n,p)$. For a discussion of how our results extend previous work see Section \ref{bkgrd}.

Let $\pi(v) = d(v)/2|E| $ for $v \in V$ be the stationary distribution of the SRW on $G$ and define,
\begin{equation}\label{eq:kem}
H_j(G) := \sum\limits_{i \in V} \pi(i) h(i,j)\text{ for }j \in V,\qquad  T(G) := \sum\limits_{j \in V} \pi(j) h(i,j).
\end{equation}
The index $H_j(G)$ is known as the stationary hitting time to $j$ \cite{lowe2013hitting} and $T(G)$ is the random target time or Kemeny's constant \cite{aldousfill,peresmix}. Note that $T(G)$ is independent of the vertex $i$ in \eqref{eq:kem}, see \cite[Eq. 3.3]{lovasz1993random}, and the expected running time of Wilson's algorithm  \cite{wilson1996generating} on $G$ is $O(T(G))$.

\begin{theorem} \label{exthm2}
	Let $\mathcal{G}\sim  \mathcal{G}(n,p)$ satisfy \eqref{sparsecon}. Then for any $i\in V(\mathcal{G})$,
	\[\ex{H_i(\mathcal{G})\big|{\mathcal{C}}}  =n\, (1\pm \BO{\epn}) \quad \qquad \text{and} \qquad \quad \ex{T(\mathcal{G})\big|{\mathcal{C}}} =n\, (1\pm \BO{\epn}). \] 
\end{theorem}
 The Kirchoff index $K(G)$ and cover cost $cc_i(G)$ of a finite connected graph $G$ are defined by
\begin{equation}\label{Kirchoff}
K(G):=  \sum\limits_{\left\{i,j\right\}\subseteq  V}R(i,j), \qquad \text{ and }\qquad  cc_i(G) := \frac{1}{n-1} \sum\limits_{j \in V} h(i,j) \quad \text{for }i \in V.
\end{equation}The former is studied in the contexts of mathematical chemistry \cite{Ran} and sensor networks \cite{boumal2014concentration}, and the latter was introduced to bound the cover time \cite{cc,AgW}. By linearity of expectation: 
\begin{corollary}[of Theorem \ref{exthm}] \label{extcor}
	Let $\mathcal{G}\sim  \mathcal{G}(n,p)$ satisfy \eqref{sparsecon}. Then for any $i\in V(\mathcal{G})$,
	\[\ex{K(\mathcal{G})\big|{\mathcal{C}}}= \frac{n}{p} (1\pm \BO{\epn})\quad \qquad \text{and} \qquad \quad \ex{cc_i(\mathcal{G})\big|{\mathcal{C}}}=n\, (1\pm \BO{\epn}). \]
\end{corollary}

We prove concentration for these random variables on $\cg(n,p)$ by the  second moment method.  
\begin{theorem}\label{concentration}  Let $\mathcal{G}\sim  \mathcal{G}(n,p)$ satisfy \eqref{sparsecon} and let $f(n):\mathbb{N}\rightarrow \mathbb{R}_+$. Fix $i \in V$ and let $X $ be any of the random variables $K(\cg), \, H_i(\cg) , \,T(\cg),\, cc_i(\cg)$. Then
	\[\pr{\Big|X  -    \ex{X\big|\cc} \Big| > \ex{X\big|\cc}\sqrt{f(n)\cdot \epn }  \;}= O\left(\frac{1}{f(n)}\right).\] 
\end{theorem}

\subsection{Outline of the Paper}
Section \ref{ressec} contains our bounds on the effective resistance. In particular, in Subsection \ref{genbdd} we prove a general bound, Theorem \ref{resbdd}, which is based on the existence and structure of a desirable sub-graph $H$. In Subsection \ref{apppg} we describe a specialisation of this bound based on a specific family of sub-graphs defined by an exploration process which is well suited to $\cg(n,p)$. In Section \ref{strgnp} we prove preliminary results regarding the sub-graph of $\cg(n,p )$ described in Section \ref{apppg}, these results are needed to apply our bounds on effective resistance to $\cg(n,p)$. In Section \ref{ResThmSec} we apply the results of Sections \ref{ressec} and \ref{strgnp} to prove Theorems \ref{resconc} and \ref{resconc2}, which determine resistances in $\cg(n,p)$, and Theorem \ref{bolthom}, which concerns paths between second neighbours in $\cg(n,p)$. Finally in Section \ref{FinalSec} we combine results from the previous three sections to prove Theorems \ref{exthm}, \ref{concentrationnew}, \ref{exthm2} \& \ref{concentration} which are results on the expectation and concentration for hitting times of random walks and related indices on $\cg(n,p)$. In the remainder of this section we shall discuss some related work and how our work extends known results and also cover some preliminary material.

\subsection{Related Work}\label{bkgrd}

In \cite{jonasson1998cover} Jonasson studies the cover time, that is the expected time to visit all vertices from the worst starting vertex, for $\mathcal{G}(n,p)$. He bounds the cover time by showing effective resistances and hitting times on $\mathcal{G}(n,p)$ concentrate in the regimes where $\omega(\log n)=np\leq n^{1/3}$. Jonasson does not use spectral methods and instead bounds the effective resistance by finding a suitable flow. This is the approach we have also taken, using a refined analysis we extend Jonasson's results for hitting times to the range \eqref{sparsecon} and for effective resistance to the case $np=\Omega(\log n)$. It is worth noting that the cover time has since been determined for all connected $\cg(n,p)$ by Cooper \& Frieze \cite{CFcov} using the first visit Lemma and mixing time estimates.

Let $L=D-A$ be the graph Laplacian, where $A$ is the adjacency matrix and $D$ is the diagonal matrix with $D_{i,j}={d}(i)$ if $i=j$ and $D_{i,j}=0$ otherwise \cite{Ran,lovasz1993random}. Many previous results rely on exploiting connections between resistances or hitting times and spectral statistics of $L$ or other representations of the graph. In this paper we do not employ spectral methods; the results we achieve hold for $\mathcal{G}(n, p)$ close to the connectivity threshold where the minimum degree is $1$ w.h.p.\ and it is hard to obtain good enough estimates on the relevant spectral statistics.

Boumal \& Cheng \cite{boumal2014concentration} exploit an expression for the Kirchoff index $K(G)$ in terms of the trace of $L^{\dagger}(G)$, the Moore-Penrose pseudoinverse of $L(G)$ \cite{Ran}, to obtain expectation and concentration for $K(\mathcal{G})$ on $\mathcal{G}(n,p)$ with $np= \omega\left((\log n)^6\right)$. We will now outline a related expression for $K(G)$ and explain how this can also be used with spectral statistics to control $K(G)$. Let $\lambda_i$ be the eigenvalues of $L(G)$, where $G$ is a finite connected graph. Then by the matrix tree theorem \cite{AgW}:
\begin{equation}\label{Klap} K(G) = \sum\limits_{\lambda_i \neq 0} \frac{1}{\lambda_i}. \end{equation}
A theorem of Coja-Oghlan, \cite[Theorem 1.3]{coja2007laplacian}, states that if $\mathcal{G} \sim  \mathcal{G}(n,p)$ with $np \geq C_0\log n$ for sufficiently large $C_0$ the non-zero eigenvalues of $L(\mathcal{G})$ concentrate around the mean. Combining these estimates with \eqref{Klap} yields concentration for $K(\mathcal{G})$ and with extra work the leading order term of $\ex{K(\mathcal{G})\big|{\mathcal{C}}}$ can be determined when $np \geq C_0\log n$. It is of note however that Boumal \& Cheng obtain second order terms for $\ex{K(\mathcal{G})\big|{\mathcal{C}}}$, which is not possible with the latter method. Theorems \ref{exthm} and \ref{concentration} give expectation and concentration for $K(\cg)$ in the range \eqref{sparsecon}.

L{\"o}we \& Torres \cite{lowe2013hitting} obtain concentration results for $T(\mathcal{G}), H_i(\mathcal{G})$ and also the commute time $\kappa(i,j) = h(i,j)+h(j,i)$ on $\mathcal{G}(n,p)$. Again, the result comes from using expressions for these quantities in terms of the eigenvectors and eigenvalues of the transition matrix of the simple random walk, these expressions can be found in \cite{lovasz1993random}. L{\"o}we \& Torres then apply results from Erd\H{o}s et. al. \cite{erdHos2012spectral} to bound from above the reciprocal of the spectral gap. L{\"o}we \& Torres require $np = \omega\left((\log n)^{C_0}\right)$ for some $C_0>0$ sufficiently large as this is needed to apply the results in \cite{erdHos2012spectral}. Theorems \ref{exthm}, \ref{exthm2},  \ref{concentration} and \ref{concentrationnew} extend these results to the range \eqref{sparsecon}.

Von Luxburg, Radl  \& Hein \cite[Theorem 5]{von2010hitting} prove bounds on the difference of $h(i,j)/2|E|$ from $1/{d}(i)$ for non bipartite graphs by the reciprocal of the spectral gap and the minimum degree of $G$. They then apply these to various geometric random graphs. These bounds give the same result as Theorem \ref{resconc} \eqref{itm:co4} when applied to $\cg(n,p)$ with $np=\omega(\log n)$, however if $np= \BO{\log n}$ they will only give the hitting times up-to a constant. Theorem \ref{concentrationnew} provides concentration results for $h(i,j)$ recovering the leading constant in the extended range \eqref{sparsecon}.

On a different note, Bollob\'{a}s \& Thomason \cite[Theorem 7.4]{bollobasrandom} showed the threshold for having minimum degree $k(n)$ coincides with the threshold for having at least $k(n)$ vertex-disjoint paths between any two points. Theorem \ref{bolthom} can be thought of as a ``local first neighbourhood relaxation'' of this statement for two vertices as it roughly states that if you want to separate two vertices $x$ and $y$ and your not allowed to use any vertices from either $x$ or $y$'s first neighbourhoods then w.h.p. the next best option take the smaller of $x$ or $y$'s second neighbourhoods as a separator. Broder, Frieze, Suen \& Upfal \cite{BFSU} show that there are edge disjoint paths between any two sets of vertices in $\cg(n,p)$, provided that the sets are not too large and provide a polynomial time algorithm to find them. The restrictions on the sets are very modest however their results do not give bounds on the length of the paths found or exact bounds on their number.

\subsection{Futher Preliminaries}\label{prelim}
We use $X\sim \mathcal{L}$ to denote the random variable $X$ having law $\mathcal{L}$. For random variables $A,B$, we say that $B$ dominates $A$ if $\mathbb{P}\left[A>x\right]\leq \mathbb{P}\left[B>x\right]$ for every $x$ and we use the notation $B \succeq_1 A$, or $A\preceq_1 B$ in this case. If $A\preceq_1 B$ and $A,B\geq 0$ then $\mathbb{E}\left[ A^\alpha\right]\leq \mathbb{E}\left[ B^\alpha\right]$ for any $\alpha \geq 1$. Let $\text{Bin}(n,p)$ denote the binomial distribution over $n$ trials each of probability $p$. Some additional probabilistic notions and lemmas may be found in Appendix \ref{appen}.

Throughout we will be working on a finite simple connected graph $G=(V,E)$ with $|V|=n$ and $|E|=:m$. Let $d(i,j)$ be the graph distance between $i,j \in V$ and define the following \begin{equation}\label{nebhs}
\Gamma_{G,k}(i):= \left\{j \in V: d(i,j)=k \right\},\qquad {d}_{G,k}(i):= \left|\Gamma_k(i)\right|, \qquad B_{G,k}(i) := \bigcup\limits_{h=0}^{k}\Gamma_h(i),  
\end{equation}which are the $k^{th}$ neighbourhood of $i$, size of $k^{th}$ neighbourhood and the ball of radius $k$ centred at $i$ respectively. We drop the $G$ from the subscripts in \eqref{nebhs} when the graph is clear, and the subscript $1$ when referring to first neighbourhoods i.e. $\Gamma(x):=\Gamma_{1}(x)$ and  $d(x):=|\Gamma(x)| $.

The hitting times $h(i,j)$ can be far from symmetric, see the example of the lollipop graph \cite{lovasz1993random}. The commute time $\kappa(i,j)$ is the expected number of steps for a random walk from $i$ to reach $j$ and return back to $i$. The commute time $\kappa(i,j)$ is symmetric and related to hitting times and effective resistances by the commute time formula \cite{chandra1996electrical}
\begin{equation}\label{commute} \kappa(i,j):=h(i,j) + h(j,i) = 2 m\cdot  R(i,j). \end{equation}

\subsubsection*{\er Graphs} The Erd\H{o}s-R\'{e}nyi or Binomial random graph model $\mathcal{G}(n,p)$ is a probability distribution over simple $n$ vertex graphs. Any given $n$ vertex graph $G=(V,E)$ is sampled with probability
\[\pr{\mathcal{G}=G} = p^{|E(G)|}(1-p)^{\binom{n}{2}-|E(G)|}. \]This $\mathbb{P}$ is the product measure over edges of the complete graph $K_n$ where each edge occurs as an i.i.d.\ Bernoulli random variable with probability $0<p:=p(n)<1$. Throughout $\mathbb{E}$ will denote expectation with respect to $\mathbb{P}$. Another feature of \er graphs worth mentioning is that for each $u \in V$ the degree of $u$ is binomially distributed ${d}(u) \sim Bin(n-1,p)$ and the degrees are not independent. This model has received near constant attention in the literature since the original $\cg(n,m)$ model was studied by Erd\H{o}s \& R\'{e}nyi \cite{erdos1959random}. For more information consult one of the many books on random graphs \cite{bollobasrandom,Frbook,JSL}.

Observe that the effective resistance becomes a random variable when the graph is drawn from $\mathcal{G}(n,p)$. Since the effective resistance between two disconnected vertices is infinite we shall need to condition on the event $\mathcal{C}:=\mathcal{C}_n$ that $\mathcal{G}$ is connected. Let $\pru{\mathcal{C}}{\cdot} := \mathbb{P}\left(\cdot \left.\right|\mathcal{C}\right)$ and $\mathbb{E}_{\mathcal{C}}:= \mathbb{E}\left[\cdot \left.\right|\mathcal{C}\right]$ be the expectation with respect to $\mathbb{P}_{\mathcal{C}}$. The following theorem gives a bound on being disconnected above the $np=\log n$ connectivity threshold. 

\begin{theorem}[{\cite[Th.\,9,\,\S VII]{bollobas1998modern}}]
	Let $\mathcal{G}\sim  \mathcal{G}(n,p), \,np = \log n+\omega(n)$ where $\omega(n) \rightarrow  \infty$. Then 
	\begin{equation}\label{eq:probC}\pr{\mathcal{C}^c} \leq 4\cdot e^{-\omega( n)} .\end{equation}
\end{theorem}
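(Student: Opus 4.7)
The plan is to prove the bound by a first-moment estimate on connected components, dominated by isolated vertices. Since $\cg$ is disconnected exactly when it has a connected component of size at most $\lfloor n/2\rfloor$, write $Y_k$ for the number of components of size exactly $k$ and apply a union bound:
\[
\pr{\cc^c} \leq \sum_{k=1}^{\lfloor n/2\rfloor} \Ex[Y_k].
\]
The key observation driving the constant $4$ is that with $np = \log n+\omega(n)$ the $k=1$ term dominates and the $k\geq 2$ contribution is exponentially smaller.

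For $k=1$ the estimate is direct: $\Ex[Y_1]=n(1-p)^{n-1}\leq n\,e^{-p(n-1)}=\exp\!\bigl(\log n-(n-1)p\bigr)=\exp(-\omega(n)+p)$. Since $np=O(\log n+\omega(n))$ forces $p=o(1)$ in the relevant range, we obtain $\Ex[Y_1]\leq (1+o(1))e^{-\omega(n)}$, which for large $n$ is at most, say, $2e^{-\omega(n)}$.

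For $k\geq 2$ I would use the fact that any connected component on $k$ vertices contains a spanning tree, and apply Cayley's formula to count: there are $k^{k-2}$ labelled trees on $k$ vertices, each present with probability $p^{k-1}$, and the component condition forces $(1-p)^{k(n-k)}$ absence of outgoing edges. Thus
\[
\Ex[Y_k]\leq \binom{n}{k}k^{k-2}p^{k-1}(1-p)^{k(n-k)}\leq \tfrac{1}{k^2p}\bigl(e\,np\,e^{-p(n-k)}\bigr)^{k},
\]
after using $\binom{n}{k}\leq (ne/k)^k$ and $(1-p)\leq e^{-p}$. For $k\leq n/2$ we have $p(n-k)\geq pn/2=(\log n+\omega(n))/2$, so the factor inside the $k$-th power is at most $e\,np\,n^{-1/2}e^{-\omega(n)/2}$, which tends to $0$. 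The series $\sum_{k\geq 2}\Ex[Y_k]$ is therefore dominated by its $k=2$ term, which is $O(n^2 p\cdot e^{-pn})=O(e^{-\omega(n)}\log n/n)=o(e^{-\omega(n)})$. Adding the two ranges yields $\pr{\cc^c}\leq 4\,e^{-\omega(n)}$ for $n$ large enough, absorbing lower-order corrections into the constant.

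The main technical obstacle is ensuring the explicit constant $4$: the small-$k$ range (say $2\leq k\leq \log n$) needs Cayley-formula bookkeeping and a careful comparison to $\Ex[Y_1]$, while the moderate-$k$ range ($\log n<k\leq n/2$) must be shown to decay super-polynomially via the $(1-p)^{k(n-k)}$ factor so that the sum contributes negligibly beside $\Ex[Y_1]$. Once both ranges are shown to be $o(e^{-\omega(n)})$, the $4$ comfortably absorbs them together with the $(1+o(1))$ slack from the isolated-vertex estimate.
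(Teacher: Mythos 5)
The paper does not prove this statement itself; it is cited from Bollob\'as, \emph{Modern Graph Theory}, Theorem 9 of Ch.\ VII.3, so there is no ``paper's proof'' to compare against. Your first-moment decomposition over component sizes is the standard route, and the isolated-vertex term is handled correctly: $\Ex[Y_1]\leq n e^{-(n-1)p}=e^{p-\omega(n)}$, which is at most $e\cdot e^{-\omega(n)}<3e^{-\omega(n)}$. (Note that $p=o(1)$ is not automatic --- $\omega(n)$ may be as large as $n-\log n$, forcing $p\to 1$; but $e^{p}\leq e<4$ always suffices, so the isolated-vertex term never exceeds $3e^{-\omega(n)}$.)

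The gap is in the $k\geq 2$ range. After the step $\Ex[Y_k]\leq\frac{1}{k^2p}\bigl(enp\,e^{-p(n-k)}\bigr)^k$ you replace $p(n-k)$ by the uniform lower bound $pn/2$, obtaining the geometric ratio $\beta:=enp\,n^{-1/2}e^{-\omega(n)/2}$. But then the geometric series bound yields
\[
\sum_{k\geq 2}\Ex[Y_k]\leq \frac{\beta^2}{4p(1-\beta)} =\Theta\!\left(np\cdot e^{-\omega(n)}\right),
\]
and since $np\geq\log n\to\infty$ this is \emph{larger} than $e^{-\omega(n)}$, not smaller. The same error shows up in the explicit computation: you write ``$O(n^2p\,e^{-pn})=O(e^{-\omega(n)}\log n/n)$,'' but in fact $n^2p\,e^{-pn}=n^2p\cdot n^{-1}e^{-\omega(n)}=np\,e^{-\omega(n)}=(\log n+\omega(n))\,e^{-\omega(n)}$, which is off by a factor of roughly $n^2$ from your claim, and it is \emph{not} $o(e^{-\omega(n)})$. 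The culprit is bounding $e^{-2p(n-2)}$ crudely by $e^{-pn}$: one full power of $e^{-pn}=n^{-1}e^{-\omega(n)}$ is thrown away, and it is exactly that extra factor which makes $\Ex[Y_2]$ genuinely $o(e^{-\omega(n)})$. To repair this, split the range: for $2\leq k\leq n/4$ use $p(n-k)\geq\tfrac{3}{4}pn$, giving the ratio $\beta'=enp\,n^{-3/4}e^{-3\omega(n)/4}$ and hence $\sum_{2\leq k\leq n/4}\Ex[Y_k]=O\bigl(np\,n^{-1/2}e^{-3\omega(n)/2}\bigr)=o(e^{-\omega(n)})$; for $n/4<k\leq n/2$ the factor $\beta<1$ raised to power at least $n/4$ already makes those terms super-exponentially small. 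Alternatively, the cleaner route (and the one Bollob\'as uses) is to compute the consecutive ratio $t_{k+1}/t_k\leq e(n-k)p\,(1-p)^{n-2k-1}$, which is $o(1)$ uniformly in $k\leq n/2$, so the whole tail is $(1+o(1))t_1$ at once. Either repair yields $\pr{\cc^c}\leq(e+o(1))e^{-\omega(n)}\leq 4e^{-\omega(n)}$.
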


\subsubsection*{Basics of Electrical Network Theory}
There is a rich connection between random walks on graphs and electrical networks, consult  either of the books \cite{doyle1984random,lyons2005probability} for a thorough introduction to the subject. Here we only intend to make the definition of $R(i,j)$ given in the introduction rigorous and cover the essentials.

An electrical network, $N:=(G,C)$, is a graph $G$ and an assignment of conductances $C:E(G)\rightarrow \mathbb{R}^+$ to the edges of $G$. Our graph $G$ is undirected and we define $\vec E(G):=\left\{\vec{xy}: xy\in E(G)\right\}$, this is the set of all possible oriented edges for which there is an edge in $G$. For some $i,j \in V(G)$, a flow from $i$ to $j$ is a function $\theta:\vec E(G)\rightarrow \mathbb{R}$ satisfying $\theta(\vec{xy}) = -\theta(\vec{yx})$ for every $xy \in E(G)$ as well as Kirchoff's node law for every vertex apart from $i$ and $j$, i.e.
\begin{equation*}
\sum\limits_{u \in \Gamma_1(v)} \theta(\vec{uv}) = 0 \qquad \text{for each} \,v \in V,\;v \neq i,j.
\end{equation*}
A flow from $i$ and $j$ is called a unit flow if in addition to the above it has strength 1, that is
\[\sum\limits_{u \in \Gamma_1(i)} \theta(\vec{iu}) = 1, \qquad \sum\limits_{u \in \Gamma_1(j)} \theta(\vec{uj}) = 1. \]
For the network $N=(G,C)$ we can then define the effective resistance $R_C(i,j)$ between two vertices $i,j\in V(G)$. First for a flow $\theta$ on $N$ let
\[ \mathcal{E}(\theta) = \sum\limits_{e \in \vec E}\frac{\theta(e)^2}{2C(e)},\] be the energy dissipated by $\theta$. Then for $i,j \in V(G)$, $R_C(i,j)$ can be defined as
\begin{equation}\label{resdef}
R_C(i,j):= \inf\left\{\mathcal{E}(\theta): \theta \text{ is a unit flow from $i$ to $j$} \right\}.                                                                                                                                                                                                                                                                             
\end{equation}
We refer to the flow minimising \eqref{resdef} as the current, thus the effective resistance is the energy dissipated by the current of strength $1$ from $i$ to $j$ in $N=(G,C)$. This current exists and is unique since we are working on a finite graph. Equivalently $R(i,j)$ is the reciprocal of the amount of flow when a unit potential difference is fixed between $i$ and $j$. 

The conductances $C$ define a reversible Markov chain \cite{lyons2005probability}. In this paper we fix $C(e)=1$ for all $e \in E(G)$ as this corresponds to a simple random walk, in this case we write $R(i,j)$ instead of $R_C(i,j)$. This $R(i,j)$ is the effective resistance in Equations \eqref{Kirchoff}, \eqref{eq:hit}, and \eqref{commute}. 

One very useful tool is Rayleigh's monotonicity law \cite[\S\;2.4 ]{lyons2005probability}: If $C,C':E(G)\rightarrow \mathcal{R}^+$ are conductances on the edge set $E(G)$ of a connected graph $G$ and $C(e)\leq C'(e)$ for all $e \in E(G)$ then for all pairs $\{i,j\}\subset V(G)$, we have $R_{C'}(i,j) \leq R_{C}(i,j)$.

\section{Bounds on Effective Resistance} \label{ressec}

The aim of this section is to obtain bounds on $R(u,v)$ for which the main contribution, when applied to graphs with good expansion, comes from the first neighbourhoods of $u$ and $v$.
\subsection{General Bound}\label{genbdd}

Recall that ${d}(x)$ denotes the size of the first neighbourhood of vertex $x\in V(G)$. Jonasson gives the following lower bound on effective resistance. 
\begin{lemma}[{\cite[Lem. 1.4]{jonasson1998cover}}]\label{lowjoh}  For any graph $G = (V,E)$ and $x,y \in V$, $x\neq y$ 
	\[
	R(x,y) \geq \frac{1}{{d}(x) +1} + \frac{1}{{d}(y) +1}.
	\]
\end{lemma}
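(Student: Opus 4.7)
The plan is to apply Rayleigh's monotonicity law by contracting every vertex outside $\{u,v\}$ into a single super-vertex. Form a new network $N'$ by identifying all of $V\setminus\{u,v\}$ into one vertex $w$ and discarding the resulting self-loops (which carry no current). Gluing vertices is equivalent to joining them by edges of infinite conductance, so by monotonicity this operation can only decrease effective resistance; consequently $R(u,v)\geq R^{N'}(u,v)$. The virtue of $N'$ is that it has at most three vertices and is purely series-parallel, so $R^{N'}(u,v)$ is available in closed form.

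Write $a=\gamma_1(u)$ and $b=\gamma_1(v)$. I would split into two cases depending on whether $u$ and $v$ are adjacent. If $uv\notin E(G)$, then $N'$ consists of $a$ unit edges between $u$ and $w$ in series with $b$ unit edges between $w$ and $v$, whence $R^{N'}(u,v)=1/a+1/b$, which is already at least $1/(a+1)+1/(b+1)$. If $uv\in E(G)$, then $N'$ is the parallel combination of the direct edge $uv$ (unit resistance) with a length-two path through $w$ whose two legs are bundles of $a-1$ and $b-1$ parallel edges; elementary series-parallel arithmetic gives
$$R^{N'}(u,v)=\frac{a+b-2}{ab-1}.$$

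It remains to check that this last expression is at least $1/(a+1)+1/(b+1)$. Clearing denominators turns the desired inequality into $(a+b-2)(a+1)(b+1)\geq(a+b+2)(ab-1)$, and expanding both sides cancels the cubic terms $a^2b+ab^2$ as well as the linear contributions, leaving exactly $a^2+b^2-2ab\geq 0$, i.e.\ $(a-b)^2\geq 0$. Thus the inequality is automatic and is tight precisely when $a=b$, which matches the triangle example.

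The only remaining subtlety is the degenerate situation $a=1$ or $b=1$ with $uv\in E(G)$, where one leg of the alternative path is empty and the path becomes an open circuit; then $R^{N'}(u,v)=1$ while $1/(a+1)+1/(b+1)\leq 1/2+1/2=1$, so the bound still holds. I do not expect any serious obstacle: once monotonicity reduces the problem to a three-vertex network, all that is needed is a routine factorisation to $(a-b)^2\geq 0$. A slightly longer but perhaps more self-contained alternative would be to invoke Thomson's principle directly: for any unit flow $\theta$ from $u$ to $v$, let $\alpha$ be the flow along the edge $uv$ (zero if no such edge exists) and bound the energy from below by $\alpha^2+(1-\alpha)^2/(a-1)+(1-\alpha)^2/(b-1)$, then optimise in $\alpha$; this reproduces the same closed-form lower bound without explicit short-circuiting.
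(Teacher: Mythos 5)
Your proof is correct and complete. The paper does not prove this lemma itself --- it is cited verbatim from Jonasson (his Lemma~1.4) --- but your argument (contract $V\setminus\{u,v\}$ to a single vertex, invoke Rayleigh monotonicity, and in the adjacent case reduce the series--parallel formula $\tfrac{a+b-2}{ab-1}\geq \tfrac{1}{a+1}+\tfrac{1}{b+1}$ to $(a-b)^2\geq 0$) is the standard short-circuiting proof and, as far as I can tell, the same route Jonasson takes; the algebra and the degenerate cases $a=1$ or $b=1$ (including $a=b=1$, where $\tfrac{a+b-2}{ab-1}$ is formally $0/0$ but $R^{N'}=1$) are all handled correctly.
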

We seek an upper bound where the dominant term looks roughly like the one in Lemma \ref{lowjoh}. To achieve this we shall define a sub-graph $H(x,y,r,k)$ which will allow us to route flow through the graph efficiently. Recall the definition \eqref{nebhs} of $\Gamma_{H,k}(x)$, $d_{H,k}(x)$ and $B_{H,k}(x)$.  
\begin{definition}\label{hdef}For a graph $G$ and $x,y \in V(G)$, $x\neq y$ define $H:=H(x,y,r,k) \subseteq G$ as follows. Firstly $B_{H,r}(x)$ (resp. $B_{H,r}(y)$) is a tree with a non-empty set of leaves all at distance $r$ from $x$ (resp. $y$). Secondly for each vertex $w \in \Gamma_{H,r}(x)\cup \Gamma_{H,r}(y)$ there exists a set $H_w$ such that 
	\begin{enumerate}[(i)]
		\item The vertex $w$ is connected to every vertex in $H_w$ by paths of lengths at most $k$.
		\item\label{prop2} For every distinct $w,z \in \Gamma_{H,r}(x)\cup \Gamma_{H,r}(y)$, the sets $H_w,H_z$ are disjoint and the paths connecting $w$ to $H_w$ and $z$ to $H_z$ are edge disjoint. 
		\item For every $(w,z) \in \Gamma_{H,r}(x)\times\Gamma_{H,r}(y) $ there is an edge from $H_w$ to $H_z$.   
	\end{enumerate}     
\end{definition} 

To better understand why defining $H(x,y,r,k)$ as above will help us control effective resistance $R(x,y)$ in graphs with good expansion we first recall definition \eqref{resdef} which states that $R(x,y)$ is determined by the sum over all edges of the square of the current through each edge. Heuristically one wishes to keep the max flow through an edge as small as possible, since this max is squared. If a graph has good expansion properties, that is every set has lots of edges leaving the set, then the smallest edge cuts separating $x$ from $y$ will be ``close'' to $x$ and $y$. Our approach is thus to find a flow from $x$ to $y$ which balances the flow as evenly as possible over edges close to $x$ and $y$ where the cuts are smaller and thus the amount of flow is greater. We divide the flow evenly among all descendants of $x$ (from the perspective of rooting $B_{H,r}(x)$ at $x$) and do the same for the flows in the reverse direction in $B_{H,r}(y)$. Then once we are clear of the $r$-neighbourhoods we use the abundance of paths to route the flow from $B_{H,r}(x)$ to $ B_{H,r}(y)$. The challenge is to then make sure all the paths meet up and that we have a valid flow, the structure of the desired sub-graph $H$ outlined in Definition \ref{hdef} will allow us to do this. 
  
We now bound $R(x,y)$ in terms of the neighbourhoods of vertices in $H(x,y,r,k)$. This bound is effective when a sub-graph $H$ can be found who's vertices close to $x$ and $y$ have degrees similar to what they were in the original graph. Let $\mathcal{P}_{i}(x):= \left\{x_0x_1\cdots x_i: x_0=x,\; x_{k-1}x_{k}\in E(H)\right\} $ be the set of all paths of length $i$ from $x$ in $H$, and $\mathbf{1}_{j=x}=1$ if $j=x$ and $0$ otherwise.

\begin{theorem}\label{resbdd} For a graph $G$,  $x,y \in V$ disjoint and $k,r \geq 1$, if $H(x,y,r,k) \subseteq  G$ then
		\begin{equation*}R\left(x,y\right)\leq \frac{1}{d_{H}(x)} + \frac{1}{d_{H}(y)} + \sum_{i=1}^{r}\sum_{pp_1\cdots p_i} \frac{1}{d_{H}(p)^2}\prod_{j=1}^{i-1}\frac{1 + \mathbf{1}_{j=r-1}\cdot (k+1 )  }{(d_{H}(p_j) -1)^2} 
	\end{equation*}where summation $pp_1\cdots p_i\in \mathcal{P}_i(x)\cup\mathcal{P}_i(y)$ is over  all paths of length $i$ from $x$ or $y$ in $H$.   
	
\end{theorem}
\begin{proof}
We will now describe a unit flow $\theta$ from $x$ to $y$ through the network $N=(H,C)$ where $C(e)=1$ for all $e \in E(H)$. This flow will be used to bound $R(x,y)$ from above by \eqref{resdef}.

To begin we assign a flow of $\theta(xx_1) =1/d_{H}(x)$, where $x_1\in \Gamma_H(x)$ is a neighbour of $x$. Likewise let $\theta(yy_1) = - 1/d_{H}(y)$, where $y_1\in \Gamma_H(y)$. Then, for each edge $x_{i-1}x_{i}$ where $x_i \in \Gamma_{H,i}(x) $ and $1\leq i\leq r$ we send the amount of flow entering $x_i$ divided by the number of edges to $x_i \in \Gamma_{H,i}(x) $. So inductively if the unique path from $x=x_0$ to some $x_i$ is $x_0,x_1,\dots, x_i$, then the flow through the (directed) edge  $x_{i-1}x_i$ is 

	\begin{equation*}\theta(x_{i-1}x_i ) = \frac{1}{d_{H}(x)(h(x_1) -1 )\cdots (d_{H}(x_{i-1}) - 1)} = \frac{1}{d_{H}(x)}\prod_{j=1}^{i-1}\frac{1}{d_{H}(x_j) -1},\end{equation*}   
Where we follow the convention that empty products are equal to $1$. 

We do the same with edges in the $r$ neighbourhood of $y$ but the flow is reversed. The total contribution to $\mathcal{E}(\theta)$ from the ball $B_{H,r}(x)$ of radius $r$ around $x$ is then given by
\begin{equation}\label{rnbh}\sum_{i=1}^r\sum_{xx_1\cdots x_i \in \mathcal{P}_i(x)} \frac{1}{d_{H}(x)^2}\prod_{j=1}^{i-1}\frac{1}{(d_{H}(x_j) -1)^2} 
 \end{equation}
and likewise for the contribution to $\mathcal{E}(\theta)$ from the edges in $B_{H,r}(y)$. 

We now describe the flow across an edge from $H_w$ to $H_z$ where $(w,z )\in \Gamma_{H,r}(x)\times\Gamma_{H,r}(y)$. Indeed for each such edge $e_{w,z}$ we assign a flow 
\[\theta(e_{w,z}) = \theta(x_{r-1}w)\cdot \theta(zy_{r-1}) = \frac{1}{d_{H}(x)\cdots (d_{H}(x_{r-1})-1 ) }\cdot \frac{1}{d_{H}(y)\cdots (d_{H}(y_{r-1})-1 ) },   \]
where $xx_1\cdots x_{r-1}w \in \mathcal{P}_{r}(x)$ (resp. $yy_1\cdots y_{r-1}z \in \mathcal{P}_{r}(y)$) is the unique path of length $r$ from $x$ (resp. $y$) to $w$ (resp. $z$) in $H$. The reason for assigning this flow is that if we sum over all the vertices in $\Gamma_{H,r}(y)$ we obtain
\begin{equation}\label{flowok}\sum_{z\in \Gamma_{H,r}(y)}\theta(e_{w,z}) = \theta(x_{r-1}w)\sum_{\mathcal{P}_r(y)}\frac{1}{d_{H}(y)\cdots (h(y_{r-1})-1 ) } =\theta(x_{r-1}w),  \end{equation} which is precisely the flow leaving through $w \in \Gamma_{H,r}(x) $. Thus the contribution to $\mathcal{E}(\theta)$ by the flow through these edges is precisely
\begin{equation}\label{flowcross}
\sum_{xx_1\cdots x_r \in \mathcal{P}_r(x)} \frac{1}{d_{H}(x)^2}\prod_{j=1}^{r-1}\frac{1}{(d_{H}(x_j) -1)^2} = \sum_{yy_1\cdots y_r \in \mathcal{P}_r(y)} \frac{1}{d_{H}(y)^2}\prod_{j=1}^{r-1}\frac{1}{(d_{H}(y_k) -1)^2}
\end{equation}

We are not concerned with how flow is rooted from $w$ to the relevant vertices of $H_w$ but note that since there is a path from $w$ to each vertex in $H_w$ with an edge to some $H_z$ where $z\in \Gamma_{H,r}(y) $ constructing a flow is possible. We now bound the contribution from these paths. 

\begin{cla}
	The contribution to the $\mathcal{E}(\theta)$ by the flow through the paths from $w\in \Gamma_{H,r}(x)$ to $H_w$ is at most $k\cdot \theta(x_{r-1}w)^2$, where $xx_1\cdots x_{r-1}w\in \mathcal{P}_r(x)$. The analogous bound holds for $z\in \Gamma_{H,r}(y) $.	
\end{cla}

\begin{poc}We can assume that all paths to vertices in $H_w$ have length $k$ otherwise we can subdivide edges on these paths, only increasing total resistance by Rayleigh's monotonicity law. Consider the set $S_i$ of edges in the union of all paths to $H_w$ with furthest endpoint from $w$ at distance $1\leq \ell \leq k$ from $w$, this edge set separates $w$ from $H_w$. Recalling property \eqref{prop2} of Definition \ref{hdef}, the only flow through these edges is that from $w$ to $H_w$. Thus the combined flow through $S_i$ is $\theta(x_{r-1}w)$ since this is the amount of flow entering at $w$ and leaving $H_w$, as shown by \eqref{flowok}. Thus since the contribution to $\mathcal{E}(\theta)$ by the edges of $S_i$ is the sum of the squares of the flows through each edge of $S_i$ we see that this cannot exceed $\theta(x_{r-1}w)^2 $ by convexity. The result follows by summing the contributions from the $k$ such edge sets $S_i$.	
\end{poc} 

Thus the contribution to $\mathcal{E}(\theta)$ from all edges in these paths is at most 
\begin{equation}
\label{Flowpaths}
\sum_{xx_1\cdots x_r \in \mathcal{P}_r(x)} \frac{k}{d_{H}(x)^2}\prod_{j=1}^{r-1}\frac{1}{(d_{H}(x_j) -1)^2} + \sum_{yy_1\cdots y_r \in \mathcal{P}_r(y)} \frac{k}{d_{H}(y)^2}\prod_{j=1}^{r-1}\frac{1}{(d_{H}(y_j) -1)^2}.
\end{equation}The result follows by summing the contributions \eqref{rnbh}, \eqref{flowcross} and \eqref{Flowpaths} to $\mathcal{E}(\theta)$.\end{proof}

\subsection{Application to $\mathcal{G}(n,p)$}\label{apppg}To apply Theorem \ref{resbdd} to $\cg(n,p)$ one must describe a suitable $H(x,y,r,k)$, this is achieved using the modified breadth-first search (MBFS) algorithm. The inputs to the MBFS algorithm are a graph $G$ and a subset $I_0=\{u,v\}\subseteq V(G)$, the outputs are sets $I_i,S_i\subseteq V(G)$  and $E_i \subseteq E(G)$ indexed by the iteration of the algorithm.  The algorithm is similar to one used in \cite[\S 11.5]{alon2008probabilistic} to explore the giant component of an \er graph. However, the MBFS algorithm differs from other variations on breadth-first search algorithms used in the literature  as it starts from two distinct vertices. More importantly it also differs by removing clashes, where a clash is a vertex with more than one parent in the previous generation as exposed by a breadth-first search from two root vertices. In what follows all graphs are on a common labelled vertex set $V:=[n]$.\medskip

\noindent{\bf Modified Breadth-First Search Algorithm, $\mbfs(G,I_0)$:} To begin set $S_0:= V\backslash I_0$, and $I_i=E_i=\emptyset$ for all $i \geq 1$. Then generate the sets $S_i$ and update the sets $ I_{i}$ and $E_i$ for $i\geq 1$ iteratively by the following procedure:
\begin{enumerate}[{\textup{Step}}~1:]
	\item Set $S_i=S_{i-1}$. For each $w\in S_{i}$ check all pairs $\{w,w'\}$ where $w' \in I_{i-1}$ and,  
\end{enumerate} 
	\begin{itemize}
		\item if there exists $w'\in I_{i-1}$ such that  $ww'\in E(G)$ then remove $w$ from $S_{i}$,
		\item if there is a unique $w'\in I_{i-1}$ such that $ww'\in E(G)$ then add $w$ to $I_{i}$ and add $ww'$ to $ E_i$.  
	\end{itemize} 
\begin{enumerate}[{\textup{Step}}~2:]
	\item If $S_{i}\neq \emptyset $ and $ I_{i}\neq \emptyset $ then advance $i$ to $i+1$ and return to \textup{Step} 1. Otherwise end.   
\end{enumerate}

The set $I_i$ contains the ``active'' vertices in the $i^{th}$ iteration and $S_i$ is the set of vertices that have not been used in the first $i$ iterations and $E_i$ is the set $\left\{xy \in E(G): x \in I_{i-1}, y \in I_{i} \right\}$ of edges ``accepted'' by the algorithm. Notice that $S_0 \supseteq S_1 \supseteq S_2 \dots$ and the sets $\{I_i\}_{i\geq 0}$ are all disjoint. A vertex in $S_i$ will not be included in either $I_{i+1}$ or $S_{i+1}$ if it has two or more neighbourhoods in $I_i$, in this instance it is just ignored by the algorithm. If instead those vertices in $S_i$ with edges to more than one vertex in $I_i$ were added to $I_{i+1}$ then this procedure would describe a standard breadth-first search starting from two root vertices. Notice also that in Step 1 the order in which we consider the vertices of $S_i$ and then the edges between $S_i$ and $I_i$ is unimportant. 

For each pair of vertices $I_0\subseteq V$ the $\mbfs$ algorithm provides a filtration \begin{equation}\label{eq:filt} 
\mathfrak{F}_i:= \mathfrak{F}_i(I_0),\end{equation} where $\mathfrak{F}_0 \subseteq \mathfrak{F}_1 \subseteq \cdots$, on the set of labelled graphs on $V$. Roughly speaking $\mathfrak{F}_i(I_0)$ only sees graphs that are the distinguishable by $\mbfs$ run up to step $i\geq 0$ from initial set $I_0$. To make this precise we must first describe an equivalence relation on graphs. Let $u,v \in V$ and $G,F$ be graphs on $V$. We say $G\cong_{k}^{\{u,v \}} F$ if  the same $k$-sequence of sets $\left\{ S_i, I_i, E_i \right\}_{1 \leq i \leq k}$ is output when $\mbfs(G, \{u,v \} )$ and $\mbfs(F, \{u,v \} )$ are run for $k$ iterations. Let $I_0=\{u,v \}\subseteq V$ and define $\mathfrak{F}_i(I_0)$ to be the $\sigma$-algebra where the atoms are the equivalence classes of $\cong_{i}^{\{u,v\}}$.

Let $x\in I_k$ where $I_k$ is produced by running $\mbfs(G,I_0)$ for some given $I_0$. We shall now define $\Gamma_i^*(x)$, the $\mbfs$ neighbourhood of $x$, let $\Gamma_0^*(x):=x$ and for $i\geq 1$ \begin{equation}\label{eq:gamdef} 
\Gamma_i^*(x) := \left\{ y \in I_{k+i} : \begin{array}{c} \text{ there exists } x=x_0, x_1, \dots , x_i=y \text{ where } \\ x_{j-1}x_j \in E_{k+j}\text{ for all }j=1,\dots, i\end{array}\right\}, 
\end{equation} and let ${d}_i^*(x) = |\Gamma_i^* (x)|$. Equivalently, we can also define $\Gamma_i^*(x)$ for $i\geq 1$ inductively 
\[\Gamma_i^*(x):=\left\{ z \in I_{k+i}: \text{ there exists } y \in \Gamma_{i-1}^*(x)\text{ and }yz \in E_i  \right\} .\] 
To try and further clarify \eqref{eq:gamdef} we define the following sets $S_k(x)$ which are the vertices in $S_k$ that will not cause any clashes when the $\Gamma^*$-neighbourhood of $x$ is explored, \begin{equation}\label{eq:WS}
S_{k}(x):= S_{k}\Big\backslash \left(\bigcup \limits_{z \in I_{k},\; z\neq x}\Gamma_1(z)\right).
\end{equation}
We can then also define the neighbourhood $\Gamma_i^*(x)$ inductively as follows
\begin{equation}\label{second*def} \Gamma_{i}^*(x) = \bigcup_{y \in \Gamma_{i-1}^*(x)} \Gamma_1(y) \cap S_{k+i}(y). \end{equation}              

We define the pruned neighbourhood $\Phi(x)$ of $x \in I_1$ by \begin{equation}\label{eq:phi}
\Phi(x) := \Gamma_1^*(x)\backslash \left\{y: {d}^*(y) \leq D  \right\},\qquad \text{and let}\qquad\varphi(x):=\left|\Phi(x) \right| ,                                                   \end{equation}where 
\begin{equation}\label{Ddef}
D:=D(n,p) =\begin{cases}\max\left\{\lceil\frac{50}{c} \rceil, 50\right\} &\text{if }np = (c\pm o(1)) \log (n) \text{ where } c>0\\
0 &\text{if }np = \omega \left(\log (n) \right).
\end{cases}                                                                                      \end{equation} This is the $\mbfs$ neighbourhood of $x$ with all the neighbours who have less than $D$ ``$\mbfs$-children'' removed. The choice of $D$ is related to concentration for binomial random variables and our choice for this will be apparent during the proof of Lemma \ref{neighlow}. One may think of $D$ as an atypically small value for the degree of a vertex. 

Define the pruned neighbourhoods $\Psi_1(w)$ of $w\in I_0$ by
\begin{equation}\label{eq:psi}\Psi_1(w) := \Gamma_1^*(w)\backslash \left\{y: \Phi(y) =\emptyset  \right\},\qquad \text{and let}\qquad \psi(w):=\left|\Psi_1(w) \right|. \end{equation} Set $\Psi_0(u)=\{u\}$ and let the pruned second neighbourhood $\Psi_2(w)$ of $w \in I_0$ be given by
\begin{equation}\label{eq:psi2}
\Psi_2(w) := \bigcup\limits_{x\in \Psi_1(w)}\Phi(x) = \bigcup\limits_{x\in \Gamma_1^*(w)}\Phi(x), \qquad \text{also let}\qquad \psi_2(w):=\left|\Psi_2(w) \right|. 
\end{equation} For $\mbfs(G,\{u,v\})$ define $\Psi_i$, the pruned version of $I_i$ for $i=0,1,2$, by
\begin{equation*}
\Psi_i:= \Psi_i(u)\cup  \Psi_i(v),\quad i=0,1,2. 
\end{equation*}
We prune the first neighbourhoods of vertices $x \in I_1$ to obtain $\Phi(x)$ so that later on when we consider the trees induced by the union up to $i$ of the $\Gamma^*$-neighbourhoods of $y \in \Phi(x)$ we can get good control over the growth rate of the trees. We prune the first neighbourhoods of vertices $w \in I_0$ as above so that we can send flow from our source vertex $w$ to its pruned neighbourhood $\Psi_1(w)$ without having to worry about it getting stuck in any ``dead ends''.  

Recall \eqref{eq:filt}, the definition of the filtration $\mathfrak{F}_k(I_0)$. Observe that if $x \in I_k$ then $\Gamma_1^*(x)$ is $\mathfrak{F}_{k+1}$ measurable. It is worth noting however that if $y\in I_1$ then $\Phi(y)$ is $\mathfrak{F}_3$ measurable and not necessary $\mathfrak{F}_2$ measurable since $\Phi(y)$ is determined by vertices at distances $2$ and $3$ from $I_0$. A consequence of this is that for $w \in I_0$, $\Psi_1(w),\Psi_2(w)$ are both $\mathfrak{F}_3$ measurable as they are both determined by the $\Phi$-neighbourhoods of vertices in $\Gamma_1^*(w)$.

\begin{definition}[The Set $A_{u,v}^{n,k}$]\label{A} For integers $n,k\geq 0$ let $A_{u,v}^{n,k}$ be the set of $n$-vertex graphs on $V$, where $u,v \in V$, such that for every pair $ (x,y) \in \Psi_2(u)\times \Psi_2(v)$ the neighbourhoods $\Gamma_k^*(x)$ and $\Gamma_k^*(y)$ are non-empty and there is at least one edge $ij \in E(G)$ where $i \in \Gamma_k^*(x)$, $j \in \Gamma_k^*(y)$. 
\end{definition}

We relate the structure of $G\in A_{u,v}^{n,k}$ to Theorem \ref{resbdd} to give a bound on $R(u,v)$.

\begin{corollary}[of Theorem \ref{resbdd}]\label{Gnpbdd} Run $\mbfs(G,\{i,j\})$ and suppose $G\in A_{i,j}^{n,k}$. Then
	\begin{align*}
	R\left(i,j\right) &\leq \frac{1}{\psi(i)} + \frac{1}{\psi(j)} +\sum_{a \in \Psi_1(i)}\frac{k+2}{\psi(i)^2\cdot \varphi(a)} + \sum_{b \in \Psi_1(j)}\frac{k+2}{\psi(j)^2\cdot \varphi(b)}\\
& \leq \frac{1}{\psi(i)}\left(1+ \sup_{x\in \Psi_1(i)}\frac{k+2}{\varphi(x)}\right) + \frac{1}{\psi(j)} \left( 1+ \sup_{y\in \Psi_1(j)}\frac{k+2}{\varphi(y)}\right). \end{align*}	
\end{corollary}
\begin{proof}
	If we can find a some suitable sub-graph $H(i,j,2,k)$, from Definition \ref{hdef}, encoded by the property $A_{i,j}^{n,k}$ then the result follows by Theorem \ref{resbdd}. The only thing that can go wrong with $G\in A_{i,j}^{n,k}$ according to Definition \ref{A} is that if one of the neighbourhoods $\Psi_1(i),\Psi_1(j),\Psi_2(i)$ or $\Psi_2(j) $ are empty. In this case one of the terms $\psi(i)$,$\psi(i)$, $ \varphi(a) $ or $ \varphi(b) $ on the RHS of the inequality will be $0$, we define $1/0$ to be infinity and so the inequality holds vacuously. 
\end{proof}
We encode Definition \ref{A} as the following event for $\cg \sim \cg(n,p)$,
\begin{equation}\label{calA}\mathcal{A}_{u,v}  := \{\text{ exists }k \leq \log n /(2\log np) +2\text{ such that }\mathcal{G} \in  A_{u,v}^{n,k}\},\end{equation} 
and say that $\cg$ satisfies the strong path property if this holds. 
\begin{figure}       
	\begin{center}
	
	\definecolor{ffqqqq}{rgb}{1.0,0.0,0.0}
	\definecolor{ffxfqq}{rgb}{1.0,0.5,0.0}
	\definecolor{qqqqff}{rgb}{0.0,0.0,1.0}
	\begin{tikzpicture}[line cap=round,line join=round,>=triangle 45,x=0.87cm,y=0.87cm]
	\clip(-1,-0.85) rectangle (14.1,6.6);
	\draw [line width=1.32pt,color=qqqqff] (-0.8533410836607904,2.8430038215593485)-- (0.8985076927995811,4.078060956872299);
	\draw [line width=1.32pt,color=qqqqff] (0.9698009171060231,2.794782919356338)-- (-0.8533410836607904,2.8430038215593485);
	\draw [line width=1.32pt,color=qqqqff] (-0.8533410836607904,2.8430038215593485)-- (0.7559212441866971,1.7253845547597038);
	\draw [line width=1.32pt,color=ffxfqq] (0.8985076927995811,4.078060956872299)-- (2.986658916339206,5.263003821559348);
	\draw [line width=1.32pt,color=ffxfqq] (3.0666589163392053,4.563003821559349)-- (0.8985076927995811,4.078060956872299);
	\draw [line width=1.32pt,color=ffxfqq] (0.8985076927995811,4.078060956872299)-- (3.0373044219928347,3.8285346717997513);
	\draw [line width=1.32pt,color=ffxfqq] (3.0373044219928347,3.222542265194992)-- (0.9698009171060231,2.794782919356338);
	\draw [line width=1.32pt,color=ffxfqq] (3.0666589163392053,1.5830038215593483)-- (0.9698009171060231,2.794782919356338);
	\draw [line width=1.32pt,color=ffxfqq] (3.026658916339206,0.9630038215593484)-- (0.7559212441866971,1.7253845547597038);
	\draw [line width=1.32pt,color=ffxfqq] (0.7559212441866971,1.7253845547597038)-- (2.9303645855331726,0.2638734564776367);
	\draw [line width=1.32pt,color=ffxfqq] (3.0373044219928347,2.4383167978241267)-- (0.9698009171060231,2.794782919356338);
	\draw [line width=1.32pt,color=qqqqff] (13.798997748790955,2.6431244389904722)-- (11.69830000343381,3.8284927765482064);
	\draw [line width=1.32pt,color=qqqqff] (13.798997748790955,2.6431244389904722)-- (11.69830000343381,2.6165079633386874);
	\draw [line width=1.32pt,color=qqqqff] (13.798997748790955,2.6431244389904722)-- (11.840886452046693,1.5114629865888318);
	\draw [line width=1.32pt,color=ffxfqq] (9.936563473392043,0.326458015416191)-- (11.840886452046693,1.5114629865888318);
	\draw [line width=1.32pt,color=ffxfqq] (9.936563473392043,1.006458015416191)-- (11.840886452046693,1.5114629865888318);
	\draw [line width=1.32pt,color=ffxfqq] (11.69830000343381,2.6165079633386874)-- (9.9677705777903,2.1863154353287437);
	\draw [line width=1.32pt,color=ffxfqq] (11.69830000343381,2.6165079633386874)-- (9.946549396998664,2.907835582244414);
	\draw [line width=1.32pt,color=ffxfqq] (11.69830000343381,3.8284927765482064)-- (9.945837602255024,4.082762086017134);
	\draw (-1.1,3.4) node[anchor=north west] {$u$};
	\draw [rotate around={86.03704580186712:(0.8852198144366249,2.839623242954415)},line width=1.02pt,dash pattern=on .67pt off .67pt] (0.8852198144366249,2.839623242954415) ellipse (1.4231398809853604cm and 0.352964157162801cm);
	\draw [rotate around={-88.57236332543971:(11.727834791054017,2.689022587744238)},line width=1.02pt,dash pattern=on .67pt off .67pt] (11.727834791054017,2.689022587744238) ellipse (1.5576236092646378cm and 0.29040178988699744cm);
	\draw (13.6,3.2) node[anchor=north west] {$v$};
	\draw [line width=1.32pt,color=ffxfqq] (9.945837602255024,4.751753379538426)-- (11.69830000343381,3.8284927765482064);
	\draw [line width=1.02pt,dash pattern=on 2pt off 2pt,color=ffxfqq] (5.592970580188116,5.468064142835809) circle (0.24005471158687333cm);
	\draw [line width=1.02pt,dash pattern=on 2pt off 2pt,color=ffxfqq] (5.611688332053448,4.681918564491881) circle (0.24005471158687333cm);
	\draw [line width=1.02pt,dash pattern=on 2pt off 2pt,color=ffxfqq] (5.630406083918779,3.951926241743948) circle (0.24005471158687192cm);
	\draw [line width=1.02pt,dash pattern=on 2pt off 2pt,color=ffxfqq] (5.630406083918779,3.165780663400019) circle (0.24005471158687333cm);
	\draw [line width=1.02pt,dash pattern=on 2pt off 2pt,color=ffxfqq] (5.630406083918779,2.4170705887867543) circle (0.24005471158687333cm);
	\draw [line width=1.02pt,dash pattern=on 2pt off 2pt,color=ffxfqq] (5.611688332053448,1.630925010442826) circle (0.24005471158687333cm);
	\draw [line width=1.02pt,dash pattern=on 2pt off 2pt,color=ffxfqq] (5.592970580188116,0.863497183964229) circle (0.24005471158687341cm);
	\draw [line width=1.02pt,dash pattern=on 2pt off 2pt,color=ffxfqq] (5.574252828322784,-0.01623715370635753) circle (0.2400547115868732cm);
	\draw [line width=1.02pt,dash pattern=on 2pt off 2pt,color=ffxfqq] (7.1839794887412936,5.449346390970478) circle (0.24005471158687333cm);
	\draw [line width=1.02pt,dash pattern=on 2pt off 2pt,color=ffxfqq] (7.165261736875962,4.513458797703897) circle (0.24005471158687333cm);
	\draw [line width=1.02pt,dash pattern=on 2pt off 2pt,color=ffxfqq] (7.25885049620262,3.371675933918667) circle (0.240054711586874cm);
	\draw [line width=1.02pt,dash pattern=on 2pt off 2pt,color=ffxfqq] (7.2775682480679516,2.3983528369214224) circle (0.24005471158687333cm);
	\draw [line width=1.02pt,dash pattern=on 2pt off 2pt,color=ffxfqq] (7.315003751798614,1.1629812138095352) circle (0.2400547115868737cm);
	\draw [line width=1.02pt,dash pattern=on 2pt off 2pt,color=ffxfqq] (7.315003751798614,0.3768356354656069) circle (0.2400547115868733cm);
	\draw (5.574252828322784,5.505499646566473)-- (7.1839794887412936,5.449346390970478);
	\draw (5.574252828322784,5.505499646566473)-- (7.165261736875962,4.569612053299891);
	\draw (5.574252828322784,4.513458797703897)-- (7.240132744337288,3.3529581820533356);
	\draw (5.798865850706763,3.165780663400019)-- (7.09887593910803,4.441876131589434);
	\draw (5.720429241945538,4.804251294245557)-- (7.09887593910803,5.280312782440855);
	\draw (5.819904776792315,4.5981948292058)-- (6.9638734275302605,4.534246271090015);
	\draw (5.805693986099919,3.9231822713169455)-- (7.283616218109189,4.392138364166045);
	\draw (5.684902265214546,4.079500968933312)-- (6.992295008915054,5.543212410250198);
	\draw (5.677796869868347,3.859233713201159)-- (7.084665148415633,3.3121182715438766);
	\draw (5.585426730367767,3.3050128761976785)-- (7.198351473954808,3.525280131929831);
	\draw (7.312037799493982,2.3386791091146857)-- (5.727534637291736,3.0563240390807316);
	\draw (5.6138483117525615,2.2320981789217087)-- (7.240132744337288,3.3529581820533356);
	\draw (5.656480683829751,2.6371057136550218)-- (7.09887593910803,2.3173629230760904);
	\draw (7.2338784506858,2.566051760193037)-- (5.599637521060164,3.1557995739275104);
	\draw (7.447040311071753,2.5021032020772505)-- (5.571215939675371,4.022657806163724);
	\draw (7.297827008801586,3.148694178581312)-- (5.571215939675371,5.351366735902839);
	\draw (5.443318823443799,5.379788317287633)-- (7.2338784506858,2.566051760193037);
	\draw (7.3973025436483635,2.22499278357551)-- (5.6493752884835535,1.741825900034014);
	\draw (5.5783213350215695,2.4665762253462584)-- (7.340459380878777,4.633721805936793);
	\draw (5.500161986213387,2.296046737037495)-- (7.354670171571173,5.415315294018625);
	\draw (7.251699249727316,1.2989406029783652)-- (5.6939031849732356,1.5349703097592884);
	\draw (7.421640638609579,1.062910896197442)-- (5.750550314600656,2.4507655720692707);
	\draw (7.166728555286183,1.0440285196549681)-- (5.542844172633446,2.9700309269873015);
	\draw (7.421640638609579,1.062910896197442)-- (5.514520607819735,3.8669438127548097);
	\draw (7.478287768236999,1.2706170381646544)-- (5.580608925718393,4.669444815809949);
	\draw (7.251699249727316,1.2989406029783652)-- (5.574252828322784,5.505499646566473);
	\draw (5.627814867074577,0.8835283190439404)-- (7.166728555286183,1.0440285196549681);
	\draw (5.627814867074577,0.8835283190439404)-- (7.185610931828657,2.2052946770171102);
	\draw (5.505079419548498,0.9590578252138359)-- (7.297827008801586,3.148694178581312);
	\draw (5.457873478192314,0.7607928715178603)-- (7.14784617874371,4.716650757166133);
	\draw (5.712785561515709,0.7324693067041496)-- (6.992295008915054,5.543212410250198);
	\draw (7.261917025269307,0.40330533543717534)-- (5.457873478192314,0.7607928715178603);
	\draw (7.261917025269307,0.40330533543717534)-- (5.6939031849732356,1.5349703097592884);
	\draw (7.261917025269307,0.40330533543717534)-- (5.5783213350215695,2.4665762253462584);
	\draw (7.261917025269307,0.40330533543717534)-- (5.599637521060164,3.1557995739275104);
	\draw (7.261917025269307,0.40330533543717534)-- (5.514520607819735,3.8669438127548097);
	\draw (7.455107535733684,0.27911143585293197)-- (5.580608925718393,4.669444815809949);
	\draw (7.261917025269307,0.40330533543717534)-- (5.443318823443799,5.379788317287633);
	\draw (5.716392941554292,-0.12106890725185222)-- (7.455107535733684,0.27911143585293197);
	\draw (5.716392941554292,-0.12106890725185222)-- (7.478287768236999,1.2706170381646544);
	\draw (5.509403108913889,0.09972024756458044)-- (7.185610931828657,2.2052946770171102);
	\draw (5.509403108913889,-0.16246687377993335)-- (7.198351473954808,3.525280131929831);
	\draw (5.509403108913889,-0.16246687377993335)-- (7.14784617874371,4.716650757166133);
	\draw (5.509403108913889,-0.16246687377993335)-- (6.992295008915054,5.543212410250198);
	\draw (7.1839794887412936,5.449346390970478)-- (5.489308121022042,1.6110193015580003);
	\draw (5.505079419548498,0.9590578252138359)-- (7.1839794887412936,5.449346390970478);
	\draw [line width=1.32pt,dash pattern=on .67pt off 2pt,color=ffxfqq] (7.158984061488307,5.748371919953735)-- (9.945837602255024,4.751753379538426);
	\draw [line width=1.32pt,dash pattern=on .67pt off 2pt,color=ffxfqq] (9.945837602255024,4.751753379538426)-- (7.196661619917357,5.149546120930404);
	\draw [line width=1.32pt,dash pattern=on .67pt off 2pt,color=ffxfqq] (7.1868339716616,4.812750757257824)-- (9.945837602255024,4.082762086017134);
	\draw [line width=1.32pt,dash pattern=on .67pt off 2pt,color=ffxfqq] (9.945837602255024,4.082762086017134)-- (7.160080644956087,4.213435140880315);
	\draw [line width=1.32pt,dash pattern=on .67pt off 2pt,color=ffxfqq] (7.256403001942905,3.671734341798076)-- (9.946549396998664,2.907835582244414);
	\draw [line width=1.32pt,dash pattern=on .67pt off 2pt,color=ffxfqq] (9.946549396998664,2.907835582244414)-- (7.2704990195662385,3.07183372496415);
	\draw [line width=1.32pt,dash pattern=on .67pt off 2pt,color=ffxfqq] (7.2783633868022335,2.6984201729006485)-- (9.9677705777903,2.1863154353287437);
	\draw [line width=1.32pt,dash pattern=on .67pt off 2pt,color=ffxfqq] (9.9677705777903,2.1863154353287437)-- (7.307010699570038,2.0997323747678323);
	\draw [line width=1.32pt,dash pattern=on .67pt off 2pt,color=ffxfqq] (7.3095917497186855,1.4630007941718235)-- (9.936563473392043,1.006458015416191);
	\draw [line width=1.32pt,dash pattern=on .67pt off 2pt,color=ffxfqq] (9.936563473392043,1.006458015416191)-- (7.324718797801302,0.8630701332399064);
	\draw [line width=1.32pt,dash pattern=on .67pt off 2pt,color=ffxfqq] (7.340292207811563,0.6758365256979899)-- (9.936563473392043,0.326458015416191);
	\draw [line width=1.32pt,dash pattern=on .67pt off 2pt,color=ffxfqq] (9.936563473392043,0.326458015416191)-- (7.301920762996229,0.0770525909629648);
	\draw [line width=1.32pt,dash pattern=on .67pt off 2pt,color=ffxfqq] (2.986658916339206,5.263003821559348)-- (5.554818205339592,5.765697196545732);
	\draw [line width=1.32pt,dash pattern=on .67pt off 2pt,color=ffxfqq] (5.577853432529106,5.168376788725005)-- (2.986658916339206,5.263003821559348);
	\draw [line width=1.32pt,dash pattern=on .67pt off 2pt,color=ffxfqq] (3.0666589163392053,4.563003821559349)-- (5.589004509959853,4.981128328006556);
	\draw [line width=1.32pt,dash pattern=on .67pt off 2pt,color=ffxfqq] (5.573885607346851,4.3842408992698605)-- (3.0666589163392053,4.563003821559349);
	\draw [line width=1.32pt,dash pattern=on .67pt off 2pt,color=ffxfqq] (3.0373044219928347,3.8285346717997513)-- (5.589424373002746,4.249182928359523);
	\draw [line width=1.32pt,dash pattern=on .67pt off 2pt,color=ffxfqq] (5.6229845958248115,3.651949642853725)-- (3.0373044219928347,3.8285346717997513);
	\draw [line width=1.32pt,dash pattern=on .67pt off 2pt,color=ffxfqq] (3.0373044219928347,3.222542265194992)-- (5.624626263362888,3.465793383199727);
	\draw [line width=1.32pt,dash pattern=on .67pt off 2pt,color=ffxfqq] (5.6307988986364474,2.865712531030251)-- (3.0373044219928347,3.222542265194992);
	\draw [line width=1.32pt,dash pattern=on .67pt off 2pt,color=ffxfqq] (3.0373044219928347,2.4383167978241267)-- (5.595555605093814,2.7151083087031216);
	\draw [line width=1.32pt,dash pattern=on .67pt off 2pt,color=ffxfqq] (3.0373044219928347,2.4383167978241267)-- (5.555280296558267,2.126558705310812);
	\draw [line width=1.32pt,dash pattern=on .67pt off 2pt,color=ffxfqq] (5.587222016741779,1.9299942963655807)-- (3.0666589163392053,1.5830038215593483);
	\draw [line width=1.32pt,dash pattern=on .67pt off 2pt,color=ffxfqq] (3.0666589163392053,1.5830038215593483)-- (5.574677818741494,1.3331478113968578);
	\draw [line width=1.32pt,dash pattern=on .67pt off 2pt,color=ffxfqq] (5.537030778625074,1.1583052180616125)-- (3.026658916339206,0.9630038215593484);
	\draw [line width=1.32pt,dash pattern=on .67pt off 2pt,color=ffxfqq] (3.026658916339206,0.9630038215593484)-- (5.562967426133262,0.5649325358324766);
	\draw [line width=1.32pt,dash pattern=on .67pt off 2pt,color=ffxfqq] (5.492309362162306,0.27242581018212425)-- (2.9303645855331726,0.2638734564776367);
	\draw [line width=1.32pt,dash pattern=on .67pt off 2pt,color=ffxfqq] (2.9303645855331726,0.2638734564776367)-- (5.526507180965266,-0.31248264569853607);
	\draw [line width=1.32pt,color=ffqqqq] (10.77417544787252,1.9104400850374579)-- (11.69830000343381,2.6165079633386874);
	\draw [line width=1.32pt,color=ffqqqq] (11.840886452046693,1.5114629865888318)-- (10.77417544787252,1.9104400850374579);
	\draw [rotate around={-89.8237058501583:(2.9782240425901865,2.702008524587666)},line width=1.02pt,dash pattern=on 2pt off 2pt] (2.9782240425901865,2.702008524587666) ellipse (2.391422469449911cm and 0.49532665169487367cm);
	\draw [line width=1.02pt,color=ffqqqq] (2,3.3534803788687984)-- (1.02,2.85);
	\draw (0.4,5.3) node[anchor=north west] {$\Psi_1(u)$};
	\draw (2.4,6.2) node[anchor=north west] {$\Psi_2(u)$};
	\draw (9.5,6.1) node[anchor=north west] {$\Psi_2(v)$};
	\draw (11.1,5.3) node[anchor=north west] {$\Psi_1(v)$};
	\draw (9.35,4) node[anchor=north west] {$\Phi_1(t)$};
	\draw (11.51,3.8) node[anchor=north west] {$t$};
	\draw (10.5,1.9) node[anchor=north west] {$w$};
	\draw (1.75,3.85) node[anchor=north west] {$z$};
	\draw [line width=1.02pt,dash pattern=on 2pt off 2pt] (7.1839794887412936,5.449346390970478) circle (0.410012648404499cm);
	\draw (6.7,6.6) node[anchor=north west] {$\Gamma_k^*(x)$};
	\draw (9.7,4.7) node[anchor=north west] {$x$};
	\draw [rotate around={89.72845805431513:(9.940170912279008,2.6549411046209435)},line width=1.02pt,dash pattern=on 2pt off 2pt] (9.940170912279008,2.6549411046209435) ellipse (2.3323998829242067cm and 0.48768216103044254cm);
	\draw [rotate around={90.0:(9.945837602255024,4.417257732777783)},line width=1.02pt,dash pattern=on 2pt off 2pt] (9.945837602255024,4.417257732777783) ellipse (0.4259014824949661cm and 0.3313369533605295cm);
	\begin{scriptsize}
	\draw [fill=black] (-0.8533410836607904,2.8430038215593485) circle (1.67pt);
	\draw [fill=black] (0.8985076927995811,4.078060956872299) circle (1.67pt);
	\draw [fill=black] (0.9698009171060231,2.794782919356338) circle (1.67pt);
	\draw [fill=black] (0.7559212441866971,1.7253845547597038) circle (1.67pt);
	\draw [fill=black] (2.986658916339206,5.263003821559348) circle (1.67pt);
	\draw [fill=black] (3.0666589163392053,4.563003821559349) circle (1.67pt);
	\draw [fill=black] (3.0373044219928347,3.8285346717997513) circle (1.67pt);
	\draw [fill=black] (3.0373044219928347,3.222542265194992) circle (1.67pt);
	\draw [fill=black] (3.0666589163392053,1.5830038215593483) circle (1.67pt);
	\draw [fill=black] (3.026658916339206,0.9630038215593484) circle (1.67pt);
	\draw [fill=black] (2.9303645855331726,0.2638734564776367) circle (1.67pt);
	\draw [fill=black] (9.945837602255024,4.082762086017134) circle (1.67pt);
	\draw [fill=black] (9.946549396998664,2.907835582244414) circle (1.67pt);
	\draw [fill=black] (9.9677705777903,2.1863154353287437) circle (1.67pt);
	\draw [fill=black] (9.936563473392043,1.006458015416191) circle (1.67pt);
	\draw [fill=black] (9.936563473392043,0.326458015416191) circle (1.67pt);
	\draw [fill=black] (11.69830000343381,3.8284927765482064) circle (1.67pt);
	\draw [fill=black] (11.69830000343381,2.6165079633386874) circle (1.67pt);
	\draw [fill=black] (11.840886452046693,1.5114629865888318) circle (1.67pt);
	\draw [fill=black] (13.798997748790955,2.6431244389904722) circle (1.67pt);
	\draw [fill=black] (3.0373044219928347,2.4383167978241267) circle (1.67pt);
	\draw [fill=black] (9.945837602255024,4.751753379538426) circle (1.67pt);
	\draw [fill=qqqqff] (5.574252828322784,5.505499646566473) circle (1pt);
	\draw [fill=qqqqff] (7.1839794887412936,5.449346390970478) circle (1pt);
	\draw [fill=qqqqff] (7.165261736875962,4.569612053299891) circle (1pt);
	\draw [fill=qqqqff] (5.574252828322784,4.513458797703897) circle (1pt);
	\draw [fill=qqqqff] (7.240132744337288,3.3529581820533356) circle (1pt);
	\draw [fill=qqqqff] (5.798865850706763,3.165780663400019) circle (1pt);
	\draw [fill=qqqqff] (7.09887593910803,4.441876131589434) circle (1pt);
	\draw [fill=qqqqff] (5.720429241945538,4.804251294245557) circle (1pt);
	\draw [fill=qqqqff] (7.09887593910803,5.280312782440855) circle (1pt);
	\draw [fill=qqqqff] (5.819904776792315,4.5981948292058) circle (1pt);
	\draw [fill=qqqqff] (6.9638734275302605,4.534246271090015) circle (1pt);
	\draw [fill=qqqqff] (5.805693986099919,3.9231822713169455) circle (1pt);
	\draw [fill=qqqqff] (7.283616218109189,4.392138364166045) circle (1pt);
	\draw [fill=qqqqff] (5.684902265214546,4.079500968933312) circle (1pt);
	\draw [fill=qqqqff] (6.992295008915054,5.543212410250198) circle (1pt);
	\draw [fill=qqqqff] (5.677796869868347,3.859233713201159) circle (1pt);
	\draw [fill=qqqqff] (7.084665148415633,3.3121182715438766) circle (1pt);
	\draw [fill=qqqqff] (5.585426730367767,3.3050128761976785) circle (1pt);
	\draw [fill=qqqqff] (7.198351473954808,3.525280131929831) circle (1pt);
	\draw [fill=qqqqff] (7.312037799493982,2.3386791091146857) circle (1pt);
	\draw [fill=qqqqff] (5.727534637291736,3.0563240390807316) circle (1pt);
	\draw [fill=qqqqff] (5.6138483117525615,2.2320981789217087) circle (1pt);
	\draw [fill=qqqqff] (5.656480683829751,2.6371057136550218) circle (1pt);
	\draw [fill=qqqqff] (7.09887593910803,2.3173629230760904) circle (1pt);
	\draw [fill=qqqqff] (7.2338784506858,2.566051760193037) circle (1pt);
	\draw [fill=qqqqff] (5.599637521060164,3.1557995739275104) circle (1pt);
	\draw [fill=qqqqff] (7.447040311071753,2.5021032020772505) circle (1pt);
	\draw [fill=qqqqff] (5.571215939675371,4.022657806163724) circle (1pt);
	\draw [fill=qqqqff] (7.297827008801586,3.148694178581312) circle (1pt);
	\draw [fill=qqqqff] (5.571215939675371,5.351366735902839) circle (1pt);
	\draw [fill=qqqqff] (5.443318823443799,5.379788317287633) circle (1pt);
	\draw [fill=qqqqff] (7.3973025436483635,2.22499278357551) circle (1pt);
	\draw [fill=qqqqff] (5.6493752884835535,1.741825900034014) circle (1pt);
	\draw [fill=qqqqff] (5.5783213350215695,2.4665762253462584) circle (1pt);
	\draw [fill=qqqqff] (7.340459380878777,4.633721805936793) circle (1pt);
	\draw [fill=qqqqff] (5.500161986213387,2.296046737037495) circle (1pt);
	\draw [fill=qqqqff] (7.354670171571173,5.415315294018625) circle (1pt);
	\draw [fill=qqqqff] (7.251699249727316,1.2989406029783652) circle (1pt);
	\draw [fill=qqqqff] (5.6939031849732356,1.5349703097592884) circle (1pt);
	\draw [fill=qqqqff] (7.421640638609579,1.062910896197442) circle (1pt);
	\draw [fill=qqqqff] (5.750550314600656,2.4507655720692707) circle (1pt);
	\draw [fill=qqqqff] (7.166728555286183,1.0440285196549681) circle (1pt);
	\draw [fill=qqqqff] (5.542844172633446,2.9700309269873015) circle (1pt);
	\draw [fill=qqqqff] (5.514520607819735,3.8669438127548097) circle (1pt);
	\draw [fill=qqqqff] (7.478287768236999,1.2706170381646544) circle (1pt);
	\draw [fill=qqqqff] (5.580608925718393,4.669444815809949) circle (1pt);
	\draw [fill=qqqqff] (5.627814867074577,0.8835283190439404) circle (1pt);
	\draw [fill=qqqqff] (7.185610931828657,2.2052946770171102) circle (1pt);
	\draw [fill=qqqqff] (5.505079419548498,0.9590578252138359) circle (1pt);
	\draw [fill=qqqqff] (5.457873478192314,0.7607928715178603) circle (1pt);
	\draw [fill=qqqqff] (7.14784617874371,4.716650757166133) circle (1pt);
	\draw [fill=qqqqff] (5.712785561515709,0.7324693067041496) circle (1pt);
	\draw [fill=qqqqff] (7.261917025269307,0.40330533543717534) circle (1pt);
	\draw [fill=qqqqff] (7.455107535733684,0.27911143585293197) circle (1pt);
	\draw [fill=qqqqff] (5.716392941554292,-0.12106890725185222) circle (1pt);
	\draw [fill=qqqqff] (5.509403108913889,0.09972024756458044) circle (1pt);
	\draw [fill=qqqqff] (5.509403108913889,-0.16246687377993335) circle (1pt);
	\draw [fill=qqqqff] (5.489308121022042,1.6110193015580003) circle (1pt);
	\draw [fill=ffqqqq] (10.77417544787252,1.9104400850374579) circle (1.67pt);
	\draw [fill=ffqqqq] (2,3.3534803788687984) circle (1.67pt);
	\end{scriptsize}
	\end{tikzpicture}
		
\end{center}
	\caption{Illustration of $G\in A_{u,v}^{n,k}$, see Definition \ref{A}. Note: vertex $z$ is not in $\Psi_2(u)$ as it is connected to less than $D$ vertices in $I_3$ and $w$ is not in $ I_2$ as it has more than one parent in $I_1$.}\label{Apic}
\end{figure}

Notice that in Definition \ref{A} either $ \Psi_1(u)$ or $\Psi_1(v)$ may be empty, thus we also define the following sets $B_{w}^{u,v}$ for $w \in \{u,v \}$ using the output of $\mbfs(G,\{u,v\})$
\begin{equation*}
B_{w}^{u,v} :=\left\{G:  \Psi_1(w)\neq \emptyset \right\},\quad \text{and let}\quad B_{u,v} :=B_{u}^{u,v} \cap  B_{v}^{u,v}.
\end{equation*}
Similarly to how we defined $\ca_{u,v}$ define the events 
\begin{equation}\label{calB}\mathcal{B}_{w}^{u,v}: = \left\{\mathcal{G} \in B_{w}^{u,v} \right\}, \qquad \mathcal{B}_{u,v} = \mathcal{B}_{u}^{u,v} \cap \mathcal{B}_{v}^{u,v}.  \end{equation}

\section{The Strong Path Property for $\mathcal{G}(n,p)$}\label{strgnp}
In this section we prove some results needed to successfully apply Corollary \ref{Gnpbdd} to $\cg(n,p)$ in the sparsely connected range \eqref{sparsecon}, in particular we show that $\ca_{u,v}$ holds w.h.p.. To apply the bound on effective resistance in terms of the reciprocals of $\psi$ and $\varphi$ we couple them to $d$ and $d^*$. Lemmas \ref{dom}, \ref{betterprob} and \ref{sdom} will help us achieve this.

\begin{lemma}\label{dom}
	Let $\mathcal{G}\sim  \mathcal{G}(n,p)$, $I_0:=\{u,v \}\subset V$ and $i,k\geq 0$. Run $\mbfs(\mathcal{G},I_0)$. \begin{enumerate}[(i)]
		\item \label{itm:gam4} Then $|S_1|\sim Bin \left(n-2,(1-p)^2\right)$ and $|I_1|\sim Bin \left(n-2,2p(1-p)\right)$.
		\item \label{itm:gam1} Conditioning on $\left\{ x \in I_k\right\}$ and $\left|S_{k}(x)\right|$, then\[{d}^*(x) \sim  Bin(\left|S_{k}(x)\right|,p).\]
		\item \label{itm:gam2}Conditioning on $\left\{ x \in I_k\right\},|S_{k+i}|,|I_{k+i}|$ and ${d}_i^*(x)$, then\[{d}_{i+1}^*(x) \sim  Bin\left(|S_{k+i}|,{d}_{i}^*(x)\cdot p \cdot (1-p)^{|I_{k+i}|-1}\right).\]  
		\end{enumerate}          
	
\end{lemma}

\begin{proof} \textit{Item \eqref{itm:gam4}}: a vertex in $S_0$ is in $S_1$ if it is not connected to either vertex in $I_0$. This happens independently w.p.\ $(1-p)^2$ for each of the $n-2$ vertices in $S_0$ thus $|S_1|\sim Bin \left(n-2,(1-p)^2\right).$ 
	
	A vertex in $S_0$ is in $I_1$ if it is connected to exactly one vertex in $I_0$. This happens independently with probability $2p(1-p)$ for each of the $n-2$ vertices in $S_0$ thus $|I_1|\sim Bin \left(n-2,2p(1-p)\right).$\medskip 
	
	\textit{Item \eqref{itm:gam1}}: recall the definitions of $\Gamma_1^*(x)$ and $S_k(x)$ for $x \in I_k$, given by \eqref{eq:gamdef} and \eqref{eq:WS} respectively. Observe the following relation:
	\[
	\Gamma_1^*(x) =\left(\Gamma_1(x)\cap S_k \right) \backslash \bigcup_{y \in I_{k},\;y\neq x}\Gamma_1(y) = \Gamma_1(x)\cap S_{k}(x).
	\] Since we completely remove the vertices if they clash, and the edges of $\mathcal{G}$ are independent, the order MBFS explores the neighbourhoods of each $y \in I_k$ is unimportant. Assume that we have explored the neighbourhood of every $y \in I_k$ with $y\neq x$. We then know which vertices in the neutral set $S_k$ will not clash if included in $\Gamma_1(x)$ and these are the vertices in $S_{k}(x)$. Since edges occur independently with probability $p$, conditioning on $|S_{k}(x)|$ yields $ {d}^*(x) \sim  Bin(\left|S_{k}(x)\right|,p).$ \medskip 
	
	\textit{Item \eqref{itm:gam2}}: for a vertex $v\in S_{k+i}$ we have $v\in \Gamma_{i+1}^*(x)$ when there is exactly one edge $yv\in E(\mathcal{G})$ where $y \in \Gamma_{i}^*(x)$ and there is no edge of the form $y'v \in E$ where $y' \in I_{k+i}$ and $y'\neq y$. Conditioning on the sizes of $I_{k+i}$ and $\Gamma_i^*(x)$ we see that each $v \in S_{k+i}$ is a member of $\Gamma_{i+1}^*(x)$ with probability ${d}_{i}^*(x)\cdot p \cdot (1-p)^{|I_{k+i}|-1}$. These events are independent, thus \begin{align*}{d}_{i+1}^*(x) &\sim  Bin\left(|S_{k+i}|,{d}_{i}^*(x)\cdot p \cdot (1-p)^{|I_{k+i}|-1}\right), \end{align*}conditional on $|S_{k+i}|,|I_{k+i}|$ and ${d}_i^*(x)$. \end{proof}

Let $x \in I_k$. Choosing $i=0$ in Lemma \ref{dom} \eqref{itm:gam2} gives ${d}^*(x) \sim  Bin\left(|S_k|,p(1-p)^{|I_k|-1} \right) $ conditional on $|S_k|$ and $|I_k|$ whereas Lemma \ref{dom} \eqref{itm:gam1} gives ${d}^*(x) \sim Bin \left(|S_k(x)|,p \right)$ conditional on $|S_k(x)|$. To relate \eqref{itm:gam1} to \eqref{itm:gam2} observe that conditional on $ |S_k|$ and $|I_k|$, $|S_k(x)| \sim  Bin\left(|S_k|,(1-p)^{|I_k|-1}\right).$ Item \eqref{itm:gam2} then follows as if $X \sim B(n, p)$ and, conditional on $X$, $Y\sim B(X, q)$, then $Y$ is a simple binomial variable with distribution $Y \sim B(n, pq)$.

The next two lemmas provide tail estimates on the sizes of $\Gamma_i$ and $\Gamma^*_i$. We prove them by induction where the inductive step comes from applying Chernoff bounds to the binomial distributions described in Lemma \ref{dom}. For Lemma \ref{neighup} this induction shows that w.h.p. the sequence ${d}(u), {d}_2(u),\dots$ is bounded above by the sequence $a_1np, a_2 (np)^2,\dots$ where the $a_i$ satisfy a recurrence relation. This recurrence can later be solved to give bounds on the sequence $a_i$ based on the exceptional probability desired. This strategy is inspired by \cite{Chung01thediameter}.

\begin{lemma}\label{neighup} Let  $\mathcal{G}\sim  \mathcal{G}(n,p)$ where $np = \omega\left(1\right)$. Then for $u \in V$ and $\alpha \in \mathbb{R}$,  $\alpha \geq 6$,
	\[\pr{\left|B_i(u)\right| > \alpha  (np)^i}= o\left(e^{-\alpha np/3}\right).\]
\end{lemma}
\begin{proof}Let $\ce_i:=\{ {d}_{i}(u)  \leq a_i(np)^i\}$ and $\mathcal{H}_i:=\bigcap\limits_{j=0}^i \ce_i$. We shall show
	\[\pr{\mathcal{H}_i^c} \leq  \sum\limits_{j=0}^i e^{-j}\exp\left(-\frac{\lambda^2}{2(1 + \lambda/3\sqrt{np})}\right),\]by induction on $i\geq 0$, where $a_i \geq 0$ is given by the recurrence \[a_{i+1} = a_{i} + \frac{\lambda_i\sqrt{a_{i}}}{(np)^{(i+1)/2}}, \; a_0=1,  \]and $\lambda_i =\sqrt{3i + \lambda^2 }$ for some $\lambda $ specified later. For the base case ${d}_0(u) = 1 = a_0$. Now observe
	\begin{align}\label{reccurance}
	a_i(np)^{i}np + \lambda_i \sqrt{a_i(np)^{i+1}}
	=(np)^{i+1} \!\left( \! a_i + \lambda_i\frac{\sqrt{a_i}}{(np)^{(i+1)/2}}\!\right)= a_{i+1} (np)^{i+1}.
	\end{align}
	Conditional on ${d}_i(u)$ we have  $ {d}_{i+1}(u) \preceq_1 Bin\left({d}_{i}(u)\cdot n,p\right)$. Thus by \eqref{reccurance} above
	\begin{align*}
	\pr{\left(\ce_{i+1}\right)^c\cap \ch_i}&:=\pr{\{{d}_{i+1}(u)> a_{i+1} (np)^{i+1}\} \cap \ch_i} \\
	&\leq \ex{\pr{Bin\left({d}_{i}(u)\cdot n,p\right)> a_i(np)^{i}np + \lambda_i \sqrt{a_i(np)^{i+1}}\;\bigg|{d}_i(u)}\mathbf{1}_{\mathcal{H}_i}}.\end{align*}Now by the Chernoff bounds, Lemma \ref{chb}, we have\begin{align}
	\pr{\left(\ce_{i+1}\right)^c\cap \ch_i}&\leq \ex{\exp\left(-\frac{\lambda_i^2 a_i(np)^{i+1}}{2\left(a_i(np)^{i+1} + \lambda_i \sqrt{a_i(np)^{i+1}}/3\right)}\right)\mathbf{1}_{\mathcal{H}_i}}\notag \\
	&= \ex{\exp\left(-\frac{3i+\lambda^2}{2\left(1 + \lambda_i /3\sqrt{a_i(np)^{i+1}}\right)}\right) \mathbf{1}_{\mathcal{H}_i}}\leq e^{-i}e^{-\frac{\lambda^2}{2\left(1 + \lambda /3\sqrt{np}\right)}}\pr{\ch_i },\label{dsigdsg} \end{align}for $n$ large enough since $a_i\geq 1$ and $ np =\omega(1)$ thus $\lambda_i /\sqrt{a_i(np)^{i+1}} \leq \lambda /\sqrt{np} $. Now observe that $\ch_{i+1} \subseteq \ch_{i}$ and $\ch_{i}$ is the disjoint union of $\ch_{i+1}$ and $ \left(\ce_{i+1}\right)^c\cap \ch_i$. Hence by \eqref{dsigdsg} we have
	\begin{align*}
	\pr{\ch_{i+1}} &= \pr{\ch_i} - \pr{\left(\ce_{i+1}\right)^c\cap \ch_i} = \left(1-  e^{-i}\exp\left(-\frac{\lambda^2}{2\left(1 + \lambda /3\sqrt{np}\right)}\right)\right)\pr{\ch_i }.  \intertext{If we continue iteratively and recall $\ch_0=\{{d}_0(u) \leq 1 \}$, thus $\pr{\ch_0}=1$, then we have} 
	\pr{\ch_{i+1}}&= \prod\limits_{j=0}^{i}\left(1-  e^{-j}\cdot e^{-\frac{\lambda^2}{2\left(1 + \lambda /3\sqrt{np}\right)}}\right)\pr{\ch_0 }\geq  1-  \sum\limits_{j=0}^{i}e^{-j}\cdot e^{-\frac{\lambda^2}{2\left(1 + \lambda /3\sqrt{np}\right)}} .\end{align*}Let $\lambda=k\sqrt{np}$ for any $k\geq 3$ and observe that
	\begin{align}\label{phcomp}
	\pr{\left(\ch_i\right)^c}&\leq \sum\limits_{j=0}^{i} e^{-j}\cdot  e^{-\frac{\lambda^2}{2\left(1 + \lambda /3\sqrt{np}\right)}}=\BO{\exp\left(-\frac{k^2 np  }{2(1 + k/3)}\right)}=O\left(e^{-3knp/4}\right),
	\end{align} where the last equality follows since $\frac{k^2 }{2(1 + k/3)} \geq \frac{3k}{4} $ provided $k\geq 3$, and $np = \omega\left(1\right)$.
	
	 We will show that $a_i \leq 2k$ for all $i$. Since $a_0=1\leq 2k$ assume $a_i \leq 2k$, then by \eqref{reccurance}
	\begin{align*}
	a_{i+1} &=a_i + \frac{\lambda_i\sqrt{a_i}}{(np)^{(i+1)/2}} = 1 + \frac{\lambda_0\sqrt{a_0}}{\sqrt{np}}+ \sum\limits_{j=1}^{i}\frac{\lambda_j\sqrt{a_j}}{(np)^{(j+1)/2}}. \intertext{Recall that $\lambda_i = \sqrt{3i+\lambda^2}$ and observe that $\lambda_0=\lambda=k\sqrt{np}$. Thus we have }
	a_{i+1}&= 1 + k + \sum\limits_{j=1}^{i}\frac{ \sqrt{3j+k^2np}\sqrt{2k}}{(np)^{(j+1)/2}} =1+k+O\!\left((np)^{-1/2}\right)\leq 2k,
	\end{align*}for large $n$. Finally, conditional on $\bigcap\limits_{j=0}^i\{{d}_j(u) \leq 2k(np)^i \}\subseteq \ch_i$ we have 
	\[|B_i(u)| = \sum\limits_{j=0}^i{d}_j(u) \leq \sum\limits_{j=0}^i 2k(np)^i \leq \alpha(np)^i,\] where $\alpha = (2+1/100)k$, now for this $\alpha$ we have $\pr{\left( \ch_i\right)^c} =o\left(e^{-\alpha np/3}\right) $ by \eqref{phcomp}.\end{proof}

\begin{lemma}\label{neighlow} Let $\mathcal{G}\sim  \mathcal{G}(n,p)$, and $i\in \mathbb{Z}$ satisfy $
	1\leq  i \leq \lfloor \log (n) /\log(np) \rfloor -3.$
	Let $\Psi_2$ be defined with respect to $\mbfs(\mathcal{G},\{u,v\})$ for $u,v \in V$ such that $u \neq v$.
	\begin{enumerate}[(i)] 
		\item \label{itm:bd1}Let $c>0$, $np\geq c\log n$. Then $\pr{{d}_i^*(y)< 15 (np)^{i-1}\Big| \;y \in \Psi_2} = o\left(e^{- 4np }\right).$
		\item \label{itm:bd2} If $np = \omega\left( \log n\right)$ then $\pr{{d}_i^*(y)<\tfrac{9}{10} (np)^{i}\Big|\; y \in \Psi_2} = o\left(n^{-K} \right)$ for any fixed $K\geq 0$.  
		\item \label{itm:bd3} If $np -\log n \rightarrow \infty$ then $\pru{\mathcal{C}}{|B_{j}(v)|< 15 (np)^{i-5}} = o\left(n^{-4} \right)$ for any $5 \leq j\leq i -2$.   
	\end{enumerate} 
\end{lemma}

\begin{proof} We will first set up the general framework for a neighbourhood growth bound and then apply this bound under different conditions to prove Items \eqref{itm:bd1}, \eqref{itm:bd2} and \eqref{itm:bd3}. 
	
	Run $\mbfs(\mathcal{G},\{u,v\})$ and let $y \in I_h$, $n_i := |S_{i+h}|$, $p_i:= p \cdot \left(1-p\right)^{|I_{i+h}|-1}$ and $r_i = \prod_{j=i_0}^{i} n_jp_j$. We wish to show that there exists some $i_0 \in \mathbb{Z},\; i_0 \geq 0$ such that for all $i\geq i_0$:
	\begin{equation}\label{recur}
	\pr{{d}_{i+1}^*(y)< a_{i+1} r_i} \leq  (i+1)\exp\left(-\lambda^2/2\right),    
	\end{equation}
	where $a_{i}\geq 0 $ satisfies $ a_{i+1} = a_{i} - \lambda\sqrt{a_{i}}/\sqrt{r_{i}}$, for some initial $  a_{i_0}$ we will find later. Observe
	\begin{align*}
	a_ir_{i-1} n_{i}p_{i} - \lambda \sqrt{a_ir_{i-1} n_{i}p_{i}}
	=\left( \! a_i - \lambda\frac{\sqrt{a_i}}{\sqrt{r_{i}}}\!\right)r_{i} = a_{i+1} r_{i}.
	\end{align*}
	Applying Lemma \ref{dom} \eqref{itm:gam2} and conditioning on $\mathfrak{F}_{i+h}$ yields ${d}_{i+1}^*(y)\sim Bin\left( n_i,{d}_{i}^*(y)p_i \right).$ 
	
		Let $\mathcal{H}_i:=\{ {d}_{i}^*(u) \geq a_ir_{i-1}\}\in \mathfrak{F}_{i+h}$ and assume $\pr{\ch_i^c}\leq ie^{-\lambda^2/2} $. Now by Lemma \ref{chb} \eqref{itm:chern1},

	\begin{align*}
	\pr{{d}_{i+1}^*(y)< a_{i+1} r_{i}} &= \ex{\pr{{d}_{i+1}^*(y)< a_ir_{i-1} n_{i}p_{i} - \lambda \sqrt{a_ir_{i-1} n_{i}p_{i}} \big|\mathfrak{F}_{i+h}} }\\
	&\leq \ex{\pr{Bin\left(n_{i},{d}_{i}^*(y) p_{i}\right)< a_ir_{i} - \lambda \sqrt{a_ir_{i}}\big|\mathfrak{F}_{i+h}} \mathbf{1}_{\mathcal{H}_i}} + \pr{\mathcal{H}_i^c}\\
	&\leq \exp\left(-\lambda^2a_ir_{i}/(2 a_ir_{i})\right) +i\exp\left(-\lambda^2/2\right) =(i+1)\exp\left(-\lambda^2/2\right) .
	\end{align*} The above always holds, however it may be vacuous as if $i$ is too large then $a_i$ may be negative. This can also happen for an incorrect choice of the starting time $i_0$ and initial value $a_{i_0}$. We address this in the application making sure to condition on events where everything is well defined. In this spirit let $\ell:=\lfloor \log(n) /\log(np) \rfloor -h-1$ and define the event \[\mathcal{I}:=\bigcap_{i=0}^\ell \left\{|I_{i+h}| \leq 26(np)^{i+h}\right\}\cap \left\{{d}_i^*(y)\leq 13(np)^i \right\}\cap \left\{\left|S_{i+h}\right|\geq n - 26(np)^{i+h}\right\}.\]Conditioning on the event $\mathcal{I}$ and the filtration $\mathfrak{F}_{i+h}$ for any $i\leq \ell$ ensures that $Bin\left( n_i,{d}_{i}^*(y)p_i \right)$ is a valid probability distribution and $n_ip_i=(1-o(1))np$. By Lemma \ref{neighup} with $\alpha=13$,
	\begin{align}\label{probD}
	\pr{\mathcal{I}^c} &\leq 2\sum\limits_{i=0}^{\ell+2} \pr{|B_{i}(u)| > 13(np)^{i}} =o\left(e^{ - 4np}\right).
	\end{align}\textit{Item \eqref{itm:bd1}}: recall from \eqref{eq:psi2} that if $y \in \Psi_2(u)\cup\Psi_2(v)\subseteq I_2$ then ${d}^*(y) >D$, defined at \eqref{Ddef}. Thus conditional on  $\mathcal{I}\cap\mathfrak{F}_3 $, ${d}_{2}^*(y)\succeq_1 Bin\left( n(1-\lo{1}),Dp(1-\lo{1}) \right)$. If we choose $\lambda = 3\sqrt{np}$, then since in this regime $D= \max\left\{\lceil\frac{50}{c} \rceil, 50\right\}$ applying Lemma \ref{chb} \eqref{itm:chern1} yields
	\begin{align*}
	\pr{{d}_{2}^*(y)< Dn_1p_1/2}&=\ex{\mathbb{P}\left({d}_{2}^*(y)< Dn_1p_1/2\big|\mathfrak{F}_3\right)\left(\mathbf{1}_{\mathcal{I}}+\mathbf{1}_{\mathcal{I}^c} \right)} \leq e^{-Dnp /10}+\pr{\mathcal{I}^c} \leq e^{-\lambda^2/2}.
	\end{align*}Take $i_0=1$ and $a_2=D/3$ since on $\mathcal{I}$ we have $D/2 n_1p_1 \geq Dnp/3$. Now $a_2 \geq \dots \geq a_i$ so on the event $\mathcal{I}$ we have the following for any $3\leq i\leq \lfloor \log(n) /\log(np) \rfloor -3$
	\begin{align*}
	a_i = a_{2} - \sum\limits_{k=2 }^{i-1} \frac{\lambda \sqrt{a_{k}}}{\sqrt{r_{k}}} \geq \frac{D}{3} - (3 +\lo{1})\sqrt{\frac{D}{3np} } \geq 16,
	\end{align*} since conditional on $\mathcal{I}$ we have $r_i = \prod_{j=i_0}^{i}n_jp_j\geq (1-\lo{1})(np)^{i}$ for any $1\leq i\leq \lfloor \log (n) /\log(np) \rfloor -3$. Notice also that ${d}^*(y)>D > 15(np)^0$ so by \eqref{recur}
	\begin{align*}
	\pr{{d}_{i+1}^*(y)< 15 (np)^{i}} &\leq \pr{{d}_{i+1}^*(y)< a_{i+1} r_{i}} + \pr{\mathcal{I}^c} \leq (i+1)e^{-\lambda^2/2 } +o\left(e^{- 4np }\right)=o\left(e^{ - 4np}\right).
	\end{align*}\textit{Item \eqref{itm:bd2}}: for this case on the event $\mathcal{I}$ we have $n_ip_i =(1-o(1))np =\omega(\log n)$ for every $0\leq i \leq \ell $, this is why $D=0$ when $np=\omega(\log n )$ in the definition \eqref{Ddef} as we do not need to rely on the fact that ${d}^*(y)$ greater than some constant to start the branching. 
	
	Fix $K>0$ and let $\lambda=\sqrt{3K\log n}$. As before conditioning on $\mathcal{I}\cap\mathfrak{F}_3 $ ensures that ${d}^*(y)\sim Bin\left( n_0,p_0\right)\succeq_1 Bin\left( n(1-\lo{1}),p(1-\lo{1}) \right)$. By Lemma \ref{chb} \eqref{itm:chern1}, 
	\begin{align*}
	\ex{ \pr{{d}^*(y)< r_0 - (5/4)\lambda\sqrt{r_0} \big|\mathfrak{F}_3}\left(\mathbf{1}_{\mathcal{I}}+\mathbf{1}_{\mathcal{I}^c} \right)} &\leq e^{-25\lambda^2/32} + \pr{\mathcal{I}^c}\leq \exp\left(-\lambda^2/2\right).
	\end{align*}
	Take $i_0=0$, $a_1=19/20$ since on $\mathcal{I}$ we have $r_0 -(5/4)\lambda\sqrt{r_0}  \geq 19np/20$. Now $a_1 \geq \dots \geq a_i$ so on the event $\mathcal{I}$ we have the following for any  $2\leq i\leq \lfloor \log(n) /\log(np) \rfloor -3$ 
	\begin{align*}
	a_i = a_{1} - \sum\limits_{k=1 }^{i-1} \frac{\lambda \sqrt{a_{k}}}{\sqrt{r_{k}}} \geq \frac{19}{20} - (1 +\lo{1})\frac{\sqrt{19\cdot 3K \log n}}{\sqrt{20np}}= \frac{19}{20}-o(1) \geq \frac{9}{10}.
	\end{align*} Thus for any $1\leq i\leq \lfloor \log(n) /\log(np) \rfloor -3, \; K>0$ we have  
	\begin{align*}
	\pr{{d}_{i}^*(y)<9/10(np)^{i}} &\leq \pr{{d}_{i}^*(y)< a_{i} r_{i-1}} + \pr{\mathcal{I}^c}\leq (i+1)e^{-\lambda^2/2 } +e^{- 4np }\leq o\left(n^{-K} \right).
	\end{align*}\textit{Item \eqref{itm:bd3}}: We assume that $np= \theta(\log n)$ for larger $p$ the result follows from stochastic domination. Since $\mathcal{G} \in \mathcal{C}$ there exists a path $u:=u_0,u_1,\dots ,u_4$ with $u_{j-1}u_{j} \in E$ for each $1\leq j\leq 4$. Let $f(u_j)=\left|\left\{v \in V\backslash  \{u_0,\dots ,u_4\}\,:\,u_jv \in E\right\}\right|$. Then, for $D$ from \eqref{Ddef}, by Lemma \ref{chb} \eqref{itm:chern2},
	\begin{align*}
\pr{f(u_j)< D}&=\pr{Bin(n-\ell -1,p)<D} \leq e^{-(n-\ell-1)p}\cdot \left(\frac{e(n-\ell-1)p}{D}\right)^D=e^{-(1-o(1))np}.
\end{align*} 
	Let $\mathcal{E}$ be the event $\left\{{d}(u_{j_0})\geq D \text{ for some } 0 \leq j_0\leq 4 \right\}$. Then as $\{f(u_j)\}_{j=0}^4$ are i.i.d. we have  
	\begin{align}\label{jbd}
	\pru{\mathcal{C}}{\mathcal{E}^c } &\leq \pr{f(u_j)< D}^{5}/\pr{\mathcal{C}} \leq e^{-5(1-o(1))\log n} \leq o\left(n^{-4} \right) . \end{align}  On $\mathcal{E}$ there is some $u_{j_0} \in V$ with $d(u,u_{j_0})=j_0\leq4$  and ${d}(u_{j_0})>D$. We use the stochastic domination ${d}_i(u_{j_0}) \succeq_1 {d}_i^*(u_{j_0})$ to bound the growth of $|B_{i+j_0}(u)|$ from below by that of ${d}_{i}^*(u_{j_0})$, where $u_{j_0} \in I_{j_0}$ is defined with respect to $\mbfs(\cg, \{u,v \} )$ for some  $v \in V$.  Let $\lambda =3\sqrt{\log n}$ and recall $D= \max\left\{\lceil\frac{50}{c} \rceil, 50\right\} $. On $\mathcal{I}$, $r_{j_0+1}=(1-o(1))np $, thus by Lemma \ref{chb} \eqref{itm:chern1}:
	\begin{align*} 
	\ex{\mathbb{P}\left({d}_{j_0+2}^*(u)< dn_{j_0+1}p_{j_0+1}/2\big|\mathfrak{F}_{j_0+1}\right)\mathbf{1}_{\mathcal{I}\cap\mathcal{E}}} &\leq \ex{e^{-dr_{j_0+1} /8}\mathbf{1}_{\mathcal{I}\cap\mathcal{E}}} \leq e^{-\lambda^2/2}.
	\end{align*}
	Take $i_0 =j_0+1$ and $a_{j_0+2}=d/3$ since on $\mathcal{I}\cap\mathcal{E}$ we have $dn_{j_0+1}p_{j_0+1}/2 \geq dnp/3$. Now $a_{i_0} \geq \dots \geq a_i$ and on the event $\mathcal{I}\cap\mathcal{E}$ we have $r_i =(1-o(1)) (np)^{i-j_0}$. Thus  we have the following for any $\varepsilon>0$ and $j_0+3\leq i\leq \lfloor \log(n) /\log(np) \rfloor -j_0-1$:
	\begin{align*}
	a_i = a_{j_0+2} - \sum\limits_{k=j_0+2 }^{i-1} \frac{\lambda \sqrt{a_{k}}}{\sqrt{r_{k}}} \geq \frac{d}{3} - (3 +\lo{1})\sqrt{\frac{d\log n}{3(np)^2} }\geq 16.
	\end{align*} Notice also ${d}_{j_0+1}^*(y)>d > 15(np)^0$. Thus for any $4\leq i\leq \lfloor \log (n) /\log(np) \rfloor -5$:
	\begin{align*}
	\pru{\mathcal{C}}{|B_{i+1}(u)|< 15 (np)^{i-5}} & \leq  \pru{\mathcal{C}}{{d}_{i+1}(u)< 15 (np)^{i-j_0-1}\big|\mathcal{I},\mathcal{E}}   + \pru{\mathcal{C}}{\mathcal{I}^c}+\pru{\mathcal{C}}{\mathcal{E}^c} \\ 
	&\leq  \pr{{d}_{i+1}^*(y)< a_{i+1} r_{i}\big|\mathcal{I},\mathcal{E}}/\pr{\cc} + \pr{\mathcal{I}^c}/\pr{\mathcal{C}}+ \pru{\mathcal{C}}{\mathcal{E}^c}, \end{align*}which is at most $2(i+1)e^{-\lambda^2/2 } +o\left( e^{-4np}\right)+o\left(n^{-4} \right)= o\left(n^{-4} \right)$ by the bounds on $\pr{\cc}$, $\pr{\mathcal{I}}$ and $\pr{\ce^c}$ given by \eqref{eq:probC}, \eqref{probD} and \eqref{jbd} respectively.
	\end{proof}

The next lemma allows us to couple the complex $\Psi_1$ and $\Phi$ neighbourhood distributions to the far more simple $\Gamma^*$ and $\Gamma$-neighbourhood distributions. 
\begin{lemma}\label{sdom}
	Let $\mathcal{G}\sim  \mathcal{G}(n,p)$ where $c\log n  \leq np \leq o(n^{1/3})$ for any $c>0$. Let $I_1$ and the $\varphi , \;\psi , \;\psi_2$ and ${d}^*$-distributions be defined with respect to $\mbfs(\mathcal{G},\{u,v\})$, $u,v \in V$. Then  \begin{enumerate}[(i)]
		\item \label{itm:d1} $\displaystyle{\pr{\varphi(x)\neq {d}^*(x)\big|x \in I_1}=e^{-(1-o(1))np},}$
		\item \label{itm:d2}  $\displaystyle{\pr{\psi(u)\neq {d}^*(u)}=  e^{-(1-o(1))np}},$
		\item \label{itm:d3}  $\displaystyle{\pr{\psi(u)\neq {d}(u) \text{ or }\psi(v)\neq {d}(v)  }\leq 2np^2+ e^{-(1-o(1))np},}$
		\item \label{itm:d5}  $\displaystyle{\pr{\psi_2(u)\neq {d}_2^*(u) \text{ or }\psi_2(v)\neq {d}_2^*(v)  }=  e^{-(1-o(1))np}},$
		\item \label{itm:d4} $\displaystyle{\pr{\psi_2(u)\neq {d}_2(u) \text{ or }\psi_2(v)\neq {d}_2(v)  }\leq  4n^3p^4 +e^{-(1-o(1))np}+  O\!\left(n^2p^3 \right).}$
	\end{enumerate} 
\end{lemma}

\begin{proof}

		\textit{Item \eqref{itm:d1}}: run  $\mbfs(\mathcal{G},\{u,v\})$ and let $x \in I_1$. By the definition \eqref{eq:phi} of $\psi(x)$, if ${d}^*(\tilde{x})>D$ for all $\tilde{x} \in \Gamma_1^*(x)$ then $\varphi(x)={d}^*(x)$, where $D$ is bounded \eqref{Ddef}. Hence for $x \in I_1$,
	\begin{align}\label{uniton}
	\pr{\varphi(x)\neq{d}^*(x)\big|\mathfrak{F}_2}&= \pr{{d}^*(\tilde{x})\leq D \text{ for some }\tilde{x}\in \Gamma_1^*(x)\big|\mathfrak{F}_2} \leq \sum\limits_{\tilde{x} \in \Gamma_1^*(x)}\pr{{d}^*(\tilde{x})\leq D\big|\mathfrak{F}_2}.
	\intertext{If $\tilde{x}\in \Gamma_1^*(x), x \in I_1$ then $\tilde{x}\in I_2$. Knowing the parent of $\tilde{x}$ does not affect the ${d}^*$-distribution conditioned on $\{\tilde{x} \in I_2\}$, so by Lemma \ref{dom} \eqref{itm:gam2} as $|S_2|,|I_2|\in \mathfrak{F}_2$ we have }
	\pr{\varphi(x)\neq{d}^*(x)\big|\mathfrak{F}_2}&\leq{d}^*(x) \pr{Bin\left(|S_2|,p(1-p)^{|I_2|-1}\right)\leq D\Big||S_2|,|I_2|}.\notag\end{align}Let $\mathcal{E}_x^j:=\left\{|I_{j}|\leq 12 (np)^j \right\}\cap \{{d}^*(x)\leq 6np \} \cap\{|S_{j}|\geq n- 12(np)^j \}$ for $x \in I_{j-1}$ and observe
	\begin{equation}\label{probej}\pr{\left(\mathcal{E}_x^j\right)^c}\leq 2\pr{|B_j(u)|>6(np)^j}+ \pr{|B_1(x)|>6np}=  o(e^{-2np})  ,\end{equation} by Lemma \ref{neighup} with $\alpha=6$. Now by Lemma \ref{chb}\eqref{itm:chern2} and Bernoulli's inequality \eqref{bern}
	\begin{align*}
		\pr{\varphi(x)={d}^*(x)\big|\mathfrak{F}_2}\mathbf{1}_{\mathcal{E}_x^1} &\leq  {d}^*(x) e^{-|S_2|p(1-p)^{|I_2|}}\left(\frac{e|S_2|p(1-p)^{|I_2|}}{D}\right)^D\mathbf{1}_{\mathcal{E}_x^1}   \leq e^{-(1-o(1))np},
	\end{align*}the result follows by taking expectation of the above and using \eqref{probej} with $j=1$. 
	\medskip
		
\noindent\textit{Item \eqref{itm:d2}}: for $\tilde{u} \in I_1 $ the distribution of ${d}^*(\tilde{u})$ conditioned on $|S_1|,|I_1|$ is known by \ref{dom} \eqref{itm:gam2}. Thus using the bound $ (1-p)^n \leq \exp(-np)$ we obtain the following for $\tilde{u} \in I_1 $, 
\begin{align}\label{nondiff}
\pr{{d}^*(\tilde{u})= 0\big|\mathfrak{F}_1}&= \pr{Bin\left(|S_1|,p(1-p)^{|I_1|-1}\right)=0\Big|\mathfrak{F}_1} \leq \exp\left(-|S_1|p(1-p)^{|I_1|}\right).
\end{align}
Recall the definition \eqref{eq:psi} of $\Psi_1(u)$. If $\tilde{u}\in \Gamma_1^*(u)$ then $\tilde{u}\in I_1$ and knowing the parent of $\tilde{u}$ does not affect the ${d}^*$-distribution conditioned $\{\tilde{u} \in I_1\}$. Thus, similarly to \eqref{uniton}, for $\tilde{u}\in \Gamma_1^*(u)$,
\begin{align}\label{exp6}
\pr{\psi(u)\neq {d}^*(u)\big|\mathfrak{F}_1} &= \pr{\varphi(\tilde{u})= 0 \text{ for some } \tilde{u}\in \Gamma_1^*(u)\big|\mathfrak{F}_1}\leq d^*(u) \cdot \pr{\varphi(\tilde{u})= 0\big|\mathfrak{F}_1}.\notag \intertext{Now using the coupling inequality \eqref{prk}, yields the following for $\tilde{u} \in I_1 $ }
\pr{\psi(u)\neq {d}^*(u)\big|\mathfrak{F}_1}&\leq{d}^*(u)\pr{{d}^*(\tilde{u})= 0\big|\mathfrak{F}_1} + {d}^*(u)\pr{\varphi(\tilde{u})\neq {d}^*(\tilde{u})\big|\mathfrak{F}_1}.
\end{align}
Recall $\mathcal{E}_u^1$ from \eqref{probej} and observe $\pr{{d}^*(\tilde{u})= 0\big|\mathfrak{F}_1}\mathbf{1}_{\mathcal{E}_x^1}\leq e^{-(1-o(1))np} $ by \eqref{nondiff}. Thus  
\begin{align*}\pr{\psi(u)\neq {d}^*(u)} &\leq e^{-(1-o(1))np} + 6np\pr{\varphi(\tilde{u})\neq {d}^*(\tilde{u})\big|\tilde{u}\in I_1} + o\left(e^{-2np}\right)= e^{-(1-o(1))np},
\end{align*}
by taking the expectation of \eqref{exp6} conditioned on $\mathcal{E}_u^1$ then using the bound on $\pr{{d}^*(\tilde{u})= 0\big|\mathfrak{F}_1}$ above and bounds on $\pr{\varphi(\tilde{u})\neq {d}^*(\tilde{u})\big|\tilde{u}\in I_1}$ and $\pr{(\ce_u^1)^c}  $ from Item (i) and \eqref{probej} respectively.\medskip 

	\noindent \textit{Item \eqref{itm:d3}}: let $I_0=\{u,v\}$ and $\ch:=\{{d}(u)= {d}^*(u), \;{d}(v)={d}^*(v)\}$. By Item \eqref{itm:d2} 
	\begin{align}\label{lasttag}
	\pr{\psi(u)\neq {d}(u) \text{ or } \psi(v)\neq {d}(v) }\notag &\leq \pr{\{\psi(u)\neq {d}(u) \text{ or } \psi(v)\neq {d}(v)\}\cap \ch } + \pr{\ch^c}\notag \\
	& \leq \pr{\psi(u)\neq {d}^*(u)\text{ or }\psi(v)\neq{d}^*(u) } + \pr{\ch^c}\notag \\ &\leq 2e^{-(1-o(1))np} + \pr{\ch^c}.
	\end{align}To calculate $\pr{\ch^c}$ in the above recall the definition $\eqref{eq:gamdef}$ of ${d}^*(u)$ and observe 
	\begin{align}
	\pr{\ch^c}&= \pr{\{uv \in  E \} \cup \{ xu \in E \text{ and } xv \in E \text{ for some }x \in V\backslash I_0\}}\notag \\
	&\leq \pr{uv \in  E} + {\sum}_{x \in V\backslash I_0}\pr{xu \in E \text{ and } xv \in E } = p + (n-2)p^2 .  \label{pce}
	\end{align}Finally, combining \eqref{lasttag} and \eqref{pce} yields the bound \begin{align*}\pr{\psi(u)\neq {d}(u) \text{ or } \psi(v)\neq {d}(v) }&\leq  2e^{-(1-o(1))np}+ p + (n-2)p^2 \leq 2np^2+ e^{-(1-o(1))np} .\end{align*}
	
	\noindent \textit{Item \eqref{itm:d5}}: Define the following events 
	\[\cj:= {\bigcap}_{x \in I_1} \left\{\varphi(x) = {d}^*(x) \right\}, \qquad \cd= \left\{{d}(u),{d}(v) \leq (1+9/\min\{c,1\})np \right\}. \] 
	Notice $\{\psi_2(u)\neq {d}_2^*(u)\text{ or }\psi_2(v)\neq{d}_2^*(u) \}= \{\psi(u)\neq {d}^*(u)\text{ or }\psi(v)\neq{d}^*(u) \}\cup \cj^c $. Now   
	\begin{equation}\label{J}\pr{\cj^c}\leq \ex{|I_1|\pr{\varphi(x) = {d}^*(x) \big|\mathfrak{F}_1}(\mathbf{1}_{\cd}+\mathbf{1}_{\cd^c})} \leq O\!\left(npe^{-(1-o(1))np}\right) + n\pr{\cd^c},  \end{equation} 
	which is $o(e^{-np(1-o(1))})$ by Item \eqref{itm:d1} and since $\pr{\cd^c} = o\left(e^{-np}/n\right)$ by Lemma \ref{chb} \eqref{itm:chern1}.\medskip

	\noindent \textit{Item \eqref{itm:d4}}: let $I_0=\{u,v\}$ and $\cl:=\{{d}_2(u)=  {d}_2^*(u),  \;{d}_2(v)= {d}_2^*(v)\}$. Then\begin{equation} \label{clset}\cl:= \left(\bigcap_{x \in {d}(u), y \in {d}(v)}\!\!\!\!\!\left\{xy \notin E \right\}\!\!\right) \cap \left(\bigcap_{z \in S_1}\!\!\left\{ |\{x \in I_1: xz \in E\}|\leq 1\right\}\!\!\right)\cap \ch,  \end{equation} by the definition $\eqref{eq:gamdef}$ of ${d}_2^*(u)$. Observe that by the Bernoulli inequality \eqref{bern},
	\begin{align*}
	&	\pr{ |\{x \in I_1: xz \in E\}|>1\big|\mathfrak{F}_1} = 1-  {\sum}_{a =0,1}\pr{ |\{x \in I_1: xz \in E\}|=a\big|\mathfrak{F}_1}\\ 
	&\qquad= 1 - (1-p)^{|I_1|} - |I_1|p(1-p)^{|I_1|-1}\leq 1 - (1-|I_1|p) -|I_1|p(1-|I_1|p)= \left(|I_1|p\right)^2.\end{align*}
	By  $\eqref{clset}$, the above estimate on $\pr{ |\{x \in I_1: xz \in E\}|>1\big|\mathfrak{F}_1}$ and $\ch \in \mathfrak{F}_1$, we have 
	\begin{align*}
	\pr{\cl^c\big|\mathfrak{F}_1}&\leq  \sum\limits_{ x \in {d}(u), y \in {d}(v)}\pr{xy \in  E\big|\mathfrak{F}_1}+\sum\limits_{z \in S_1}\pr{ |\{x \in I_1: xz \in E\}|>1\big|\mathfrak{F}_1} +
	\pr{\ch^c\big|\mathfrak{F}_1}\\
	&\leq {d}(u){d}(v)p + |S_1|\left(|I_1|p\right)^2+  \pr{\ch^c\big|\mathfrak{F}_1}.  
	\end{align*}
	Then by the bound on $\pr{\cl^c\big|\mathfrak{F}_1}$ above, the tower property and Cauchy Schwartz inequality,
	\begin{align}\label{ell}
	\pr{\cl^c} &\leq  p\sqrt{\ex{{d}(u)^2}\ex{{d}(v)^2}} + p^2\sqrt{\ex{|S_1|^2}\ex{|I_1|^4}}   +  \pr{\ch^c}=4n^3p^4 +  O\!\left(n^2p^3 \right),  \end{align} the last equality holds by \eqref{pce} and as $|S_1|\sim Bin \left(n-2,(1-p)^2\right), \;|I_1|\sim Bin \left(n-2,2p(1-p)\right)$ by Lemma \ref{dom}, then applying the bound on moments of binomial r.v.'s from \eqref{dmom}.

	Finally let $\cf:= \left\{\psi_2(u)= {d}_2(u),\; \psi_2(v)= {d}_2(v) \right\}$. Then by the definitions \eqref{eq:psi} and \eqref{eq:psi2}  of the vertex sets $\Psi_1(u)$ and $\Psi_2(u)$ we have $\cf:= \left\{\psi(u)= {d}(u),\; \psi(v)= {d}(v) \right\} \cap \cj \cap \cl$.
	Thus
	\begin{align*}
	\pr{\cf^c}& \leq \pr{\psi(u)\neq  {d}(u)\text{ or } \psi(v)\neq  {d}(u) }+ \pr{\cj^c} + \pr{\cl^c}\leq  4n^3p^4 +e^{-(1-o(1))np}+  O\!\left(n^2p^3 \right),
	\end{align*}by Item \eqref{itm:d3}, \eqref{ell} and \eqref{J}.  
\end{proof}

Recall from Definition \eqref{calA} that $\mathcal{A}_{u,v}$ is the event that $\cg(n,p) $ satisfies the strong path property for $u,v \in V$ and some $k\leq \log(np)/\left(2\log n\right) +2$. Recall also Definition \eqref{calB} of $\mathcal{B}_{u,v}$ which is the event the pruned first neighbourhoods $\Psi_1(u),\Psi_1(v)$ are both non-empty.

\begin{lemma}\label{setlem} Let $\mathcal{G}\sim  \mathcal{G}(n,p)$ where $c\log n\leq  np < n^{1/10}$, $c>0$. Then for $u,v \in V, \, u\neq v$ ,\[\pr{\left( \mathcal{A}_{u,v} \right)^c}=  o\left(e^{-7\min \left\{np,\log n \right\}/2}\right)\qquad \text{and} \qquad  \pr{(\mathcal{B}_{u,v})^c}= e^{-(1-o(1))np}.\]
	
\end{lemma}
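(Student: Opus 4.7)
My plan is to treat the two bounds separately, since they involve qualitatively different exponents: the $\cb_{u,v}$ bound requires tracking the sharp leading constant $1$ in $e^{-(1-o(1))np}$, while the $\ca_{u,v}^{n,k}$ bound follows from combining the neighbourhood growth Lemmas \ref{neighup} and \ref{neighlow}.

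For $\pr{(\cb_{u,v})^c}$, I first observe that $\Psi_1(w) = \emptyset$ iff $\Psi_2(w) = \emptyset$, directly from definitions \eqref{eq:psi} and \eqref{eq:psi2}. A union bound reduces the problem to estimating $\pr{\Psi_2(u) = \emptyset}$. Writing $\Gamma_2^u := \bigcup_{y \in \Gamma_1^*(u)} \Gamma_1^*(y)$, the event $\{\Psi_2(u) = \emptyset\}$ means every $z \in \Gamma_2^u$ has $\gamma_1^*(z) \leq d$. I decompose into three sub-cases. If $\Gamma_1^*(u) = \emptyset$, Lemma \ref{dom} (i) gives probability $(1-p(1-p))^{n-2} = e^{-(1-o(1))np}$. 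If $\Gamma_1^*(u) \neq \emptyset$ but some $y \in \Gamma_1^*(u)$ has $\Gamma_1^*(y) = \emptyset$, a first-moment bound $\sum_y \pr{y\in \Gamma_1^*(u)}\pr{\Gamma_1^*(y) = \emptyset \mid y \in \Gamma_1^*(u)}$ yields at most $np \cdot e^{-(1-o(1))np} = e^{-(1-o(1))np}$. Finally, if $\Gamma_2^u \neq \emptyset$ and every $z \in \Gamma_2^u$ has $\gamma_1^*(z) \leq d$, then $\sum_z \pr{z \in \Gamma_2^u}\pr{\gamma_1^*(z) \leq d}$, combined with the easy bound $\pr{z \in \Gamma_2^u} \leq np^2$ and the Poisson-type tail $\pr{\mathrm{Bin}(n,p) \leq d} = O((np)^d e^{-np})$, gives $(np)^{d+2} e^{-np} = e^{-(1-o(1))np}$. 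Summing, $\pr{(\cb_{u,v})^c} \leq 2\pr{\Psi_2(u) = \emptyset} = e^{-(1-o(1))np}$.

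For $\pr{(\ca_{u,v}^{n,k})^c}$, I take $k := \lceil \log n / (2\log np)\rceil + 1$, which lies in the permitted range $k \leq \log n/(2\log np) + 2$ and satisfies $(np)^{2k-1} \geq n\cdot np$. Conditioning on $|\Psi_2(u)|,|\Psi_2(v)| \leq 73(np)^2$, which holds with probability $1 - o(e^{-9np/2})$ by Lemma \ref{neighup} (ii) applied with $k=6$, $i=2$, the failure of $A_{u,v}^{n,k}$ requires some $(x,y) \in \Psi_2(u) \times \Psi_2(v)$ such that either $\Gamma_k^*(x)$ or $\Gamma_k^*(y)$ is small, or there is no edge of $G$ between them. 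For the former, Lemma \ref{neighlow} (i) together with a union bound over the at most $O((np)^2)$ candidates gives $O((np)^2) \cdot o(e^{-4np}) = o(e^{-(4-\epsilon)np})$. For the latter, I use the key structural observation that for distinct $x,y \in I_2$ the sets $\Gamma_k^*(x), \Gamma_k^*(y)$ are disjoint subsets of $I_{k+2}$ (by the MBFS clash removal), and that edges internal to $I_{k+2}$ are not inspected by the MBFS through depth $k+2$, hence remain independent $\mathrm{Bernoulli}(p)$ conditional on $\mathfrak{F}_{k+2}$. Thus $\pr{\text{no } \Gamma_k^*(x)\text{-}\Gamma_k^*(y) \text{ edge} \mid \mathfrak{F}_{k+2}} \leq (1-p)^{225(np)^{2k-2}} \leq e^{-225(np)^{2k-1}/n} \leq e^{-225np}$, and a union bound over at most $O((np)^4)$ pairs yields $o(e^{-(225-\epsilon)np})$. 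Both contributions are $o(e^{-(4-\epsilon)np})$, which is $o(e^{-7\min\{np,\log n\}/2})$ in both regimes $np \leq \log n$ and $np > \log n$.

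The main obstacle is the sharp constant in the $\cb_{u,v}$ estimate: the polynomial prefactors $np$, $(np)^{d+2}$, etc. must be absorbed into $o(np)$, which works precisely because $np = \omega(\log\log n)$ under the hypothesis $np \geq c\log n$. A secondary technical point is ensuring that, when applying Lemma \ref{neighlow} (i) inside a union bound over $x \in \Psi_2(u)$, the conditioning ``$x \in \Psi_2$'' is compatible with summing over vertices; the exchangeability of $\gamma_1^*(x)$ for $x \in I_k$ noted just before Lemma \ref{dom} is what justifies this uniform treatment.
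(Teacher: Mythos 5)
Your proof takes a genuinely different route on both halves of the lemma. For $\pr{(\cb_{u,v})^c}$, the paper applies the coupling Lemma \ref{prk} together with Lemma \ref{sdom}~(ii) to replace $\psi_1(u)$ by $\gamma_1^*(u)$, reducing the whole computation to a known binomial tail; you instead observe (correctly, from \eqref{eq:psi}--\eqref{eq:psi2}) that $\{\Psi_1(u)=\emptyset\}=\{\Psi_2(u)=\emptyset\}$ and union-bound three ``exploration dies early'' sub-cases. This is more elementary and avoids invoking Lemma \ref{sdom}, but your sub-case~3 estimate multiplies $\pr{z\in\Gamma_2^u}$ by $\pr{\gamma_1^*(z)\leq d}$ without verifying independence — the two events interact through the random set $S_2(z)$ and through which vertices are already in $I_1$, and controlling precisely this dependence is what the paper's coupling argument packages cleanly. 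You would need to condition on $\mathfrak{F}_2$ and use Lemma~\ref{dom}~(ii) (or a negative-association argument) to make this rigorous. For $\pr{(\ca_{u,v}^{n,k})^c}$, your route is arguably cleaner than the paper's: using only Lemma \ref{neighlow}~(i) — which holds uniformly for $np\geq c\log n$ and already supplies $o(e^{-4np})$, dominating $o(e^{-7\min\{np,\log n\}/2})$ in both regimes — together with the single formula $k=\lceil\log n/(2\log np)\rceil+1$ removes the need for the paper's case-split choice at \eqref{kchoice} and its amalgamation of Lemmas~\ref{neighlow}~(i) and~(ii). Your key structural observation — that $\Gamma_k^*(x)$ and $\Gamma_k^*(y)$ are disjoint subsets of $I_{k+2}$ whose internal edges remain unexamined by $\mathfrak{F}_{k+2}$ — is exactly the computation the paper performs with $\mathcal{L}_{x,y}$ at \eqref{conbd}. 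One detail worth writing out explicitly: the constraint $k\leq\lfloor\log n/\log(np)\rfloor-3$ from \eqref{greq}, which you need so that Lemma~\ref{neighlow}~(i) applies at $i=k$, is tight at $np=n^{1/10}$ but does hold for your choice of $k$.
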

\begin{proof}
	
	Run $\mbfs(\mathcal{G},\{u,v\})$, $u,v \in V$. For $k\geq 0$ let $\mathcal{T}:=\mathcal{T}_{u,v,k}=\mathcal{T}_1\cap\mathcal{T}_2 $ where
	\[\mathcal{T}_1:=\left\{ \left|S_{k+2}\right|\geq n - n^{5/6} \right\},\quad \mathcal{T}_2:=\left\{|\Gamma_{k}^*(x)\times \Gamma_{k}^*(y)| \geq 4n, \text{for all } (x,y) \in \\ \Psi_2(u)\times\Psi_2(v)\right\}.\]
	On the event $\mathcal{T}_1$ when $\mbfs(\mathcal{G},\{u,v\})$ has run for $k+2$ iterations there is still a lot of the graph yet to explore and the algorithm will run for at least one more iteration.    
	The $k$ in the definition of $\mathcal{T}$ will be the one occurring in the description of $\mathcal{A}_{u,v} $. Set the value of $k$ to be 
	\begin{equation}\label{kchoice}
	k:=k(n,p)=\begin{cases}
	\left\lceil\log \left( \frac{4n}{(15)^2}  \right) /\left(2\log(np) \right) \right\rceil +1 & \text{if } np=c\log n \text{ where }c>0\\
	\left\lceil\log \left( \frac{400n}{81}  \right) /\left(2\log(np)\right) \right\rceil &\text{if }np=\omega\left(\log n \right).
	\end{cases}
	\end{equation} Thus $k\leq \log(np)/\left(2\log n\right) +2$ for large $n$. It remains to show that for $k$ given by \eqref{kchoice}:\[\pr{\cg \notin {A}_{u,v}^{n,k}} = o\left(e^{-7\min \left\{np,\log n \right\}/2}\right).\]

	Let $\mathcal{R}:=\mathcal{R}_{u,v}$ be the event $\left\{\left|\Psi_2(u)\times\Psi_2(v)\right|\leq (12(np)^2)^2 \right\}.$ Since  $\psi_2(u) \leq{d}_2(u)$ for any $u \in V$, we have $	\pr{\mathcal{R}^c}= o\left(e^{-4np}\right)$ by Lemma \ref{neighup} with $\alpha=12$. Thus by the tower property
	\begin{align}
	\pr{\mathcal{T}^c}&\leq \pr{\mathcal{T}_1^c }+\pr{\mathcal{R}^c}+ \ex{\pr{\mathcal{T}_2^c \big| \mathfrak{F}_3}\mathbf{1}_{\mathcal{R}}}  \leq 2\pr{|B_{k+2}(u)|> n^{5/6}/2}+2\pr{|B_{2}(u)|> 12(np)^2}\notag\\
	&\qquad+ 2\ex{\psi_2(u)\psi_2(v)\mathbf{1}_{\mathcal{R}}\pr{{d}_{k}^*(w) < 2n^{1/2} \big| \{w \in  \Psi_2\},\mathfrak{F}_3}}\notag,
	\intertext{as $\{{d}^*(x)\cdot {d}^*(y) < k\}\subseteq \{{d}^*(x) \text{ or }{d}^*(y)< \sqrt{ k}\}$. Provided $np\leq n^{1/10}$ the choice of $k$ given by \eqref{kchoice} satisfies the conditions of Lemma \ref{neighlow}. Thus by Lemmas \ref{neighup}, \ref{neighlow} \eqref{itm:bd1} and \ref{neighlow} \eqref{itm:bd2} we have} \pr{\mathcal{T}^c}&\leq o\left(e^{-9np/2}\right)+ \BO{
		np}^4\cdot \pr{{d}_{k}^*(w) < 2n^{1/2} \big| w \in  \Psi_2} =o\left(e^{-7\min \left\{np,\log n \right\}/2}\right),\label{prrnt}\end{align} 
	where the bound $\pr{{d}_{k}^*(w) < 2n^{1/2} \big| w \in  \Psi_2}\leq e^{-4\min\{np,\log n\}}$ above covers the different values of $np$ and comes from amalgamating Lemmas \ref{neighlow} \eqref{itm:bd1} and \eqref{itm:bd2}, with $K=4$ in the latter. 
	
	Let $\mathcal{L}_{x,y}$ be the following event indexed by $(x,y) \in \Psi_2(u)\times\Psi_2(v)$,  \[\mathcal{L}_{x,y}:=\{x'y'\notin E, \text{for every pair } (x',y') \in \Gamma_k^*(x)\times\Gamma_k^*(y)\}.\] This is independent of $\mathfrak{F}_{k+2}$ as each $x'y'$ has not been checked up to iteration $k+2$, thus
	\begin{align}
	\pr{\mathcal{L}_{x,y}\big|\mathfrak{F}_{k+2}}\mathbf{1}_{\mathcal{T}}&= \pr{x'y'\notin E }^{{d}_k^*(x){d}_k^*(y)}\mathbf{1}_{\mathcal{T}} \leq (1-p)^{4n}  \leq e^{-4np }. \label{conbd}
	\end{align}
	Recall Definition \ref{A} of the strong path property $A_{u,v}^{n,k}$ which we can express as
	\begin{equation*}
	\left\{\cg \notin A_{u,v}^{n,k}\right\} = \bigcup\limits_{(x,y) \in  \Psi_2(u)\times\Psi_2(v) }\left\{\Gamma_k^*(x)= \emptyset\right\}\cup
	\left\{\Gamma_k^*(y)= \emptyset\right\}\cup\mathcal{L}_{x,y}.
	\end{equation*}Observe that for each $i,j\geq 0$ the random variables $\{{d}_j^*(w)\}_{w \in I_i}$ are identically distributed. Recall also that $\Psi_1(u),\Psi_1(v),\mathcal{R} \in \mathfrak{F}_3$. Let $ \mathfrak{P}$ denote $\pr{\left\{\cg \notin A_{u,v}^{n,k}\right\}\cap \mathcal{R}\cap \mathcal{T}}$. By the union bound, tower property and since $\psi_1(u)\psi_1(v)\leq 12^2(np)^4 $ on $\mathcal{R}$, we have 
	\begin{align*}
	\mathfrak{P}&\leq \ex{ \sum\limits_{(x,y) \in  \Psi_2(u)\times\Psi_2(v) }\ex{\left(\mathbf{1}_{\mathcal{L}_{x,y}\cup\{{d}_{k}^*(x)=0\}\cup\{{d}_{k}^*(y)=0\}}\right) \mathbf{1}_{\mathcal{R}}\mathbf{1}_{\mathcal{T}}\Big|\mathfrak{F}_3 } }
	\\& \leq\ex{ 12^2(np)^4\mathbf{1}_{\mathcal{R}}\ex{\left(\mathbf{1}_{\mathcal{L}_{x,y}}+\mathbf{1}_{\{{d}_{k}^*(x)=0\}}+\mathbf{1}_{\{{d}_{k}^*(y)=0\}}\right) \mathbf{1}_{\mathcal{T}}\Big|\mathfrak{F}_3 } }.\intertext{Now since $x,y \in \Psi_2$ and ${d}_j^*(x),{d}_j^*(y)$ are identically distributed for any $j\geq 0$:}
	\mathfrak{P} &\leq \BO{np}^4\cdot\left(\ex{\ex{\mathbf{1}_{\mathcal{L}_{x,y}}\mathbf{1}_{\mathcal{T}}\Big|\mathfrak{F}_{k+2} } }+ 2\pr{{d}_{k}^*(w)=0\big| \{w \in \Psi_2\}}\right).\intertext{By Lemma \eqref{neighlow} \eqref{itm:bd1}, \eqref{conbd}   and since $\mathcal{T} \in \mathfrak{F}_{k+2}$ we have }  
	\mathfrak{P}&\leq  \BO{np}^4\cdot \left(\ex{\pr{\mathcal{L}_{x,y}\Big|\mathfrak{F}_{k+2} }\mathbf{1}_{\mathcal{T}} }+ 2e^{-4\min\{np,\log n\}}\right)= o\left(e^{-7\min \left\{np,\log n \right\}/2}\right).\end{align*}Recall $	\pr{\mathcal{R}^c}= o\left(e^{-4np}\right)$, so by \eqref{prrnt} and the bound on $\pr{\left\{\cg \notin A_{u,v}^{n,k}\right\}\cap \mathcal{R}\cap \mathcal{T}} $ above,
	\begin{align*}
	\pr{\cg \notin A_{u,v}^{n,k}}&\leq \pr{\left\{\cg \notin A_{u,v}^{n,k}\right\}\cap \mathcal{R}\cap \mathcal{T}}  + \pr{\left(\mathcal{R}\cap \mathcal{T}\right)^c} \leq o\left(e^{-7\min \left\{np,\log n \right\}/2}\right). 
	\end{align*}For $ \pr{(\mathcal{B}_{u,v})^c}$ we apply the coupling inequality \eqref{prk} to the $\psi$ and ${d}^*$-distributions:
	\begin{align*}
	\pr{(\mathcal{B}_{u,v})^c} &\leq  \pr{\psi_1(u)=0 }  +\pr{\psi_1(v)=0 }\leq  2\pr{{d}^*(u)=0 } + 2\pr{\psi_1(u)\neq {d}^*(u)}. \notag \intertext{Then since $\pr{\psi_1(u)\neq {d}^*(u)}$ is known by Lemma \ref{sdom} we have}
	\pr{(\mathcal{B}_{u,v})^c}&\leq 2\pr{{d}^*(u)=0 \left. \right|{d}^*(v)\leq 6np} + 2\pr{{d}^{\phantom{*}}(v) >6np} +2e^{-(1-o(1))np}.\notag \\&\leq 2(1-p)^{n-6np-1}  + o\left(e^{-2np}\right) +e^{-(1-o(1))np} = e^{-(1-o(1))np} \notag,
	\end{align*}by applying Lemma \ref{dom} \eqref{itm:gam1} to the first term and Lemma \ref{neighup} \eqref{itm:bd1} with $\alpha =6$ to the second.\end{proof}

The following crude but resilient bound on $R(i,j)$ is useful when conditioning on $ \ca_{i,j}^c$. 

\begin{lemma}\label{basicres} Let $\mathcal{G}\sim  \mathcal{G}(n,p)$ be such that $np -   \log n  \rightarrow \infty $. Then for $i,j \in V$,
	\[\pru{\mathcal{C}}{R(i,j) > 3\log n /\log(np)}=o(n^{-4}). \] 
\end{lemma}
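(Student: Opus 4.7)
My plan is to reduce the claim to a diameter bound and then apply the neighbourhood growth lemma.

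\textbf{Step 1 (deterministic reduction).} For any connected graph $G$ and $i,j\in V(G)$, routing one unit of flow along any shortest $i$-$j$ path gives a unit $i$-$j$ flow with energy exactly $d_G(i,j)$ (each of the $d_G(i,j)$ edges carries flow $\pm 1$, contributing $1$ to the energy under unit conductances). By the variational characterisation \eqref{resdef} we obtain the deterministic bound $R(i,j)\le d_G(i,j)$. Hence it suffices to prove
\[
\pru{\mathcal{C}}{d_G(i,j) > 3\log n/\log(np)} = o(n^{-4}).
\]

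\textbf{Step 2 (neighbourhoods are large).} Set $r:=\lfloor \log n/\log(np)\rfloor - 5$. The hypothesis $np\le n^{1/10}$ gives $\log n/\log(np)\ge 10$, so $r\ge 5$ and $r\le \lfloor\log n/\log(np)\rfloor-5$, i.e.\ $r$ is a valid index for Lemma~\ref{neighlow}~\eqref{itm:bd3}. Applying that lemma separately to $i$ and $j$ and taking a union bound, with conditional probability $1-o(n^{-4})$ we have $|B_r(i)|,|B_r(j)| \ge 15(np)^{r-5}$. Call this event $\mathcal{G}_0$.

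\textbf{Step 3 (conclude the distance bound).} On $\mathcal{G}_0$, either the two balls intersect, so $d_G(i,j)\le 2r$, or they are disjoint, in which case I would expose $B_r(i)$ and $B_r(j)$ by sequential BFS and argue that the unprobed edges between them (which remain i.i.d.\ Bernoulli$(p)$ conditional on the two exposures) contain at least one edge with conditional probability $1-o(n^{-4})$; this gives $d_G(i,j)\le 2r+1$. Either way, since $\log n/\log(np)\ge 10$,
\[
d_G(i,j) \le 2r+1 \le 2\log n/\log(np)-9 \le 3\log n/\log(np),
\]
and combined with Step 1 this yields the claim.

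\textbf{Main obstacle.} The delicate point is the edge-existence step in the disjoint case. The lower bound $15(np)^{r-5}$ on the ball sizes is very strong when $np$ is close to $\log n$ (then $|B_r|\gtrsim n/(\log n)^{11}$, so $p\,|B_r(i)|\,|B_r(j)|\gg \log n$ and the bound $(1-p)^{|B_r(i)|\,|B_r(j)|}=o(n^{-4})$ is easy), but it degenerates to a constant when $np$ approaches $n^{1/10}$, so the naive edge-existence bound fails uniformly. To handle this, I would either split into cases and, for $np=\omega(\log n)$, replace Lemma~\ref{neighlow}~\eqref{itm:bd3} by Lemma~\ref{neighlow}~\eqref{itm:bd2} (which gives $\gamma_i^*\ge \tfrac{9}{10}(np)^i$ with super-polynomially small failure probability), or bootstrap the ball growth one or two extra steps beyond radius $r$ using that $\Gamma_r(i)$ is itself large, thereby ensuring $|B_r|\,|B_r|\gg \log n/p$ and making the conditional edge-existence argument work uniformly in the allowed range of $np$.
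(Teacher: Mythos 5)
Your overall plan — bound $R(i,j)$ by $d(i,j)$ via a unit flow along a shortest path, then show the distance is small by combining a neighbourhood-growth lower bound with an edge-existence argument on the probed balls — is exactly the paper's strategy, and your Step~1 is correct and complete. The genuine gap is precisely where you flag it, and it is caused by your choice of radius: taking $r := \lfloor \log n/\log(np)\rfloor - 5$ is too deep. Lemma~\ref{neighlow}\eqref{itm:bd3} only yields $|B_r(u)| \ge 15(np)^{r-5}$, which loses a factor of order $(np)^{10}$ against $n$; with your $r$, the product $p\,|B_r(i)|\,|B_r(j)|$ is only guaranteed to be of order $n(np)/(np)^{20}$, which tends to $0$ as $np$ grows toward $n^{1/10}$, so the conditional edge-existence bound $(1-p)^{|B_r(i)||B_r(j)|}$ is nowhere near $o(n^{-4})$ across the full range. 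Moreover, your proposed repair via Lemma~\ref{neighlow}\eqref{itm:bd2} does not apply directly: that item concerns $\gamma_i^*(y)$ for $y\in\Psi_2$ as produced by the $\mathrm{MBFS}$ exploration rooted at $I_0=\{u,v\}$, not the ball $|B_r(u)|$ around an arbitrary root vertex $u$, so some additional translation would still be required.

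The paper's remedy is simpler than either of your proposed workarounds and needs no case split: pick the radius roughly half as large, namely $k := \left\lceil \log\!\big(4n/15^2\big)/(2\log(np)) \right\rceil + 5$. With this choice, $15(np)^{k-5}\ge 2\sqrt n$, so on the good event $\mathcal{J}_{i,j}:=\{|B_k(i)|\,|B_k(j)| \ge 4n\}$ (whose complement has $\mathbb{P}_{\mathcal{C}}$-probability $o(n^{-4})$ by Lemma~\ref{neighlow}\eqref{itm:bd3}), the probability that there is no edge between the disjoint balls is at most $(1-p)^{4n}\le e^{-4np}\le n^{-4}$, uniformly for all $np\ge \log n$. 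Since $2k+1\le 3\log n/\log(np)$ for $n$ large, this gives the stated bound in one pass. One further technical point worth noting: the edge-existence estimate has to be made under the unconditional measure $\mathbb{P}$ — where the unprobed edges between $B_k(i)$ and $B_k(j)$ really are independent Bernoulli$(p)$ — and then divided by $\pr{\mathcal{C}}$, rather than directly under $\mathbb{P}_{\mathcal{C}}$ as your Step~3 implicitly does; the paper performs exactly this bookkeeping.
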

\begin{proof}
	Since $G\in \mathcal{C}$ the effective resistance between two points is bounded from above by the graph distance. Let $\mathcal{J}_{i,j}:=\left\{|B_k(i)|\cdot |B_k(j)| \geq 4 n\right\}$ where $k:= \left\lceil \log\left(\frac{4n}{15^2}\right)/(2 \log np)\right\rceil +5.$ Using Lemma \ref{neighlow} \eqref{itm:bd3} to bound $\pru{\mathcal{C}}{\mathcal{J}_{i,j}^c}$, since $5 \leq k \leq   \lfloor \log (n) /\log(np) \rfloor -5$ when $n$ large:
	\begin{align*}
	\pru{\mathcal{C}}{R(i,j) > 2k+1}&\leq \pru{\mathcal{C}}{d(i,j)>2k+1\Big|\mathcal{J}_{i,j} } + \pru{\mathcal{C}}{\mathcal{J}_{i,j}^c}\\
	&\leq \pr{xy \notin E,\; \forall  (x,y) \in B_k(i)\times B_k(j), B_k(i)\cap B_k(j)= \emptyset \Big| \mathcal{J}_{i,j} }/\pr{\mathcal{C}} \\ &\qquad\qquad\quad +  2\pru{\mathcal{C}}{|B_k(j)|< 2 \sqrt{n}} \leq 2(1-p)^{4n } + 2\,o\left(n^{-4} \right)=o\left(n^{-4} \right).
	\end{align*}  The result follows since $2k+1 = 2\left(\left\lceil \log\left(\frac{4n}{15^2}\right)/(2 \log np)\right\rceil +5\right) +1 \leq \frac{3 \log n}{\log(np)}$ for large $n$.\end{proof}

\section{Proof of Theorems \ref{resconc}, \ref{resconc2} \& \ref{bolthom}}\label{ResThmSec}
Most of our main theorems result from Corollary \ref{Gnpbdd}, Lemma \ref{supbddlem} below simplifies this application.   

\begin{lemma}\label{supbddlem} Let $\mathcal{G}\sim  \mathcal{G}(n,p)$ where $\log n + \log\log\log n \leq np < o(n^{1/3})$. Let $\alpha\geq 1$ and $\Psi_1(u),\Psi_1(v)$ be defined with respect to $\mbfs(\mathcal{G},\{u,v\}), \; u,v \in V$. Then
	\begin{enumerate}[(i)]\item \label{itm:superr} $\displaystyle \mathbb{E}_{\mathcal{C}}\!\left[\left(\sup_{x\in \Psi_1(u)}\frac{\mathbf{1}_{\mathcal{B}_{u}^{u,v}}}{\varphi(x)} \right)^{\alpha}\right]^{1/\alpha}=  O\!\left(\frac{1}{np}\right)$.
	\item \label{itm:simres2} If $c\log n \leq np \leq n^{1/10}$, for any fixed $c>0$, then \[ \pr{R\left(u,v\right) >\left(\frac{1}{\psi(u)} + \frac{1}{\psi(v)} \right)\!\!\left(1 +  9\epn  \right) }= o\left(e^{-np/4} \right)+o\left(n^{-7/2} \right).\]
	\end{enumerate}

\end{lemma}

\begin{proof}\noindent\textit{Item \eqref{itm:superr}}: Let 	$\mathfrak{P}_u:= \pr{\inf_{x\in \Psi_1(u)}\varphi(x) < K_p}$ and $ K_p:= \left(1 - \sqrt{3}/2\right) np(1-66np^2)$. 
	
	Recall $\Psi_1(u) \subset I_1$ for $u \in I_0$ and observe that,
	\begin{align} 
	\mathfrak{P}_u&\leq \ex{\pr{\inf_{x\in I_1}{d}^*(x) < K_p\big|\mathfrak{F}_1}}  +  \ex{\pr{\varphi(x)\neq {d}^*(x) \text{ for some }x \in I_1 \big|\mathfrak{F}_1}}, \notag \intertext{by the tower property. Applying the union bound since $I_1 \in \mathfrak{F}_1$  yields } 
	\mathfrak{P}_u  &\leq \ex{|I_1|\pr{ {d}^*(x) < K_p \big|x\in I_1,\mathfrak{F}_1}}  +  \ex{|I_1|\pr{\varphi(x)\neq {d}^*(x) \big|x \in I_1,\mathfrak{F}_1}}.\notag
	\intertext{Let $a:= 13/\min\{c,1\}$ where $c>0$ is the largest real number such that  $np \geq  c \log n$. Separate the expectations into parts $\left\{ |I_1| \leq anp\right\}$ and $\left\{ |I_1| >anp\right\}$ to give}
	\mathfrak{P}_u &\leq anp \ex{\pr{ {d}^*(x) < K_p \big|x\in I_1,\mathfrak{F}_1}}  +  anp\pr{\varphi(x)\neq{d}^*(x) \big|x \in I_1} + 2n\pr{|I_1| >anp}.\notag\intertext{Since  ${d}^*(x)\sim Bin(|S_1(x)|,p)$ by Lemma \ref{dom}, $S_1(x) \in \mathfrak{F}_2$, and by Lemma \ref{sdom} \eqref{itm:d1} we have }
	\mathfrak{P}_u  &\leq anp\ex{\pr{Bin(|S_1(x)|,p) < K_p\big|\mathfrak{F}_2 }} + a(np)e^{-(1-o(1))np}+ 4 n\pr{{d}(u) >a np/2} \notag.
	\notag\intertext{Applying Lemma \ref{chb} to the first term and Lemma \ref{neighup} with $\alpha=a$  to the last yields}
	\mathfrak{P}_u &\leq anp\ex{e^{-\left(|S_1(x)|p-K_p\right)^2/\left(2|S_1(X)|p\right) }} +  anp\cdot e^{-(1-o(1))np}+ 4 n\cdot o\left(e^{-anp/6 }\right).\notag  \intertext{Once again by separating the expectation into the two disjoint parts $\left\{ |S_1(x)| \leq n-12(np)^2\right\}$ and $\left\{ |S_1(x)| >n-12(np)^2\right\}$ the applying Lemma \ref{neighup} with $\alpha=6$ we have the following}
	\mathfrak{P}_u &\leq anp\cdot  e^{- np/3 } + 2\pr{|B_2(u)| > 6(np)^2}+ o\left(e^{-np }\right) = o\left(e^{-np/4}\right).	\label{Ptag}
	\end{align}
	Recall $\sup_{x\in \Psi_1(u)}\mathbf{1}_{\mathcal{B}_{u}^{u,v}}/\varphi(x)<1/D$, see \eqref{eq:phi} \& \eqref{eq:psi}. Bernoulli's inequality \eqref{bern} provides
	\begin{align}\label{expectationbdd}
	\mathbb{E}_{\mathcal{C}}\!\left[\left(\sup_{x\in \Psi_1(u)}\frac{\mathbf{1}_{\mathcal{B}_{u}^{u,v}}}{\varphi(x)} \right)^{\alpha}\right]^{1/\alpha}&\leq \left(\frac{1}{(K_p)^\alpha} + \frac{1}{D^\alpha\pr{\cc}}\pr{\inf_{x\in \Psi_1(u)}\varphi(x) < K_p} \right)^{1/\alpha}\notag \\&\leq \frac{1}{(K_p)}\left(1 + (K_p)^\alpha e^{-np/4}/D^\alpha\pr{\cc} \right)^{1/\alpha}=  O\!\left(\frac{1}{np}\right).
	\end{align}
	Note that the bound \eqref{Ptag} on $\mathfrak{P_u}$ holds for any $np \geq c \log n$, $c>0$ fixed. The restriction on $np$ to $np \geq \log n$ comes from \eqref{expectationbdd}, where we need $\pr{\cc}$ bounded below by a constant.\medskip 
	
	\noindent\textit{Item \eqref{itm:simres2}}: Observe that $K_p=\left(1 - \sqrt{3}/2\right) np(1-66np^2) \geq np/9 $ for large $n$ and $k\leq np \epn $ in the definition of event $\mathcal{A}_{u,v}$ \eqref{calA}. Thus  conditional on $\{\varphi(x) \geq K_p \text{ for all }x \in \Psi_1\}\cap\mathcal{A}_{u,v}  $, 
	\[R\left(u,v\right) \leq  \left(1/\psi(u) + 1/\psi(v) \right)\left(1 + (k+2)/K_p \right) \leq \left(1/\psi(u) + 1/\psi(v) \right)\left(1+9\epn\right),\]by Corollary \ref{Gnpbdd}. The result follows since we have $\pr{\inf_{x\in \Psi_1(u)}\varphi(x) < K_p}= o\left(e^{-np/4} \right) $  by \eqref{Ptag} and $\pr{\left(\mathcal{A}_{u,v} \right)^c}=o\left(n^{-7/2}\right)$ by Lemma \ref{setlem}.
\end{proof}

\subsection{Proof of Theorems \ref{resconc} \& \ref{resconc2}}                                          

To begin let $i,j \in V$ and define the following three functions for ease of notation 
\[r_{i,j}:= 1/{d}(i) + 1/{d}(j),  \qquad f_{i,j} :=1/{d}(i)^2 +1/{d}(j)^2,  \qquad g_{i,j}:= \epn \cdot r_{i,j}.\]Define the four events $\ci, \ch$ $\cf$ and $\cl$ as follows, where event $\cl$ is w.r.t. $\mbfs(\cg, \{i,j \} )$,  
\begin{equation}\label{heeq}
\begin{alignedat}{2}
\ci &:= \bigcap_{\{i,j \}\subseteq V}\left\{\left|R(i,j) - \frac{2}{np}\right| \leq  \frac{7\sqrt{\log n}}{(np)^{3/2}} \right\}, \qquad \ch&:= \left\{\left|R(i,j) - r_{i,j}\right| \leq 9\left(2f_{i,j}+ g_{i,j}\right)\right\}, \\
 \cf&:= \left\{ R(i,j)\leq \left(\frac{1}{\psi(i)} + \frac{1}{\psi(j)} \right)\!\! \left(1+ 9\epn \right)\right\},\qquad \cl&:= \bigcap_{z\in \{i,j \}}\left\{ \frac{1}{d^*(z) } \leq \frac{1}{d(z) } + \frac{16}{d(z)^2  } \right\}.
\end{alignedat}
\end{equation}
\begin{lemma}\label{betterprob} Let $c\log n\leq np \leq n^{1/10}  $ where $c>0$. Then $\pr{\cl^c}= o(1/n^3 ) + o(e^{-np/3})$ 
\end{lemma}

\begin{proof}Let $I_0=\{i,j \}$ and recall that $\Gamma^*(i)=\Gamma(i)\backslash(\{j \}\cap \Gamma(j))$, see \eqref{eq:gamdef} and \eqref{second*def}. Let $\ce_a = \{ d^*(v)\leq d(v) -a\}$. For each $z \in \{j \}\cap \Gamma(y)$, provided $z\neq i$, we have $iz \in E$ independently with probability $p$. Thus  $\pr{\ce_a \mid ij\notin E,  d(j)=k} = \pr{Bin(k,p) \geq  a }$ and similarly $  \pr{\ce_a \mid ij\in E, d(j)=k} = \pr{Bin(k-1,p) \geq a -1}$. Thus we have 
	\[\pr{\ce_a} \leq \pr{Bin(k,p) \geq  a -1} + \pr{d(x)\geq k}.\] Let $k= 3a\max\{\log n, np  \} $ and apply Lemma \ref{chb} \eqref{itm:chern2} and \eqref{itm:chern1} respectively to give\begin{align}\label{split}\pr{\ce_a}  &\leq e^{-kp}\left(\frac{ekp}{a-1}\right)^{a-1} +e^{-\frac{k^2}{2(np+k/3)}}\leq \left(\frac{2e(np)^2}{n} \right)^{a-1} + n^{-3a^2/2} =  n^{-a/2}.  \end{align}
	Conditional on the event $\ce_j^c\cap \{d(v) \geq 2a \} $ we have 
	\[\frac{1}{d^*(x) } \leq \frac{1}{d(x) - j }\leq \frac{1}{d(x) } + \frac{j}{d(x)^2 -jd(x)  } \leq \frac{1}{d(x) } + \frac{2j}{d(x)^2 }. \] If we let $a=8$ then $ \pr{d(v)< 16}\leq o(e^{-np/3})$ by Lemma \ref{chb} and $\pr{\ce_8} = o(1/n^3) $  by \eqref{split}. 
\end{proof}

\begin{proof}[Proof of Theorem \ref{resconc}] To begin, by Lemma \ref{lowjoh} we have
	\begin{align*}
	R(i,j) - r_{i,j}&\geq -\left(\frac{1}{{d}(i)^2+{d}(i)} +\frac{1}{{d}(j)^2+{d}(j)}\right)> -\left(\frac{1}{{d}(i)^2} +\frac{1}{{d}(j)^2}\right)=-f_{i,j}. 
	\end{align*} Let $\cb$ be the event $\{\psi(i)  =  {d}^*(i),\psi(j)  =  {d}^*(j)  \}$. Conditional on $\cb\cap\cl\cap \cf$ we have
	\[R(i,j) - r_{i,j} \leq \frac{16}{d(i)^2} + \frac{16}{d(j)^2}+ \left(\frac{1}{d(i)} + \frac{1}{d(j)}+ \frac{16}{d(i)^2} + \frac{16}{d(j)^2} \right)9\eps_n \leq 9(2 f_{i,j}+g_{i,j}).\] 	
	Bounding $\pr{\cb^c}$, $\pr{\cl^c}$ and $\pr{\cf^c}$ using Lemmas \ref{sdom} \eqref{itm:d2}, \ref{betterprob},  and \ref{ubredux} \eqref{itm:simres2} respectively:\begin{align*}\pr{\ch^c}  &\leq  2 e^{-(1-o(1))np} +o(1/n^3 ) + o(e^{-np/3})+  o\!\left(e^{-np/4} \right)+o\!\left(n^{-7/2} \right), \end{align*}which is $o\!\left(e^{-np/4} \right)+ o\!\left(n^{-3}\right) $ as required.\end{proof}

For $S\subseteq V$ and $\lambda:=\lambda(n)=o(np)$ let $ \ce(S,\lambda)$ be the event\begin{align}\label{eventelam}
\ce(S,\lambda):=\bigcap_{u\in S}\left\{\left|{d}(u)- np\right| \leq  \sqrt{\lambda np }\right\}, \end{align}
for which we have $\pr{\ce(S,\lambda)^c}  \leq 2|S|e^{-\lambda(n)/3} $ by the union bound and Lemma \ref{chb}.

\begin{proof}[Proof of Theorem \ref{resconc2}]
	\noindent \textit{Item \eqref{itm:co2}}: conditional on the event $\ce\left(\{i,j\},\epn^2 np/19\right) \cap \ch$ we have
	\begin{align}\label{triandge}
	\Big|R(i,j)-\frac{2}{np}\Big|&\leq \Big|R(i,j)-r_{i,j}\Big| +\Big|r_{i,j}- \frac{2}{np}\Big|\leq \frac{19\epn }{2np}+\frac{2\sqrt{\lambda(n)}}{(np)^{3/2}}\leq \frac{10\epn }{np},
	\end{align}
	since  $9\left(2f_{i,j}+ g_{i,j}\right) \leq 19\epn /\left(2np\right)$ on $\ce(\{i,j\},\lambda) \cap \ch$. Thus by Theorem \ref{resconc}: \begin{align*}
	\pr{\left|R(i,j) - 2/(np)\right| > 10\epn /(np)} &\leq \pr{\left(\ch\cap\ce\right)^c} \leq \pr{\ch^c}+ 4e^{-\lambda(n)/3}\leq e^{-\epn^2 np/60}.                                                                                                                                                        
	\end{align*}\noindent \textit{Item \eqref{itm:co3}}: Recall the definition of $\ch $ from \eqref{heeq} and notice we supressed dependence on $i,j$ that is $\ch :=\ch_{i,j}$. Similarly to \eqref{triandge}, conditional on $\left(\bigcap_{\{i,j\} \subseteq v }\ch_{i,j}\right)\cap \ce\left(V,9\log n\right) $ we have 
	\[\left|R(i,j)- \frac{2}{np}\right|\leq 2\cdot \frac{3\sqrt{\log n}}{(np)^{3/2}} +  \frac{19\epn }{2np} \leq \frac{7\sqrt{\log n}}{(np)^{3/2}}.\] 
	Recall event $\ci$ from \eqref{heeq}. The result now follows since by Theorem \ref{resconc} and \eqref{eventelam} we have  \[\pr{\ci^c}\leq  n^2\left( o\!\left(e^{-np/4} \right)+ o\!\left(n^{-3}\right)\right) + 2ne^{-3\log n} = o\left(\frac{1}{n}\right) .\]
	\noindent \textit{Item \eqref{itm:co4}}: Recall that $m:=|E|$ and let $\mathcal{M}$ be the event $\left\{\left|m- \binom{n}{2}p\right| \leq  3\sqrt{\log(n)\binom{n}{2}p }  \right\}.$

	Conditional on $\ce\left(V,9\log n \right) \cap \mathcal{M} \cap \mathcal{I}$ we have the following for any $\{i,j\} \subseteq V$,\begin{align*}
	|mR(i, j) -n| \leq 4n\sqrt{\frac{\log n  }{np}}, \quad \text{and} \quad  \left|\sum\limits_{u\in V} \frac{{d}(u)}{2}\left[R(j,u)-R(u, i)\right]\right|\leq 8n\sqrt{\frac{\log n  }{np}},
	\end{align*} thus $	\left| h(i, j) -n \right|\leq 12 \sqrt{\log(n) /np } $ by Tetali's formula \eqref{eq:hit} and the Triangle inequality. Now 
	\[\pr{\left(\ce\left(V,9\log n \right) \cap \mathcal{M} \cap \mathcal{I}\right)^c}=  o\left(1/n^3\right) +  o\left(1/n^3\right) +o\left(1/n\right) = o\left(1/n\right) \]  
	by \eqref{eventelam}, Lemma \ref{chb}, since $m\sim Bin\left(\binom{n}{2},p \right)$  and Theorem \ref{resconc2} \eqref{itm:co3} respectively. \end{proof}

\subsection{Proof of Theorem \ref{bolthom}}

Recall that $paths_2(i,j,l)$ is the maximum number of paths of length at most $l$ between vertices $i$ and $j$ that are vertex disjoint on $V\backslash \left(B_1(i)\cup B_1(j)\right)$ of a graph $G$.

\begin{proof}[Proof of Theorem \ref{bolthom}]\textit{Item \eqref{itm:bolthom1}}: For $i,j \in V$ let $\ce_{i,j}$ be the event that there is no path from $i$ to $j$ of length less than $4$. Then by over-counting the number of paths we have
	\begin{equation}\label{E}
	\pr{\ce_{i,j}^c  }\leq \sum_{l=1}^3 \pr{\text{path from }i\text{ to }j\text{ of length }l  }\leq p + (n-2)p^2 +\binom{n-2}{2}p^3 \leq n^2p^3.  
	\end{equation}
	Conditional on $\ce_{i,j}$ every path between $i$ and $j$ must pass through at least one vertex from each of ${d}_2(i)$ and ${d}_2(j)$, though these vertices may not be distinct. So there cannot be more than $\min\{{d}_2(i),{d}_2(j) \}$ paths between $i,j\in V$ which are vertex disjoint on $V^*:= V\backslash \left( B_1(i)\cup B_1(j)\right)$ since $\Gamma_2(i) \cup \Gamma_2(j) \subseteq V^*$. Thus conditional on $\ce_{i,j}$ for any $l\geq 0$ we have 
	\begin{equation} \label{pathsE} paths_2(i,j,l) \leq \min\{{d}_2(i),{d}_2(j) \}. \end{equation}
	To bound $paths_2(i,j,l)$ from below we construct $\min\left\{\psi_2(i), \psi_2(j) \right\}$ vertex disjoint paths between $i$ and $j$ conditional on $\ca_{i,j}$, Definition \ref{A}, then couple $\psi_2(i)$ to $d_2(i)$ and $\psi_2(j)$ to $d_2(j)$. 
	
	For the path construction condition on $\mathcal{A}_{i,j}$ and w.l.o.g. assume $\psi_2(i) \leq \psi_2(j)$. Take any subset $\Psi_2(j)^*\subseteq \Psi_2(j)$ with $\psi_2(i)$ elements and any bijection $M$ between $\Psi_2(i)$ and $\Psi_2(j)^*$. Given any pair $(x,y)$ in $M$, conditional on $\mathcal{A}_{i,j} $, there is some $k$ and some pair $(x_k,y_k) \in \Gamma_k^*(x)\times \Gamma_k^*(y)$ such that $x_ky_k \in E$. We define the path $P_{x,y}:=i,i_x,x,x_1,\dots,x_k,y_k,y_{k-1},\dots,y,j_y,j$, where $x,x_1,\dots,x_k$ is the unique path from $x$ to $x_k$ in the tree $T_k(x):= \cup_{i=0}^{k}\Gamma_i^*(x)$ and $i_x$ is the unique vertex in $\Gamma_1^*(i)$ connected to $x$. The equivalent descriptions hold for $y,y_1,\dots,y_k \in T_k(y)$ and $j_y \in \Gamma_1(j)$ with respect to $y$ and $j$. The paths $\{P_{x,y} \}_{(x,y) \in M}$ are all vertex disjoint on $V^*$ since the trees $\{T_{K}(u) \}_{u \in \Psi_2}$ are all vertex disjoint. Each path in $P_{i,j}$ has length $l:=2k+5$  where the $k$ is given by the event $\mathcal{A}_{i,j}$. Thus conditional on the event $\mathcal{A}_{i,j}$ we have 
	\begin{equation} \label{pathsA} paths_2(i,j,l) \geq \left| \{P_{x,y} \}_{(x,y) \in M}\right| = \min\left\{\psi_2(i), \psi_2(j) \right\}.\end{equation}  Exchanging the $\psi_2$ and ${d}_2$ distributions on the event $\{\psi_2(i) \neq {d}_2(i) \text{ or } \psi_2(j) \neq {d}_2(j) \}$ yields
	\begin{align*}
	\mathfrak{P}&:=\pr{paths_2(i,j,l)\neq  \min\{{d}_2(i),{d}_2(j)\}} \leq \pr{\psi_2(i) \neq {d}_2(i) \text{ or } \psi_2(j) \neq {d}_2(j) } \\ &\qquad+\pr{paths_2(i,j,l)<  \min\{\psi_2(i),\psi_2(j)\}} +\pr{\left\{paths_2(i,j,l)>  \min\{{d}_2(i),{d}_2(j)\}\right\}}.\intertext{Now by \eqref{pathsA} and \eqref{pathsE} we have the following}  \mathfrak{P}&\leq \pr{\psi_2(i) \neq {d}_2(i) \text{ or } \psi_2(j) \neq {d}_2(j) }  +  \pr{( \mathcal{A}_{i,j})^c} + \pr{\ce_{i,j}^c} \leq 5n^3p^4 + o\left(e^{-7\min \left\{np,\log n \right\}/2}\right), 
	\end{align*}by Lemma \ref{sdom} \eqref{itm:d4}, Lemma \ref{setlem} and \eqref{E} respectively. On the event $\mathcal{A}_{i,j}$ the strong path property is satisfied for some $k\leq \lfloor\tfrac{\log n}{2\log (np)}\rfloor +2$, thus  $l=2k+5 \leq \tfrac{\log n}{\log (np)} +9$.\medskip 	
	
	\noindent\textit{Item \eqref{itm:bolthom2}}: observe that ${d}_2(u)\sim Bin\left(n-1-{d}(u), 1-(1-p)^{{d}(u)}\right)$, conditional on ${d}(u)$ for any $u \in V$. Notice that $(1-p)^k \leq 1- kp + (kp)^2$ when $(kp)^i\geq (kp)^{i+1}$ for all $i$ by the Bernoulli inequality \eqref{bern}. Thus conditional on $\ce(\{i,j\},3\log(np))$, see \eqref{eventelam}, we have the following
	\begin{align*}
	Bin\left(n-2np, np^2 - 2p\sqrt{\log (np)np}\right)\preceq_1 {d}_2(i),{d}_2(j) \preceq_1 Bin\left(n, np^2 + p\sqrt{3\log (np)np}\right).  
	\end{align*}
	Let $\mathcal{R}_{i,j}$ be the event $\left\{ \left|\min\{{d}_2(i),{d}_2(j)\}-(np)^2\right|\leq  3(np)^{3/2}\sqrt{\log np}\right\}$. Observe that we have  
	\begin{align}\label{rrrtag}
	\pr{\mathcal{R}_{i,j}^c}\leq \pr{\mathcal{R}_{i,j}^c\big|\ce(\{i,j\},3\log(np))}  +\pr{\ce(\{i,j\},3\log(np))^c } = o\left(1/np\right),
	\end{align}by \eqref{eventelam} and applying  Chernoff bounds to ${d}_2(i)$ conditional on $\ce(\{i,j\},3\log(np))$. 	We now have 
	\begin{align*}& \pr{\left|paths_2(i,j,l) -(np)^2\right|> 3(np)^{3/2}\sqrt{\log np}} \leq  \pr{paths_2(i,j,l) \neq  \min\{{d}_2(i),{d}_2(j) \}}+\pr{\mathcal{R}_{i,j}^c} \\ &\qquad \leq 5n^3p^4 + o\left(e^{-7\min \left\{np,\log n \right\}/2}\right) + o\left(1/np\right)= o\left(1/np\right),    \end{align*} by Item \eqref{itm:bolthom1} and the bound on $\pr{\mathcal{R}_{i,j}^c}$ from \eqref{rrrtag}. \end{proof}

\section{Proof of Theorems \ref{exthm}, \ref{concentrationnew}, \ref{exthm2} \& \ref{concentration}}\label{FinalSec}
 Recall $\varepsilon_n : = \frac{\log  n}{ np\log (np)}$ from \eqref{edef}, that $m=|E|$ and Tetali's formula \eqref{eq:hit}, which is given by \begin{align*}
 h(i, j) &=  m R(i, j) + \sum\limits_{u\in V} \frac{{d}(u)}{2}\left[R(j,u)-R(u, i)\right].
 \end{align*}

Our results on hitting times and other random walk indices come from applying our bounds on resistance to Tetali's formula \eqref{eq:hit} to obtain moments hitting times. The following two Lemmas help us calculate the terms arising during these computations.
\begin{lemma}\label{ubredux}Let $\mathcal{G}\sim  \mathcal{G}(n,p)$ where $\log n + \log\log\log n \leq np < o(n^{1/3})$. Let $\alpha\geq 1$ and $\Psi_1(u),\Psi_1(v)$ be defined with respect to $\mbfs(\mathcal{G},\{u,v\}), \; u,v \in V$. Then
	\[ \mathbb{E}_\mathcal{C}\left[\frac{\mathbf{1}_{\mathcal{B}_{u}^{u,v}}}{\psi(u)^\alpha} \right]^{1/\alpha}= \frac{ 1 + \BO{\eps_n}}{np}.\]
\end{lemma}

\begin{proof}We restrict to the event $\cb_{u}^{u,v}$ to ensure the expectation is bounded, 
	\begin{align*}
	\mathfrak{E}&:= \mathbb{E}_\mathcal{C}\left[ \frac{\mathbf{1}_{\mathcal{B}_{u}^{u,v}}}{\psi(u)^\alpha}\right]= \sum\limits_{k=1}^{n}  \frac{1}{k^\alpha}\pru{\mathcal{C}}{\psi(u) =k } = \sum\limits_{k=1}^{n}  \frac{1}{k^\alpha}\frac{\pr{\{\psi(u) =k \}\cap \mathcal{C}}}{\pr{\mathcal{C}}}. \notag \intertext{Applying the coupling inequality \eqref{prk}, and then Lemma \ref{sdom} to bound $\pr{{d}^*(u) \neq \psi(u)}$ gives} \mathfrak{E} &\leq \sum\limits_{k=1}^{n}  \frac{1}{k^\alpha}\frac{\pr{{d}^*(u) =k} +\pr{{d}^*(u) \neq \psi(u)} }{\pr{\mathcal{C}}} =\sum\limits_{k=1}^{n}  \frac{1}{k^\alpha}\frac{\pr{{d}^*(u) =k} +e^{-(1-o(1))np} }{\pr{\mathcal{C}}}.\notag \intertext{ Let $\tilde{{d}}_1(v):= \left|\Gamma_1(v) \cap S_0  \right| \sim  Bin(n-2,p)$. By Lemma \ref{dom}  we have ${d}^*(u) \sim  Bin(n-2-h,p)$ conditional on $\{\tilde{{d}}_1(v)=h \}$. By the law of total expectation and the generalised harmonic series, }
	\mathfrak{E}&\leq \sum\limits_{k=1}^{n}  \frac{1}{k^\alpha}\frac{\sum\limits_{h=0}^{n-2}\pr{{d}^*(u) =k\Big|\tilde{{d}}_1(v) =h}\pr{\tilde{{d}}_1(v) =h}  }{\pr{\mathcal{C}}} +  O\!\left(\frac{(\log n )e^{-(1-o(1))np} }{\pr{\mathcal{C}}}\right).\notag\intertext{Now by writing out $\pr{{d}^*(u) =k\Big|\tilde{{d}}_1(v) =h}\pr{\tilde{{d}}_1(v) =h}$ explicitly we have  }
	\mathfrak{E}&\leq \sum\limits_{k=1}^{n}\frac{1}{k^\alpha} \frac{\sum\limits_{h=0}^{n-2} \binom{n-2-h}{k}p^k(1-p)^{n-2-h-k}\cdot\binom{n-2}{h}p^h(1-p)^{n-2-h}}{\pr{\mathcal{C}}}+  e^{-(1-o(1))np}\notag\\
	&= \sum\limits_{h=0}^{n-3}\binom{n-2}{h}\frac{p^h(1-p)^{n-2-h}}{\pr{\mathcal{C}}} \left(\sum\limits_{k=1}^{n-2-h}  \frac{1}{k^\alpha}\binom{n-2-h}{k}p^k(1-p)^{n-2-h-k}\right) +  e^{-(1-o(1))np}.\notag
	\intertext{Applying Proposition \ref{quo} to the bracketed sum above where we let $B_h$ be a random variable with distribution $Bin(n-h-3,p)$ yields}
	\mathfrak{E}&\leq \frac{np }{\pr{\mathcal{C}}} \sum\limits_{h=0}^{n-3}\binom{n-2}{h}p^h(1-p)^{n-2-h}\ex{\frac{1}{\left(B_h+1\right)^{\alpha+1}}}+e^{-(1-o(1))np}.\notag
	\intertext{The weight in front of the expectation term is the density of a $Bin(n-2,p)$ random variable. Split the sum at $t:=\sqrt{3np(\alpha+2)\log(np)}$ and bound the expectation to give}
	\mathfrak{E}&\leq \frac{np }{\pr{\mathcal{C}}} \left(\pr{Bin(n-2,p)\leq t}\ex{\frac{1}{\left(B_t+1\right)^{\alpha+1}}}+ \pr{Bin(n-2,p)> t}\right)+e^{-(1-o(1))np}.\notag\intertext{Bounding $\pr{Bin(n-2,p)> t}$ by Lemma \ref{chb} using Lemma \ref{bmoment} to calculate $\ex{\frac{1}{\left(B_t+1\right)^{\alpha+1}}}$:}
	\mathfrak{E}&\leq \frac{np }{\pr{\mathcal{C}}} \left[ \left(\frac{1}{\left((n-t-3)p\right)^{\alpha+1}} + O\!\left(\frac{1}{\left((n-t-3)p\right)^{\alpha+2}}\right) \right) +  o\left(\frac{1}{(np)^{\alpha+2}}  \right)\right]+ e^{-(1-o(1))np}\notag\\
	&=\frac{1}{\pr{\mathcal{C}}}\left(\frac{1}{(np)^\alpha} +  O\!\left(\frac{1}{(np)^{\alpha+1}} \right) \right) 
	\end{align*}Applying Bernoulli's inequality \eqref{bern} yields
	\begin{align*}
	\mathfrak{E}^{1/\alpha} &\leq \frac{1}{\pr{\mathcal{C}}}\left(\frac{1}{(np)^\alpha} +  O\!\left(\frac{1}{(np)^{\alpha+1}} \right) \right)^{1/\alpha} = \frac{1}{1-\pr{\mathcal{C}^c} np}\left(1+ O\!\left(\frac{1}{np } \right)\right)^{1/\alpha}= \frac{ 1 + \BO{\eps_n}}{np},	\end{align*}
	as \eqref{eq:probC} gives  $\pr{\cc^c}\leq O\!\left(\epn \right) $ whenever $np \geq \log n + \log\log\log n$.
\end{proof}

\begin{lemma}\label{keylem}
For any set $A\subset V$ of size $0\leq a\leq 3$ and any set of vertex pairs $B\subset \binom{V}{2}$ of size $0\leq b\leq 3$ then 
\[\exu{\mathcal{C}}{\left(\prod_{v \in A}{d}(v) \right) \left(\prod_{\{x,y \} \in B}R(x,y)\right)} =\frac{2^b}{(np)^{a-b}}\left(1 \pm  \BO{\epn} \right).    \]  
\end{lemma}

\begin{proof}
We shall prove the case $A= \{u,v,w \}$ and $B=\{(a_1,a_2),(b_1,b_2),(c_1,c_1)  \}  $, this is the ``largest'' case and the other cases are proved in exactly the same way. Let  \[\ce :=\mathcal{A}_{a_1,a_2}^{n}\cap\mathcal{A}_{b_1,b_2}^{n}\cap\mathcal{A}_{c_1,c_2}^{n} \cap\mathcal{B}_{a_1,a_2}\cap\mathcal{B}_{b_1,b_2}\cap\mathcal{B}_{c_1,c_2}.
\]
For ease of notation we define
\[ \mathsf{Deg}(A) := \prod_{v \in A}{d}(v) \qquad \text{and} \qquad \mathsf{Res}(B);= \prod_{\{x,y \} \in B}R(x,y).\] 
Recall the bound on $R(x,y)$ from Corollary \ref{Gnpbdd}, conditional on $\ca_{x,y}$, this yields \begin{align}
\mathsf{Res}(B)\mathbf{1}_{\ce}&\leq \prod\limits_{\{x,y\} \in B}\left( \!\frac{1}{\psi(x)} + \frac{1}{\psi(y)} + \frac{k+2}{\psi(x)}\sup_{a\in \Psi_1(x)}\frac{1}{\varphi(a)}  +  \frac{k+2}{\psi(y)}\sup_{b\in \Psi_1(y)}\frac{1}{\varphi(b)} \right)\mathbf{1}_{\ce}\label{combo} \\
&=\sum_{\substack{x\in\{a_1,a_2\}\\ y\in\{b_1,b_2\}\\ z\in \{c_1,c_2 \}}  }\left(a_{x,y,z} + \sum\limits_{\substack{f,g,h \in\\ \{x,y,z \}\\ f \neq g\neq h }}\left[ (k+2)\cdot b_{f,g,h} +  (k+2)^2\cdot c_{f,g,h}\right] +  (k+2)^3\cdot d_{x,y,z}\right) \notag
\end{align}
where the summands are given by  
\[\begin{alignedat}{2}
a_{x,y,z} & = \frac{\mathbf{1}_{\ce}}{\psi(x)\psi(y)\psi(z)}, \qquad &&b_{f,g,h}= \frac{\mathbf{1}_{\ce}}{\psi(f)\psi(g)\psi(h)}\sup_{a\in \Psi_1(h)}\frac{1}{\varphi(a)},\\
c_{f,g,h}& = \frac{\mathbf{1}_{\ce}}{\psi(f)}\prod_{w\in \{g,h \}}\frac{1}{\psi(w)}\sup_{a\in \Psi_1(w)}\frac{1}{\varphi(a)},  \qquad \qquad &&d_{x,y,z}= \mathbf{1}_{\ce}\prod_{w\in \{f,g,h \}}\frac{1}{\psi(w)}\sup_{a\in \Psi_1(w)}\frac{1}{\varphi(a)}. 
\end{alignedat}\]

By  H\"older's inequality \eqref{holder}, it follows that  $\exu{\cc}{\mathsf{Deg}(A) \cdot  a_{x,y,z}\cdot  \mathbf{1}_{\ce} }$ is at most \begin{align*}
& \exu{\cc}{{d}(u)^6}^{\frac{1}{6}}  \exu{\cc}{{d}(v)^6}^{\frac{1}{6}} \exu{\cc}{{d}(w)^6}^{\frac{1}{6}} \exu{\cc}{\frac{\mathbf{1}_{\ce}}{\psi(x)^6}}^{\frac{1}{6}}\exu{\cc}{\frac{\mathbf{1}_{\ce}}{\psi(y)^6}}^{\frac{1}{6}}\exu{\cc}{\frac{\mathbf{1}_{\ce}}{\psi(z)^6}}^{\frac{1}{6}}\\
&\qquad= \left((np)^6 + \BO{(np)^5} \right)^{\frac{1}{2}}\cdot \left(\frac{1+\BO{\epn}}{np}\right)^3 = 1 + \BO{\epn} ,
\end{align*}
where we applied \eqref{dmom} and Lemma \ref{ubredux} to the expectations, then Bernoulli's inequality \eqref{bern}. 

Similarly by H\"older's inequality \eqref{holder} and collecting similar terms
\begin{align*}
\exu{\cc}{\mathsf{Deg}(A) \cdot  b_{f,g,h}\cdot  \mathbf{1}_{\ce} } &\leq \exu{\cc}{{d}(u)^7}^{\frac{3}{7}}\exu{\cc}{\frac{\mathbf{1}_{\cc_1}}{\psi(f)^7}}^{\frac{3}{7}} \exu{\cc}{\sup_{c\in \Psi_1(h)}\frac{\mathbf{1}_{\cc_1}}{\varphi(c)^7}}^{\frac{1}{7}} \\
&=  \left((np)^7 + \BO{(np)^6} \right)^{\frac{3}{7}}\cdot \left(\frac{1+\BO{\epn}}{np}\right)^3 \cdot \BO{\frac{1}{np}}= \BO{\frac{1}{np}}.
\end{align*} 
where in addition we applied Lemma \ref{supbddlem}. By a near identical calculation we have
\begin{align*}
\exu{\cc}{\mathsf{Deg}(A) \cdot  c_{f,g,h}\cdot  \mathbf{1}_{\ce} } & = \BO{\frac{1}{(np)^2}}, \qquad 
\exu{\cc}{\mathsf{Deg}(A) \cdot  d_{x,y,z}\cdot  \mathbf{1}_{\ce} }  = \BO{\frac{1}{(np)^3}}.
\end{align*}
Now by linearity of expectation, \eqref{combo}, and  since $k= \BO{\log (n)/\log (np) } $, we have 
\begin{equation}\label{emain}\exu{\cc}{\mathsf{Deg}(A) \cdot \mathsf{Res}(B)\cdot  \mathbf{1}_{\ce} } = 2^3 + \BO{\epn} + \BO{\frac{k}{np} }  + \BO{\frac{k^2}{(np)^2} }  + \BO{\frac{k^3}{(np)^3} }= 2^3 + \BO{\epn}. \end{equation}

We shall now consider what happens on $\ce^c$, let $ \mathcal{M}$ be the event $ \bigcap_{u \in A}\left\{ {d}(u)\leq 8np\right\}$. By Chernoff bounds Lemma \ref{chb} and the bound \eqref{eq:probC} on $\pr{\cc}$ we have $\mathbb{P}_{\mathcal{\cc}}\left(\mathcal{M}^c\right)= o(1/n^{7}).$ Let $\mathcal{S}_{i,j}$ be the event $\left\{R(i,j) \leq  3\log n /\log(np)\right\}$ and recall $\pru{\cc}{\mathcal{S}_{i,j}^c}= o(n^{-4}) $ by  Lemma \ref{basicres}.

Observe that conditional on $\tilde{\ce}_1= \ce^c\cap\mathcal{M}\cap\prod_{\{x,y \} \in B}\mathcal{S}_{x,y}$ the following inequalities hold for all $v \in A$ and $\{x,y \}\in B$: ${d}(u) \leq 8np $ and  $R(x,y)\leq 3\log (n )/\log(np) $. Thus
\begin{equation}\label{em1}\exu{\mathcal{C}}{\mathsf{Deg}(A) \cdot \mathsf{Res}(B)\cdot  \mathbf{1}_{\tilde{\ce}_1 }} = \BO{(np)^3\cdot \frac{(\log n)^3}{\log(np)} }\cdot \pru{\cc}{\ce^c}  = o\left(1/n^{4/5}\right). \end{equation}
We shall now consider conditioning on the event $\tilde{\ce}_2= \ce^c\cap\mathcal{M}\cap\left(\prod_{\{x,y \} \in B}\mathcal{S}_{x,y}\right)^c$ where we instead use the worse case resistance bound $R(i,j)\leq n-1 $, this gives 
\begin{equation}\label{ec2}\exu{\mathcal{C}}{\mathsf{Deg}(A) \cdot \mathsf{Res}(B)\cdot  \mathbf{1}_{\tilde{\ce}_2 }} = \BO{(np)^3\cdot n^3}\cdot \pru{\cc}{\mathcal{S}_{x,y}^c}  = o\left(1/n^{4/5}\right). \end{equation} 
Finally we consider the event $\ce^c \cap \mathcal{M}^c$ and we observe that since $\mathbb{P}_{\mathcal{\cc}}\left(\mathcal{M}^c\right)= o(1/n^{7})$ we have  
\begin{equation}\label{ecmc}
\exu{\mathcal{C}}{\mathsf{Deg}(A) \cdot \mathsf{Res}(B)\cdot  \mathbf{1}_{\ce^c \cap \cm^c }} = \BO{n^6}\cdot \pru{\cc}{\mathcal{M}^c}  = o\left(1/n\right). \end{equation} 

The upper bound on $\exu{\mathcal{C}}{\mathsf{Deg}(A) \cdot \mathsf{Res}(B)}$ follows by combining \eqref{emain}, \eqref{em1}, \eqref{ec2} and \eqref{ecmc}.\medskip

We now consider the lower bound. Recall that $B=\{(a_1,a_2),(b_1,b_2) (c_1,c_2) \} $, in the case we are considering. Lemma \ref{lowjoh} states $R(x,y)\geq 1/({d}(x)+1)+ 1/({d}(y)+1)$, thus  
\begin{align}&\exu{\mathcal{C}}{\mathsf{Deg}(A) \cdot \mathsf{Res}(B)}\geq\sum_{
x,y,z}\exu{\mathcal{C}}{\frac{ \prod_{u\in A}{d}(u)}{\left( {d}(x)+1\right)\left( {d}(y)+1\right)\left( {d}(z)+1\right)}} \label{sumlowbdd},
\end{align}where the sum is over $(x,y,z)\in \{a_1,a_2 \}\times  \{b_1,b_2 \}\times \{c_1,c_2 \}$.  Let $ \mathcal{D}$ be the event given by
\[ \mathcal{D}:=\left( \prod_{u\in A} \left\{{d}(u) \geq np -a\sqrt{np}\right\}\right)\bigcap\left( \prod_{\{x,y\}\in B  }\left\{{d}(x) ,{d}(y) \leq np +a\sqrt{np} \right\}\right),\]
 where $a= 3\sqrt{\log\log n}$ if $np = O(\log n)$ and $a=3\sqrt{\log n}$ if $np = \omega(\log n)$. Then, 
\begin{align*}\exu{\mathcal{C}}{\frac{ \prod_{u\in A}{d}(u)}{\left( {d}(x)+1\right)\left( {d}(y)+1\right)\left( {d}(z)+1\right)}} & \geq \frac{\left(np -a\sqrt{np}\right)^3}{\left(np+a\sqrt{np}\right)^3}\pru{\cc}{\cd} =1 - O\!\left(\epn \right),
\end{align*}where the bound on $\pru{\cc}{\cd}$ is by Lemma \ref{chb}. The lower bound follows from \eqref{sumlowbdd}.\end{proof}

\subsection{Proof of Theorem \ref{exthm}}\label{PFEXT}
Equipped with Lemma \ref{keylem}, the proofs of the main ``moment theorems'' are straightforward. 

\begin{proof}[Proof of Theorem \ref{exthm}] Observe that $\exu{\cc}{R(i,j)}=\left(2\pm O\!\left(\varepsilon_n \right)\right)/np$
follows directly from Lemma \ref{keylem} with $A=\emptyset$ and $B=\{(i,j) \}$. For hitting times  we have the following by \eqref{eq:hit}:
\begin{align*}
\exu{\mathcal{C}}{h(i,j)}&= \exu{\mathcal{C}}{mR(i,j)}+ \frac{1}{2}\sum\limits_{u\in V} \left(\mathbb{E}_{\mathcal{C}} \!\left[ d(u)R(u, j)\right] -\mathbb{E}_{\mathcal{C}} \!\left[ d(u) R(u, i)\right]\right)=\exu{\mathcal{C}}{mR(i,j)}, 
\end{align*} when $i\neq j$, by symmetry. Thus, we have 
\[	\exu{\mathcal{C}}{h(i,j)} = \frac{1}{2}\sum_{u\in V} \exu{\mathcal{C}}{{d}(u)R(i,j)} = \frac{1}{2}\sum_{u\in V} (2\pm\BO{\eps_n}) =n\left(1\pm O\!\left(\varepsilon_n \right)\right), \] by Lemma \ref{keylem} with $A=\{u\}$ and $B=\{(i,j) \}$. \end{proof}

\subsection{Proof of Theorem \ref{exthm2}}   \label{PFEXT2}

Theorems \ref{concentrationnew} \& \ref{concentration} shall be proved by Chebychev's inequality, thus we need second moments.
\begin{lemma}\label{h^2}
	Let $\cg \sim \cg(,n,p)$ satisfy \eqref{sparsecon} and $i,j \in V(\cg)$ where $i\neq j$. Then $\exu{\cc}{h(i,j)^2}=\left(1\pm O\!\left(\varepsilon_n \right)\right)n^2,$ $\exu{\cc}{cc_i(\cg)^2}=\left(1\pm O\!\left(\varepsilon_n \right)\right)n^2$ and $\exu{\cc}{K(\mathcal{G})^2}=\left(1\pm O\!\left(\varepsilon_n \right)\right)n^2/p^2.$
\end{lemma}
\begin{proof} Let $g(a,b,c,d):=\exu{\mathcal{C}}{{d}(u){d}(v)R(a,b)R(c,d)}$. Using Tetali's formula \eqref{eq:hit} we can expand $\exu{\mathcal{C}}{  h(i, j)h(i,a)}$ to give the following for any $i,j,a \in V$:
	
	\begin{align}&\mathbb{E}_\mathcal{C}\left[ \left(\sum\limits_{u\in V\phantom{v}} \frac{{d}(u)}{2}\left( R(i,j) + R(j,u) - R(u,i)\right)\!\right) \cdot\left(\sum\limits_{v\in V} \frac{{d}(v)}{2}\left( R(i,a) + R(a,v) - R(v,i)\right)\right)\right] \label{eq:hittet}\notag \\
	&=\frac{1}{4} \sum\limits_{u,v \in V}\!\left( g(i,j,i,a)  +  \sum\limits_{\substack{(w,z) \in \\ \{(u,i),(j,a)\}}} \!\!\!g(i,w,v,z)  - \sum\limits_{w \in \{i,u\}} g(w,j,i,v)\right)\notag \\& \quad+  \frac{1}{4}\!\sum\limits_{u,v \in V}\sum\limits_{w \in \{i,v\}}\!\!\left(g(u,j,w,a)- g(w,a,i,u)\right)= \frac{1}{4} \!\sum_{u,v \in V}\!\! \exu{\mathcal{C}}{{d}(u){d}(v)R(i,j)R(i,a)}.
	\end{align} 
	To see the above, observe that $R(a,b)R(c,d)=0$ if and only $a=b$ or $c=d$. Thus only the first term, $g(i,j,i,a)$, will always be non-zero. All the other terms contain one or more input from $\{u,v\}$ so will be zero at different times. Of the eight other terms there are two positive and two negative terms containing one of $\{u,v\}$, then two positive and two negative terms containing both $u$ and  $v$ as inputs. Thus by symmetry when the sums are expanded everything apart from the first term $g(i,j,i,a)$ cancels. 
	
	Thus by \eqref{eq:hittet} and  Lemma \ref{keylem} with $A=\{u,v\}$ and $B=\{(i,j), (i,a) \}$ we have 
	\begin{equation}\label{hitsqu}\exu{\mathcal{C}}{  h(i, j)h(i,a)} = \frac{1}{4} \sum_{u,v \in V} (4\pm \BO{\eps_n} )=n^2\left(1\pm O\!\left(\varepsilon_n \right)\right).\end{equation} Now by the definition \eqref{Kirchoff} of $cc_i(G)$ and \eqref{hitsqu} we have,
	\begin{align*}
	\exu{\mathcal{C}}{ cc_i(\mathcal{G})^2}&= \frac{1}{(n-1)^2}\exu{\cc}{\left(\sum\limits_{j \in V}h(i,j)\right)^2}= \frac{1}{(n-1)^2}\sum\limits_{j,k \in V; j,k \neq i}\exu{\mathcal{C}}{ h(i, j)h(i,k)},   
	\end{align*}which is equal to $\left(1\pm O\!\left(\varepsilon_n \right)\right)n^2$. Finally observe that by \eqref{Kirchoff} we have
	\begin{equation*}\exu{\mathcal{C}}{K(\mathcal{G})^2} = \sum\limits_{\{i,j\}\subseteq V }\sum\limits_{\{w,z\}\subseteq V }\exu{\mathcal{C}}{R(i,j)R(w,z)} = \sum\limits_{\{i,j\}\subseteq V }\sum\limits_{\{w,z\}\subseteq V }\frac{4\pm\BO{\eps_n}}{(np)^2} = \frac{n^2}{p^2}\left(1\pm O\!\left(\varepsilon_n \right)\right), \end{equation*} where we applied Lemma \ref{keylem} with $A=\emptyset$ and $B=\{(i,j),(w,z) \}$. \end{proof}

\begin{proof}[Proof of Theorem \ref{exthm2}] Recall \eqref{eq:kem}, the definitions of $H_i(G)$ for $i \in V$ and $T(G)$:
	\[H_i(G) := \sum\limits_{j \in V} \frac{{d}(j)}{2m}  h(j,i),\qquad  T(G) := \sum\limits_{j \in V} \frac{{d}(j)}{2m} h(i,j),\] where $m:=|E|\sim  Bin\left(\binom{n}{2},p\right)$. To begin, let  $m^*\sim  Bin\left(\binom{n}{2}-1,p\right), \;k \in \mathbb{Z},\; k\geq1$.  Proposition \ref{quo}  and the fact that $\mathcal{C} \subset \{m \geq 1\}$ yields the following
	\begin{align*} \exu{\mathcal{C}}{\frac{1}{m^k}}\pr{\mathcal{C}}& = \ex{\frac{\mathbf{1}_{\mathcal{C}}}{m^k}} \leq \ex{\frac{\mathbf{1}_{\{m \geq 1\}}}{m^k}} = \ex{\frac{\binom{n}{2}p}{(m^*+1)^{k+1}}}= \frac{2^k}{n^{2k}p^k}\left(1 + \BO{\frac{1}{np}} \right),\end{align*}where in the last step we used Lemma \ref{bmoment}  to bound the expectation term. Observe that by \eqref{eq:probC}, $\pr{\cc^c}\leq O\left(\epn   \right) $ whenever $np \geq \log n + \log\log\log n$. Thus by the Bernoulli inequality \eqref{bern} for any given $a,k\in \mathbb{Z},\; a,k \geq 1$ we have  
	\begin{align}\label{eq:mcalc} \exu{\mathcal{C}}{\frac{1}{m^k}}^{1/a}&= \frac{2^{k/a}}{n^{2k/a}p^{k/a}}\left(1 + \frac{\pr{\mathcal{C}^c }}{\pr{\mathcal{C} }}+ \BO{\frac{1}{np}} \right)^{1/a}\leq \frac{2^{k/a}}{n^{2k/a}p^{k/a}}\pepn. \end{align}
	Using  H\"older's inequality to break the product of random variables in the expectation:
	\begin{align*}
	\exu{\mathcal{C}}{T(\mathcal{G})} &\leq (1/2)\sum\limits_{j \in V} \exu{\mathcal{C}}{{d}(j)^4}^{1/4} \exu{\mathcal{C}}{1/m^{4}}^{1/4} \exu{\mathcal{C}}{ h(i,j)^2}^{1/2}.\notag\intertext{Then applying \eqref{dmom}, \eqref{eq:mcalc} and the upper bound on $\exu{\mathcal{C}}{ h(i,j)^2}$ from Lemma \ref{h^2} yields}
	\exu{\mathcal{C}}{T(\mathcal{G})} &\leq (n/2)\left((np)^4+O\left((np)^3\right)\right)^{1/4}\cdot \left[(2+\BO{\epn})/n^2p\right]\cdot n\pepn= n \pepn.\end{align*} The same upper bounds for $\exu{\mathcal{C}}{H_i(\mathcal{G})}$ follows by identical steps. By \eqref{eq:hit} we have \begin{align*}
	T(G) &= \sum\limits_{j \in V} \frac{{d}(j)}{2m}  \left(mR(i, j) + \sum\limits_{u\in V} \frac{{d}(u)}{2}\left[R(u, j)-R(u, i)\right]\right) \intertext{for $G$ connected. Applying the effective resistance bound, Lemma \ref{lowjoh}, and reducing yields}
	T(G)  &\geq \frac{m}{({d}(i)+1)} -\frac{{d}(i)}{2({d}(i)+1)} +\!\!\sum\limits_{\substack{j \in V\\j\neq i} } \frac{{d}(j)}{2({d}(j)+1)} +\!\! \!\! \sum\limits_{\substack{j,u \in V\\ j\neq u}}\!\!\frac{{d}(j){d}(u)}{2m({d}(u)+1)}-\!\!\!\!\sum\limits_{j,u\in V}\! \!\frac{{d}(u){d}(j)}{4m}R(u, i) . \intertext{Applying ${d}(i)/({d}(i)+1)= 1 - 1/({d}(i)+1)$ and the bound ${d}(i)/({d}(i)+1) \leq 1$ yields}
	T(G) &\geq \frac{m}{{d}(i)+1} +\frac{3n}{2}-2-  \sum\limits_{u \in V}\frac{3}{2({d}(u)+1)}-\sum\limits_{u \in V}\frac{{d}(u)}{2}R(u,i).
	\end{align*}Again by a similar procedure we have the following for the stationary hitting time $H_i(G)$
	\begin{align*}H_i(G)&= \sum\limits_{j \in V} \frac{{d}(j)}{2m}   \left(mR( j,i) + \sum\limits_{u\in V} \frac{{d}(u)}{2}\left[R(u, i)-R(u, j)\right]\right)\geq \frac{n-1}{2} -\sum\limits_{j \in V} \frac{1}{2({d}(j)+1)} \\&+\frac{m-1}{({d}(i)+1)} -1+  \sum\limits_{\mathclap{u\in V, u\neq i}} \frac{{d}(u)}{2}\!\left(\!\frac{1}{{d}(i)+1}+\frac{1}{{d}(u)+1}\!\right)-\!\sum\limits_{j,u\in V} \!\frac{{d}(u){d}(j)}{4m}R(u, j) 
	\\&\geq n +\frac{2m-2}{{d}(i)+1} - \sum\limits_{u \in V} \frac{1}{{d}(u)+1} -\frac{7}{2} -\sum\limits_{\mathclap{j,u \in V}}\frac{{d}(u){d}(j)}{4m}R(u,j) .\end{align*}Let $ \mathcal{D}$ be the event $\{m \geq n^2p/2 -a\sqrt{n^2p/2}\}\cap \{{d}(j) \leq np +a\sqrt{np} \}$ where $a= 3\sqrt{\log\log n}$ if $np = O(\log n)$ and $a=3\sqrt{\log n}$ if $np = \omega(\log n)$. Now by Lemma \ref{chb} we obtain\[\pru{\cc}{\cd}  = \left(1- \exp\left(-a^2/2\right)/\pr{\cc} -\exp\left(-a^2/2(1+a/3\sqrt{np})\right)/\pr{\cc}  \right) = 1- o\left(1/np\right). \] By H\"older's inequality \eqref{holder}, $1\geq \mathbf{1}_{\cd}$ and the bound on $\pru{\cc}{\cd}$ in the line above we have
	\begin{align*}
	\exu{\mathcal{C}}{H_i(\mathcal{G})} &\geq n +  2\frac{\binom{n}{2}p -a\sqrt{\binom{n}{2}p}-1}{np +a\sqrt{np}+1}\pru{\cc}{\cd}  -n\cdot\exu{\mathcal{C}}{\frac{1}{{d}(u)+1}}-\frac{7}{2}\\ &\quad - (n/4)\exu{\mathcal{C}}{{d}(j)^4}^{1/4}\exu{\mathcal{C}}{1/m^4}^{1/4} \exu{\mathcal{C}}{{d}(u)^2R(u, j)^2}^{1/2} = n\mepn .\intertext{The last equality comes from applying estimates to the expectation terms which are given by  Lemma \ref{bmoment} in Appendix \ref{appen} and \eqref{dmom}, \eqref{eq:mcalc}, and Lemma \ref{keylem} respectively. Similarly we have}
	\exu{\mathcal{C}}{T(\mathcal{G})} &\geq   \frac{\binom{n}{2}p -2a\sqrt{\binom{n}{2}p}}{np +a\sqrt{np}}\pru{\cc}{\cd} +\frac{3n}{2} -2  -\frac{3n}{2}\exu{\mathcal{C}}{\frac{1}{{d}(u)+1}}- \frac{n}{2} \exu{\mathcal{C}}{{d}(u)^2R(u, i)^2}^{1/2},
	\end{align*}which also evaluates to $n\mepn$. \end{proof}

\subsection{Proof of Theorems \ref{concentrationnew} \& \ref{concentration}}  
\begin{lemma}
	Let $\cg \sim \cg(,n,p)$ satisfy \eqref{sparsecon}. Then $\exu{\cc}{H_i(\mathcal{G})^2},\exu{\cc}{T(\mathcal{G})^2}=n^2\left(1\pm O\!\left(\varepsilon_n \right)\right)$.
\end{lemma}
\begin{proof}We will first bound $ \exu{\mathcal{C}}{  h(i, j)^3}$ from above. Now similarly to Lemma \ref{h^2}, \begin{align}\label{eq:upexpbdd2}
	\exu{\mathcal{C}}{  h(i, j)^3}&= \frac{1}{8} \sum_{x,y,z \in V} \exu{\mathcal{C}}{{d}(x){d}(y){d}(z)R(i,j)^3} = \frac{1}{8} \sum_{x,y,z \in V} (8\pm \BO{\eps_n} ),
	\end{align} which equals $n^3\left(1\pm O\!\left(\varepsilon_n \right)\right)$ - where above we applied Tetali's formula \eqref{eq:hit}, cancelled terms by symmetry and then applied Lemma \ref{keylem} with $A=\{x,y,z\}$ and the multi-set $B=\{(i,j), (i,j),(i,j) \}$. 
	
	By the definition \eqref{eq:kem} of $T(\cg)$ and H\"{o}lder's inequality \eqref{holder} we have
	\begin{align*}
		\exu{\cc}{T(\cg)^2} &= \exu{\cc}{\left(\sum\limits_{j \in V}\frac{{d}(j)}{2m} h(i,j)\right)^{2}}= \exu{\cc}{\sum\limits_{j,k \in V} \frac{{d}(j){d}(k)}{(2m)^2}  h(i,j)h(x,y)}\\
		&\leq \sum\limits_{j,k\in V}\left(\exu{\cc}{{d}(j)^9}\exu{\cc}{{d}(k)^9}\exu{\cc}{1/(2m)^{18} }\right)^{1/9}\left(\exu{\cc}{ h(i,j)^3}\exu{\cc}{h(x,y)^3}\right)^{1/3}.
		\intertext{Applying the bounds \eqref{dmom}, \eqref{eq:mcalc} and \eqref{eq:upexpbdd2} respectively then Bernoulli's inequality \eqref{bern} gives}
		\exu{\cc}{T(\cg)^2}  &\leq \frac{n^2}{2^2}\left((np)^{9}+O\!\left((np)^{8}\right)\right)^{\frac{2}{9}} \left(\frac{2^{18}+\BO{\epn}}{n^{36}p^{18}}\right)^{\frac{1}{9}}	\!\left(n^6\!\pepn\right)^{\frac{1}{3}}=n^2\pepn.\end{align*}Then by Jensen's inequality and the lower bound on $\exu{\cc}{T(\cg)}$ proved earlier we have
	\begin{align*}
		\exu{\cc}{T(\cg)^2} &\geq \exu{\cc}{T(\cg)}^2 \geq \left(n\mepn\right)^2 = n^2\mepn.
	\end{align*}  The exact same calculations yield the same bounds for $\exu{\mathcal{C}}{H_i(\mathcal{G})^2}$.\end{proof}
We prove Theorems \ref{concentrationnew} \& \ref{concentration} (together) by Chebychev's inequality and our moment bounds.
\begin{proof}[Proof of Theorems \ref{concentrationnew} \& \ref{concentration}]
	Let $X \in \{h(i,j), \; H_i(\cg) ,\; T(\cg),\; cc_i\}$ where $i,j \in V$ and recall $\exu{\cc}{\cdot} = \ex{\cdot |\cc}$. We have the following for these $X$ by Theorem \ref{exthm}
	\[\var\left(X\big|\cc\right) = n^2\pepn  - \left( n\pepn \right)^2 = O\!\left(n^2\epn  \right).  \] We can also calculate the conditional variance of $K(\cg)$ by Theorem \ref{exthm}, this yields \begin{align*}\var\left(K(\cg)\big|\cc\right)&= (n^2/p^2)\pepn  - \left( n\pepn /p\right)^2= O\!\left( n\epn/p  \right).\end{align*} By the Chebyshev inequality \cite[Theorem 4.1.1]{alon2008probabilistic} for each of the above
	\begin{align*} \pr{\Big|X - \ex{X\big|\cc}\big| \geq \lambda(n)\sqrt{\var\left(X|\cc\right)}\,\big|\, \mathcal{C} } &\leq 1/\lambda(n)^2.\end{align*}
	For $X$ above we have $\var\left(X|\cc\right)= O\!\left(\ex{X\big|\cc}^2\epn  \right) $ by Theorem \ref{exthm}, thus there exists some $K$ independent of $n$ and $X$ such that $\sqrt{\var\left(X|\cc\right)}<\ex{X\big|\cc}\sqrt{K \epn },$ for large $n$. By choosing $\lambda(n) = \sqrt{f(n)/K}$ for any function $f(n)$ we have
	\begin{align*} \pr{\left|X - \ex{X\big|\cc}\right| > \ex{X\big|\cc}\sqrt{f(n)\epn }\;\Big|\,\cc} &\leq K/f(n) = O\!\left(1/f(n) \right). \end{align*}The result follows since $\pr{\ce} \leq \pr{\ce|\cc} + \pr{\cc^c}$, for any event $\ce$.\end{proof}

\subsection*{Acknowledgements}
I would like to thank my supervisors Agelos Georgakopoulos and David Croydon for their guidance and the reviewers who's comments have greatly improved the paper. This work was largely composed while I was part of the MASDOC DTC at the University of Warwick, supported by EPSRC grant No. EP/HO23364/1 and ERC Starting Grant no.\ 639046 (RGGC). It was completed while I supported by ERC Starting Grant no.\ 679660 (DYNAMIC MARCH).

\bibliographystyle{plain}

\appendix 

\section{Appendix}\label{appen}
We make frequent use of the following inequalities. Bernoulli: Let $x\geq -1$, then
\begin{equation}\label{bern}(1 + x)^r \leq 1 + rx\;\text{    for    }\;0 \leq r \leq 1 \quad  \text{    and    }\quad (1 + x)^r \geq 1 + rx\;\text{    for    }\; r \geq 1  .\end{equation}
H\"{o}lder: For $1\leq k\leq  n$ let $X_k$ be r.v.'s and $p_k\in [1,\infty)$   where $\sum _{k=1}^{n}1/p_{k}=1$ and $\ex{X_k^{p_k}}$ exists, then \begin{equation}\label{holder}
\ex{X_1\cdots X_{n}}\leq \ex{X_{1}^{p_{1}}}^{1/p_1}\cdots \ex{X_{n}^{p_{n}}}^{1/p_n}.
\end{equation}
Coupling: If $X,Y$ are real random variables on a probability space $(\Omega, \mathfrak{F}, \mathbb{P})$, then for any $B\subset \mathbb{R}$,
\begin{equation}\label{prk}
\Big|\pr{X\in B}-\pr{Y \in B}\Big|\leq \pr{X\neq Y}.  
\end{equation} 

\begin{lemma}[Chernoff bounds]\label{chb} If $X\sim  Bin(n,p)$, then for any $a>0$, $b<np$ and $c>np$
	\begin{enumerate}[(i)]
		\item \label{itm:chern1} $\mathbb{P}\left[X < np -a \right] \leq \exp\left(-\frac{a^2}{2np}\right), \quad \text{and}\quad  \mathbb{P}\left[X > np +a \right] \leq \exp\left(-\frac{a^2}{2(np + a/3)}\right),$
		\item \label{itm:chern2} $\mathbb{P}\left[X < b \right] \leq e^{-np} \left(\frac{enp }{b} \right)^b,\qquad \qquad \text{and}\quad \mathbb{P}\left[X > c \right] \leq e^{-np} \left(\frac{enp }{c} \right)^c.$
	\end{enumerate}
\end{lemma} 
\begin{proof}
	For (i) see \cite[Thm.\ 2.4]{Chung} and \cite[Thm.\ 2.15]{Chung} with $ \eps = 1-\frac{b}{np}$ for (ii). 
\end{proof}
We also have the following closed form for moments of binomial random variables,
\begin{theorem}[{\cite[Theorem 4.1]{Knob}}]\label{knobmo} Let $X \sim Bin(n,p)$, $n^{\underline{i}}:=n(n-1)\dots(n-i+1)$ and $S(d,i)$ be the Stirling partition number of $d$ items into $i$ subsets. Then for $d \geq 0$,
	\[\ex{X^d} = \sum\limits_{i=0}^dS(d,i)p^in^{\underline{i}},\qquad\text{where}\qquad S(d,i):= \frac{1}{i!}\sum\limits_{k=0}^i(-1)^{k+i}\binom{i}{k} k^d. \]
	
\end{theorem}
\noindent Let $X\sim Bin(n,p)$, $0<p:=p(n)<1$ and $d\geq 0$ fixed. Then by Theorem \ref{knobmo} we have
\begin{equation}\label{dmom}
\ex{X^d} = S(d,d)p^dn^{\underline{d}} \pm O\left(p^{d-1}n^{\underline{d-1}}\right) =(np)^d \pm O\left((np)^{d-1}\right).
\end{equation} 

\begin{proposition}\label{quo}
	Let $X\sim Bin\left(n,p\right),\;Y \sim  Bin\left(n-1,p\right), \, \alpha \in \mathbb{Z},\, \alpha\geq 1.$ Then \begin{align*}
	\mathbb{E}\left[ \frac{\mathbf{1}_{\{X\geq 1\}}}{X^{\alpha}}\right]&:=\sum\limits_{k=1}^n \frac{1}{k^\alpha} \binom{n}{k}p^k(1-p)^{n-k}= \sum\limits_{k=0}^{n-1} \frac{1}{(k+1)^\alpha} \binom{n}{k+1}p^{k+1}(1-p)^{n-1-k} \notag \\
	& = \sum\limits_{k=0}^{n-1} \frac{np}{(k+1)^{\alpha+1}} \binom{n-1}{k}p^k(1-p)^{(n-1)-k}=\mathbb{E}\left[ \frac{np}{\left(Y+1\right)^{\alpha+1}}\right].
	\end{align*}
\end{proposition}
\begin{lemma}\label{bmoment}
	Let $X_n \sim  Bin(n,p)$ for $p:=p(n)$ with $np \rightarrow \infty$, $a\in \mathbb{R}, \;b\in \mathbb{Z}, \,a,b >0$. Then
	\[\frac{1}{(a+ np)^{b}}  \leq \ex{\frac{1}{(a+ X_n)^{b}}} \leq \frac{1}{(a+ np)^{b}} + O\left(\frac{1}{(np)^{(b+1)} }\right).\]
\end{lemma}

\begin{proof}Let $f(x) :=f_{a,b}(x)= (a+x)^{-b}$ for any constants $a,b >0$. The lower bound follows from Jensen's inequality since $f(x)$ is convex for $a,b >0$. 
	
	Let $\mu_n = \ex{X_n}=np$. When $np \rightarrow \infty$ it is possible to find some $r:=r(n)$ such that $r= \omega(\sqrt{np \log(np)})$ and $ r = o(np)$. The Chernoff bound, Lemma \ref{chb} \eqref{itm:chern1}, then yields
	\[\pr{X_n \leq \mu_n -r} \leq \exp\left(-r^2/2\mu_n \right) = o(1/np).\] With this $r$ we can achieve the following a priori upper bound for any $b\geq 1$:
	\begin{equation}\label{apri}
	\ex{f(X_n)} \leq \frac{1}{a^b}\pr{X_n \leq \mu_n -r} + f(\mu_n-r)\pr{X_n > \mu_n -r} = (1+o(1))f(\mu_n).
	\end{equation}
	By Taylor's theorem there is some $\xi_n$ between $X_n$ and $\mu_n$ such that  
	\[f(X_n) = f(\mu_n) + f'(\mu_n)\left(X_n-\mu_n \right) + f''(\xi_n)\left(X_n-\mu_n \right)^2 . \]Using H\"{o}lder's inequality \eqref{holder} and the fact $f(x)$ is decreasing when $x>0$, we have
	\begin{align}\label{reverse}
	&\left(\ex{f(X_n)} - f(\mu_n)\right)^2 \leq \left(f'(\mu_n)\ex{X_n-\mu_n}+\ex{f''(\xi_n)\left(X_n-\mu_n\right)^2}\right)^2\notag \\
	&\leq \ex{f''(\xi_n)^2}\ex{\left(X_n-\mu_n\right)^4} \leq \ex{f''(X_n)^2\mathbf{1}_{\{X_n\leq \mu_n\}} }\ex{\left(X_n-\mu_n\right)^4}\\
	&\quad +\ex{f''(\mu_n)^2\mathbf{1}_{\{X_n> \mu_n\}} }\ex{\left(X_n-\mu_n\right)^4}\leq (2+o(1))f''(\mu_n)^2 \ex{\left(X_n-\mu_n\right)^4}.\notag
	\end{align}The last inequality follows by \eqref{apri} since $f''(\mu_n) = b\cdot(b+1)\cdot (a+\mu_n)^{-(b+2)}$. Observe 
	\begin{equation}\label{4moment}
	\ex{\left(X_n-\mu_n\right)^4} = np(1-p)\left(3p(n-2) -3p^2(n+2)+1\right) = O((np)^2), 
	\end{equation}this can be calculated using the binomial moment generating function or by Theorem \ref{knobmo}. Hence by \eqref{reverse}, \eqref{4moment} and $(f_{a,b}(x))^{''}=b(b+1)f_{a,(b+2)}(x)$, we have
	\[\ex{f(X_n)} \leq f(\mu_n) + \left( O\left((a+\mu_n)^{-2(b+2)} \right)\cdot O((np)^2) \right)^{1/2} = \frac{1}{(a+np)^b} + O \left(\frac{1}{(np)^{b+1}} \right).  \]\end{proof} 
Finally we shall prove Proposition \ref{tight} which shows tightness for the concentration results. 

\begin{proof}[Proof of Proposition \ref{tight}] Let $X_{d}$ be the number of vertices with degree $d$. For the first case:
	\begin{align*}\ex{X_1}&= n\cdot \binom{n-1}{1}p(1-p)^{n-2} = n^2p e^{-\log n  -O( \log\log\log n )}\geq  \frac{\log n }{(\log\log n)^{O({1})}}.  \end{align*} This implies that, for any fixed $t$, $\lim_{n\rightarrow \infty} \pr{|X_{1}|\geq t} = 1 $ by \cite[Thm 3.1]{bollobasrandom}. Thus w.h.p. there is at least one pair of vertices $i,j$ both with degree $1$ and so $R(i,j)\geq 1$ by Lemma \ref{lowjoh}. Since the number of edges $m$ is distributed $Bin\left(\binom{n}{2},p\right)$ there are $n^2p/2(1-\lo{1})$ edges w.h.p by Lemma \ref{chb}. Thus by the Commute time formula \eqref{commute} we have $\kappa(i,j) = 2m\cdot R(i,j)\geq (1-o(1)) n\log(n)$ and since $\kappa(i,j) = h(i,j) + h(j,i)$ at least one of $h(i,j)$ of $h(j,i)$ is greater than $ n\log(n)/3$ w.h.p.

	For the case $np = (c+o(1))\log(n)$ let $k:k(\varepsilon):= (1-\varepsilon )np$ for some $0< \varepsilon<1$ and observe  
	\begin{align*}
	\ex{X_k} &= n \binom{n-1}{k}p^k ( 1-p)^{n -1 -k  } \geq \frac{n}{\sqrt{2\pi k}} \left(\frac{e}{(1-\varepsilon)} \right)^{(1-\varepsilon)np } e^{-np}(1-o(1))\intertext{Recall $ -\log(1-t) \geq t +t^2/2 $ for $t<1$. In a similar vein to the proof of \cite[Theorem 2.2]{jonasson1998cover}:}
	\ex{X_k} & \geq  \frac{n}{3\sqrt{k}} e^{-\varepsilon np -(1- \varepsilon )\log(1-\varepsilon)np}\geq  \frac{n}{3\sqrt{ k}} e^{-\varepsilon np +(1- \varepsilon )(+\varepsilon +\varepsilon^2/2)np} \geq  \frac{n e^{-\frac{\varepsilon^2(1+\varepsilon)np}{2} }}{3\sqrt{ k}}
	\end{align*}So for any $0<\varepsilon <1$ satisfying $\frac{\varepsilon^2(1+\varepsilon)c}{2}<1 $ we have that $\ex{X_{k}} \rightarrow \infty$ (a concrete example would be $\varepsilon = \sqrt{1/(c+1)}$). Thus again by \cite[Thm 3.1]{bollobasrandom} there are at least two vertices $i,j$ with degree less than $(1-\varepsilon)np$ w.h.p. Thus, as before, $\kappa(i,j) = 2m\cdot R(i,j)\geq n^2p\frac{2}{(1-\varepsilon )np}= \frac{2n}{1-\varepsilon}.$ Thus one or both of  $h(i,j)$ or $h(j,i)$ must be greater than $(1+a)n$ for some $a>\tfrac{\varepsilon}{2(1 -\varepsilon)}>0$ w.h.p.. \end{proof}

\end{document}